\documentclass[12pt,a4paper,reqno]{amsart}  
\title{
  Braided Hopf algebras and  gauge transformations II:  \\ ~ \\
   $*$-structures and examples}

\date{November 2022}
\author{Paolo Aschieri, Giovanni Landi, Chiara Pagani}

\usepackage{amsmath, amssymb, amsthm,bbm} 
\usepackage[english]{babel} 
\usepackage{fullpage}
\usepackage{upgreek}  
\usepackage{enumerate}
\usepackage[all]{xy}
\usepackage{graphicx}
\usepackage{mathtools}
\usepackage{mathrsfs} 
\usepackage{hyperref}
\usepackage{setspace}
\usepackage{graphicx}
\usepackage{float}
\usepackage{dsfont} 
\usepackage{caption}

\numberwithin{equation}{section}

\hyphenation{coquasitriangular coquasi-triangular}




\let\oldtocsection=\tocsection
\let\oldtocsubsection=\tocsubsection
\let\oldtocsubsubsection=\tocsubsubsection
 \renewcommand{\tocsection}[2]{\hspace{0em}\oldtocsection{#1}{#2}}
\renewcommand{\tocsubsection}[2]{\hspace{1em}\oldtocsubsection{#1}{#2}}
\renewcommand{\tocsubsubsection}[2]{\hspace{2em}\oldtocsubsubsection{#1}{#2}}

\address[]{\textit{Paolo Aschieri}  \newline \indent 
Universit{\`a} del Piemonte Orientale, 
\newline \indent 
Dipartimento di Scienze e Innovazione Tecnologica
\newline \indent   viale T.~Michel~11,~15121~Alessandria,~Italy,
\newline \indent  and INFN Torino, via P.~Giuria~1, 10125~Torino,~Italy}
\email{paolo.aschieri@uniupo.it}

\address[]{\textit{Giovanni Landi }
\newline \indent  Universit\`a di Trieste, Via A. Valerio 12/1, 34127 Trieste, Italy,
\newline \indent Institute for Geometry and Physics (IGAP) Trieste, Italy, 
\newline \indent and INFN, Sezione di Trieste, Trieste, Italy. }
\email{landi@units.it}

\address[]{\textit{Chiara Pagani} 
\newline \indent     Universit\`a di Trieste, Via A. Valerio, 12/1, 34127  Trieste, Italy. }
\email{cpagani@units.it}


\theoremstyle{plain}
\newtheorem{thm}{Theorem}[section]
\newtheorem{lem}[thm]{Lemma}
\newtheorem{prop}[thm]{Proposition}

\newtheorem{cor}[thm]{Corollary}

\theoremstyle{definition}
\newtheorem{defi}[thm]{Definition}

\theoremstyle{remark}
\newtheorem{ex}[thm]{Example}

\newtheorem{rem}[thm]{Remark}

\newcommand{\II}{\mathds{1}}
\newcommand{\dd}{\mathcal{D}}

\newcommand{\nn}{\nonumber}
\newcommand{\ot}{\otimes}
\newcommand{\beq}{\begin{equation}}
\newcommand{\eeq}{\end{equation}}

\newcommand{\btrl}{\blacktriangleright}
\newcommand{\trl}{\vartriangleright}

\newcommand{\bbK}{\mathds{k}}
\newcommand{\id}{\mathrm{id}}

\newcommand{\M}{\mathcal{M}}
\renewcommand{\O}{\mathcal{O}}

\newcommand{\IR}{\mathbb{R}}

\newcommand{\IN}{\mathbb{N}}

\newcommand{\IT}{\mathbb{T}}
\newcommand{\IZ}{\mathbb{Z}}

\newcommand{\can}{\chi}


\newcommand{\flip}{\tau}

\renewcommand{\u}{\textsf{u}}

\newcommand{\uf}{\mathsf{u}_\F}
\newcommand{\buf}{\bar{\mathsf{u}}_\F}
\renewcommand{\r}{\mathsf{R}}
\newcommand{\br}{{\overline{\r}}}

\newcommand{\rF}{{\r_\F}}
\newcommand{\brF}{{\br_\F}}

\newcommand{\KF}{{K_\F}}

\newcommand{\cun}{\varepsilon}


\renewcommand{\cot}{\gamma}
\newcommand{\co}[2]{\cot\left({#1}\ot{#2}\right)}
\newcommand{\coin}[2]{\bar\cot\left({#1}\ot{#2}\right)}

\newcommand{\F}{\mathsf{F}}

\newcommand{\bF}{{\overline{\mathsf{F}}}}
\newcommand{\otF}{\ot_\F}

\newcommand{\dotF}{{\scriptstyle{\!\:\bullet}}_{\scriptscriptstyle{\F\!\:}}}
\newcommand{\dott}{{\scriptstyle{\bullet}_\theta}}

\newcommand{\zero}[1]{{#1}_{\scriptscriptstyle{(0)}}}
\newcommand{\one}[1]{{#1}_{\scriptscriptstyle{(1)}}}
\newcommand{\two}[1]{{#1}_{\scriptscriptstyle{(2)}}}


\newcommand{\fone}[1]{{#1}_{\scriptscriptstyle{[1]}}}
\newcommand{\ftwo}[1]{{#1}_{\scriptscriptstyle{[2]}}}

%

\newcommand{\aut}[1]{\mathrm{aut}^\r_B(#1)}

\newcommand{\autF}[1]{\mathrm{aut}^{\r_F}_{B_\F}(#1)}

\newcommand{\Der}[1]{\mathrm{Der}{(#1)}}

\newcommand{\DerH}[1]{\mathrm{Der}^\r_{{\M^H}}{(#1)}}
\newcommand{\DerM}[1]{\mathrm{Der}_{{\M^H}}{(#1)}}
\newcommand{\DerHF}[1]{\mathrm{Der}^{\r_\F}_{{\M^{H_\F}}}{(#1)}}


\newcommand{\g}{\mathfrak{g}}
\newcommand{\Hom}{{\rm{Hom}}}

\newcommand{\U}{\mathcal{U}}

\newcommand{\stwo}{\tfrac{1}{\sqrt{2}}}
\parskip = 1 ex
\newcommand{\parn}[1]{\partial_{\scriptscriptstyle{#1}}}

\newcommand{\tpr}{\,{\raisebox{.28ex}{\rule{.6ex}{.6ex}}}\,}
\renewcommand{\bot}{\boxtimes}


\newcommand{\wee}{\mathsf{K}}
\newcommand{\el}{\mathcal{E}}

\newcommand{\defp}{\lambda}
 
\renewcommand{\bot}{\boxtimes}

\newcommand{\wdg}[2]{\, \textsf{#1}  \wedge \textsf{#2}}

\newcommand{\alh}{\upalpha}
\newcommand{\beh}{\upbeta}
\newcommand{\xh}{\mathsf{x}}
\newcommand{\phia}{\:\!\varphi_{\!\scriptscriptstyle{\alh}}}
\newcommand{\phib}{\:\!\varphi_{\!\scriptscriptstyle{\beh}}}

\newcommand{\suc}{\succ}

\begin{document}

\begin{abstract} 
We consider noncommutative principal bundles which are equivariant under a triangular 
Hopf algebra. We present explicit examples of infinite dimensional
braided Lie and Hopf algebras of infinitesimal gauge transformations
of bundles on noncommutative spheres.
The braiding of these algebras is implemented by  the triangular structure of the symmetry Hopf algebra.
We present a systematic analysis of compatible  $*$-structures,
encompassing the quasitriangular case.
\end{abstract}

\maketitle

\begin{spacing}{0.01}
  \tableofcontents
\end{spacing}

\parskip =.75 ex
\allowdisplaybreaks[4]

\section[Intro]{Introduction}

The gauge group of a principal bundle can be given as bundle automorphisms (diffeomorphisms of the total space onto itself which respect the group action) covering the identity map on the base space. 
Elements of the gauge group act by pullback on the space of connection one-forms on the bundle, thus playing a central role for the definition of the moduli space of connections. 
In the dual algebraic language a principal bundle is given as an algebra extension $B\subseteq A $ which is $H$-Hopf--Galois, for $H$ a Hopf algebra.  A gauge transformation would then be given as an $H$-equivariant algebra morphism of $A$ onto itself which restricts to the identity on $B$. 
This dual definition works well for the case of commutative algebras, and also for 
Hopf--Galois extensions with $H$ coquasitriangular and $B$ commutative \cite{pgc}. It is however too restrictive in general, due to the scarcity of morphisms for a generic noncommutative algebra.
Finding a good notion of bundle automorphisms and gauge transformations for noncommutative principal bundles is still an open problem. 

In our previous paper \cite{pgc-main} we looked at  the problem from the infinitesimal view-point by considering 
algebra derivations, rather than algebra morphisms. We then considered the case of 
$H$-Hopf--Galois extensions
which are equivariant under a triangular Hopf algebra $(K,\r)$. 
 Infinitesimal gauge transformations are now given by $H$-comodule maps that are vertical braided derivations, with the brading implemented by the triangular structure $\r$ of the symmetry $K$. These maps form a braided Lie algebra
$\aut{A}$  and lead to a braided Hopf algebra $\mathcal{U}(\aut{A})$
 of infinitesimal gauge transformations.
The construction is  shown to be compatible with the theory of Drinfeld twists, and thus  suitable for the study of
noncommutative principal bundles that are obtained via twist deformation
(quantization) of classical ones.
We  refer to  \cite{pgc-main} for details and for a discussion of different approaches and of 
the literature on the subject.

 In the present paper we illustrate the general theory, developed in \cite{pgc-main},  
with the computation of the braided Lie algebras of infinitesimal gauge transformations of two important examples of noncommutative principal bundles. These  are given by two Hopf--Galois extensions of  the algebra $\O(S^4_\theta)$ of the noncommutative $4$-sphere $S^4_\theta$ of \cite{cl01}. 
After a  brief review in \S \ref{se:hge} of the general theory, in \S \ref{sec:inst} we determine the braided Lie algebra  $\mathrm{aut}_{\O(S^4_\theta)}(\O(S^7_\theta))$ of infinitesimal gauge transformations of the 
 $\O(SU(2))$-Hopf--Galois extension $\O(S^4_\theta) \subset \O(S^7_\theta)$ of \cite{LS0}.
This bundle can  also be
 obtained as a deformation by a twist on $\O(\IT^2)$ of the Hopf--Galois extension
 $\O(S^4) \subset \O(S^7)$ of the  classical
 $SU(2)$-Hopf bundle \cite{ppca}.  This allows for the construction of 
 $\mathrm{aut}_{\O(S^4_\theta) }(\O(S^7_\theta) )$ from its classical
 counterpart, following  the general theory. The explicit
 description of the
 classical gauge Lie algebra
 $\mathrm{aut}_{\O(S^4)}(\O(S^7))$  relies on the $Spin(5)$
 equivariance of the principal bundle $S^7\to S^4$. This equivariance  also 
implies that, as linear space, 
$\mathrm{aut}_{\O(S^4)}(\O(S^7))  $ 
splits as a direct sum over a class of representations of the Lie algebra $so(5)$, of vertical $\O(SU(2))$-equivariant derivations, see \S \ref{se:rtd}.
Following a similar procedure,  in  \S \ref{S2n} we compute the braided Lie algebra  of infinitesimal gauge transformations of the  $\O(SO_\theta(4,\mathbb{R}))$-Hopf--Galois extension  
$\O(S^{4}_\theta)  \subset
\O(SO_\theta(5,\mathbb{R}))$ of the quantum homogeneous space  $\O(S^{4}_\theta)$ \cite{var,ppca}.    
 Among the tools developped in the paper we mention a systematic analysis in \S\ref{sec:*} of  $*$-structures on 
 braided Hopf algebras associated with quasitriangular Hopf
 algebras and their compatibility with actions on
 $*$-algebras. In the triangular case we further consider braided Lie $*$-algebras and
 their representations on $*$-algebras. 
  
The braided Lie algebras of gauge transformations of these   Hopf--Galois extensions of  $\O(S^4_\theta)$ are re-obtained in \cite{pgc-Atiyah} via an intrinsic construction, which does not  
use the twist procedure. They are further studied there in the context of Atiyah sequences of braided Lie algebras, generalising the Atiyah sequence of a principal bundle.

\section{Braided Lie algebras of gauge transformations}\label{se:hge}

The main objects investigated in this paper are $K$-equivariant Hopf--Galois extensions, for $(K, \r)$ a triangular Hopf algebra, and their braided Lie algebras of gauge symmetries. We briefly 
recall from \cite{pgc-main} the main notions and results that are needed. 

We work in the category of $\bbK$-modules for $\bbK$ a commutative
field, or the ring of formal power series in an indeterminate and
coefficients in a field.
All algebras  are  assumed to be unital and associative; morphisms of
algebras preserve the unit. 
Dually for coalgebras. We use standard terminologies and notations in
Hopf algebra theory. 
For
$H$ a bialgebra we also call  $H$-equivariant a map of $H$-modules or $H$-comodules.

Recall that an algebra $A$ is a right $H$-comodule algebra for a Hopf algebra $H$ 
if it carries a right coaction $\delta: A \to A \ot H$ which is a morphism of algebras.  As usual we write $\delta(a)= \zero{a} \ot \one{a}$ in Sweedler notation with an implicit sum.   Then the subspace of coinvariants $B:= A^{coH}=\big\{b\in A ~|~ \delta (b) = b \otimes 1_H \big\}$ is a subalgebra of $A$.  
The algebra extension $B\subseteq 
A$ is called  an $H$-Hopf--Galois extension if the canonical  map
\begin{align}\label{canonical}  \can := (m \ot \id) \circ (\id \otimes_B \delta ) : A \otimes_B A  \longrightarrow A \ot H~  ,  \quad a' \ot_B a \longmapsto a' a_{\;(0)} \ot a_{\;(1)} 
\end{align} 
is bijective. There may be additional requirements, such as faithful flatness of $A$ as a right $B$-module, 
to be mentioned when needed.

In the present paper we deal with $H$-Hopf--Galois extensions  which are
$K$-equivariant for a Hopf algebra $K$.
That is $A$ carries also a left action $\trl \, : K\otimes A \to A$ that commutes with the right $H$-coaction,
$ \delta\!\:\circ
\trl= (\trl \ot \id) \circ (\id\otimes \delta)$ (the coaction $\delta$ is a $K$-module map where $H$ has trivial $K$-action).
On elements  $k\in K, a\in A$,
\beq\label{compatib}
\zero{(k \trl a)} \ot \one{(k \trl a)} = ( k \trl \zero{a} ) \ot \one{a} \; .
\eeq 
We further assume the Hopf algebra $K$ to be quasitriangular. Recall that a bialgebra (or Hopf algebra) $K$ is quasitriangular if there exists an invertible element  $\r \in K \ot K$ (the universal $R$-matrix of $K$) with respect to which the coproduct
  $\Delta$ of $K$ is quasi-cocommutative
\beq\label{iiR} 
\Delta^{cop} (k) = \r \Delta(k) \br
\eeq for each $k \in K$, with $\Delta^{cop} := \flip \circ \Delta$, $\tau$ the flip map, 
 and $\br \in K \ot K$ the inverse of $\r$, $\r \br = \br \r = 1 \ot 1 $.
Moreover $\r$ is required to satisfy, 
\beq \label{iii} (\Delta \ot \id) \r=\r_{13} \r_{23} \qquad \mbox{and} \qquad 
(\id \ot \Delta)\r=\r_{13} \r_{12}.
 \eeq
 We write $\r:=\r^\alpha \ot \r_\alpha $ with an implicit sum. Then
$\r_{12}=\r^\alpha \ot \r_\alpha \ot 1$,  and similarly for $\r_{23}$ and $\r_{13}$. 
From conditions \eqref{iiR} and \eqref{iii} it follows that $\r$ satisfies the quantum Yang--Baxter equation
$
\r_{12} \r_{13} \r_{23}= \r_{23} \r_{13}\r_{12} 
$. 
The $\r$-matrix  of a quasitriangular bialgebra $(K,\r)$ is unital:
$(\cun \ot \id) \r = 1 =  (\id \ot \cun) \r$, 
with $\cun$ the counit of $K$.
When $K$ is a  Hopf algebra, the quasitriangolarity implies  that its antipode $S$ is invertible and satisfies
\beq\label{ant-R}
(S \ot \id) (\r)= \br  \; ; \quad
(\id \ot S) (\br)= \r \; ; \quad
(S \ot S) (\r)= \r \; .
\eeq
The Hopf algebra $K$ is said to be triangular when 
$\br= {\r}_{21},$ with ${\r}_{21}= \flip(\r)=\r_\alpha \ot \r^\alpha$.

\subsection{Braided Hopf algebras} \label{se:bha}
We recall that a braided bialgebra associated with a quasitriangular
Hopf algebra $( K,\r)$ is 
   a $K$-module  $(L ,\trl_L)$ which is both a $ K$-module algebra 
$(L, m_L, \eta_L, \trl_L)$ and a $ K$-module coalgebra $(L, \Delta_L,\cun_L, \trl_L)$ and is  
a  bialgebra
in the braided monoidal category of $ K$-modules.
That is, $\cun_L : L \to \bbK$ and  $\Delta_L : L \to L  \bot L $ are algebra maps 
with respect to the product   in $L$ and 
the product $\tpr$  in $L\ot L$   defined by
\beq\label{tpr}
(  x \ot   y) \tpr (  x' \ot   y'):= x  \Psi_\r( x' \ot y )  y' =
  x(\r_\alpha \trl_L   x') \ot (\r^\alpha \trl_L   y)   y'\, 
\eeq
for $ x, y, x', y' \in L$ 
and 
\beq\label{braiding}
\Psi_\r:L\ot L\to L\ot L~,\qquad \Psi_\r(x\ot
  y)=\r_\alpha \trl_L y\ot \r^\alpha\trl_L x
\eeq
the braiding. 
We denote $L \bot L = (L \ot L, \tpr)$. 
It is a $K$-module algebra with action
\beq\label{probot}
k \trl_{L \bot L}  (x \bot y) := (\one{k} \trl _L x) \bot (\two{k} \trl _L y).
\eeq

Such an   $L$ is a braided Hopf algebra if there is a $ K$-module map
$S_L : L\to L $, the braided antipode, which satisfies
\beq 
m_L\circ (\id_L\otimes S_L)\circ \Delta_L=
\eta_L\circ \cun_L =
m_L\circ (S_L\otimes\id_L)\circ \Delta_L ~. \label{bantipode}
\eeq
It turns out that $S_L$ is a  braided algebra map
\beq\label{S-algmapA}
S_L (xy)  = ( \r_\alpha \trl_L  S_L( y))(\r^\alpha \trl_L S_L( x)) 
\eeq
and a braided coalgebra map:
\beq\label{S-algmapB}
\Delta_L\circ S_L ( x) = S_L (\r_\alpha \trl_{{L}} \two{ x}) \bot S_L(\r^\alpha \trl_{{L}} \one{ x} ) 
= \r_\alpha \trl_{{L}}  S_L (\two{ x}) \bot \r^\alpha \trl_{{L}}  S_L(\one{ x} )\, .  
\eeq

Due to the quasi-cocommutativity property \eqref{iiR}, the action in \eqref{probot} commutes with the braiding: 
$\trl_{L \bot L} \circ\:\!\Psi_\r=\Psi_\r\circ \trl_{L \bot L}$.
 More generally, given  two $K$-modules $V, W$, the induced action of the coalgebra $K$ on their tensor product commutes with the 
braiding:
$  \Psi_\r:V\otimes W\to W\otimes V$, $\Psi_\r(v\otimes w)=\r_\alpha\trl
w\otimes\r^\alpha\trl v$, that is 
$
k \trl_{V\ot W} \circ \Psi_\r =\Psi_\r \circ k \trl_{V\ot W}
$ for $k \in K$.

\subsection{Braided Lie algebras of derivations}
In order to study braided Lie algebras we take $(K,\r)$ to be
triangular, and not just quasitriangular. This simplifying assumption
is enough for the purposes of the present paper.
A  braided Lie algebra associated with a triangular
  Hopf algebra $(K, \r)$, or simply a $K$-braided Lie algebra, 
is a $K$-module $\g$ with a
bilinear map
$$
[~,~]: \g\otimes \g\to \g
$$
that satisfies the following conditions:
\begin{enumerate}[(i)]
\item 
$K$-equivariance: for $\Delta(k) = \one{k} \ot \two{k}$ the coproduct of $K$,
$$k\trl [u,v]=[\one{k}\trl u,\two{k}\trl v] $$ 
\item braided antisymmetry: $$[u,v]=-[\r_\alpha\trl v, \r^\alpha \trl u ] , $$
\item
braided Jacobi identity: $$[u,[v,w]]=[[u,v],w]+[\r_\alpha\trl v, [\r^\alpha \trl u,w] ] , $$
\end{enumerate}
for all $u,v,w \in \g$, $k\in K$.

As shown in  \cite[\S 5.1]{pgc-main}, the universal enveloping algebra $\U(\g)$ of a braided Lie
  algebra $\g$ associated with $(K,\r)$ is a braided Hopf algebra.   
The coproduct of   $\U(\g)$ is determined requiring the elements of
$\g$ to be primitive,  $\Delta(u)=u\bot 1+1\bot u$,  for all $u\in\g$.
 
Any $K$-module algebra $A$  is a $K$-braided Lie algebra with bracket given by the braided commutator
\beq\label{Abla}
[~,~]: A\ot A\to A, \qquad a\ot b\mapsto [a,b] = ab - (\r_\alpha \trl b) \, (\r^\alpha\trl a) \, .
\eeq 
 (See \cite[Lemma 5.2]{pgc-main}.) 
In particular, if 
$A$ is a $K$-module algebra, then also the $K$-module algebra $(\Hom(A,A), \trl_{\Hom(A,A)})$
of linear maps from $A$ to $A$ with
action
\begin{align}\label{action-hom}
\trl_{\Hom(A,A)}:  K \ot \Hom(A,A) &\to \Hom(A,A) 
\nn \\
k \ot \psi & \mapsto  k \trl_{\Hom(A,A)} \psi : \; A \mapsto \one{k} \trl_A \psi(S(\two{k})\trl A) 
\end{align}
is a  braided Lie algebra with the braided commutator;  here  $S$ is the antipode of $K$.
Elements 
$\psi$ in  $\Hom(A,A)$ which satisfy 
\beq\label{Der}
\psi(aa')= \psi(a)
a' + (\r_\alpha\trl a)\,(\r^\alpha 
\trl_{\Hom(A,A)}
\psi)(a') 
\eeq
for any $a,a'$ in $A$ are called  {braided derivations}. 
We denote
$
{\mathrm{Der}}^\r(A)
$
the  $\bbK$-module
of {braided derivations} of $A$ (to lighten notation we often drop the subscript $\r$).
It is a $K$-submodule of $\Hom(A,A)$, 
with action given by the restriction of 
 $\trl_{\Hom(A,A)}$
 \begin{align}\label{action-der}
\trl_{\Der{A}}:  K \ot \Der{A} &\to \Der{A}
\nn \\
k \ot \psi & \mapsto  k \trl_{\Der{A}} \psi : \; a \mapsto \one{k} \trl \psi(S(\two{k})\trl a) 
\end{align}
and moreover,  see \cite[Prop. 5.7]{pgc-main},  a  braided Lie subalgebra of  $\Hom(A,A)$ with
\begin{align}\label{bracket-der}
[~ , ~]  :& \, \Der{A}\ot \Der{A} \to \Der{A}
\nn \\
& \psi \ot \lambda \mapsto [\psi,\lambda] :=\psi\circ \lambda-(\r_\alpha\trl_{\Der{A}} \lambda) \circ (\r^\alpha\trl_{\Der{A}} \psi).
\end{align}
When the $K$-module algebra $A$ is quasi-commutative, 
that is when
\beq\label{qcAB} 
a\, a'  =   (\r_\alpha \trl{a'}) \, (\r^\alpha\trl{a})~, 
\eeq
 for all $ a,a' \in A$, the  braided Lie algebra $\Der{A}$ 
with
\begin{equation}\label{AmodderA}
  (a \psi)(a'):=a\, \psi(a'),
  \end{equation} 
for  $\psi \in \Hom(A,A)$, $a,a'\in A$, is also a left $A$-submodule of $\Hom(A,A)$.  
The Lie bracket   of $\Der{A}$   satisfies  (\cite[Prop. 5.8]{pgc-main})
\begin{equation}\label{moduloLie}
\begin{split}
 [a \psi, a'\psi'] = a \psi(a') \psi'
&+ a(\r_\alpha\trl a')  [\r^\alpha \trl_{\Der{A}}
    \psi,\psi']\\[.2em]
&    -\r_\beta\r_\alpha\trl a' \big(\r_\delta\r_\gamma \trl_{\Der{A}} \psi'\big)\!\!\:( \r^\delta\r^\beta \trl a) \, 
    \r^\gamma\r^\alpha \trl_{\Der{A}} \psi~
\end{split}
\end{equation}
 for all
$a,a'\in A, \psi,\psi'\in \Der{A}$.

\subsection{Infinitesimal gauge transformations}\label{sec:igt}
Let now $B=A^{co H} \subseteq A $ be a $K$-equivariant Hopf--Galois extension, for $(K,\r)$ a triangular Hopf algebra. 
Inside the braided Lie algebra $\Der{A}$  we consider the subspace of braided derivations that are 
 $H$-comodule maps, 
\begin{align}
\DerH{A} = \{u\in \Der{A} \;|\;  \delta (u(a))=u(\zero{a})\otimes \one{a} \, , \, \,  a \in A \}  \label{LieGG}
\end{align}
and then those derivations that are vertical,
\beq\label{BLieGG}
\aut{A} := \{u\in \DerH{A} \; | \; u(b)=0 \, ,
  \, \,  b\in B\}~. 
\eeq
The linear spaces $\DerH{A}$ and  $\aut{A}$ are $K$-braided Lie subalgebras of $\Der{A}$,  \cite[Prop. 7.2]{pgc-main}. 
Elements of $\aut{A}$ are regarded  as infinitesimal gauge
transformations of the $K$-equivariant Hopf--Galois extension $B=A^{co H} \subseteq A$, \cite[Def. 7.1]{pgc-main}.
There is the corresponding braided Hopf algebra $\U(\aut{A})$ of gauge transformations. 

\subsection{Twisting of braided Lie algebras}\label{sec:TBLA} 
Important   examples of noncommutative principal bundles come from twisting classical structures. 
Aiming at studying their braided Lie algebras of infinitesimal gauge transformations, 
we need to first consider twist deformations of braided Lie algebras.  

We recall some basic results of the theory of Drinfeld twists \cite{drin}.

Let $K$ be a bialgebra (or Hopf algebra). A { twist} for $K$  is an invertible  element 
$\F \in K \ot K$ which is unital, 
$
(\cun \ot \id) (\F)= 1= (\id \ot \cun)(\F) 
$,
and satisfies the twist condition
\beq\label{twist}
(\F \ot 1)[(\Delta \ot \id)(\F)]= (1 \ot \F)[(\id \ot \Delta) (\F)] ~.
\eeq
 For $\F$ and its inverse $\bF$ we write $\F=\F^{ \alpha} \ot \F_\alpha$ and $\bF=: \bF^{\alpha} \ot
\bF_{\alpha}$, with an implicit sum. 
\noindent
The $R$-matrix  $\r$ of a quasitriangular bialgebra $K$ is a twist for $K$.

When $K$ has a twist it can be endowed with a second bialgebra structure which is obtained by deforming its coproduct and leaving its counit and multiplication unchanged. Moreover if $K$ is triangular, or more in general quasitriangular, so is the new bialgebra: 
\begin{prop}\label{prop:twist}
Let $\F=\F^{ \alpha} \ot \F_\alpha$  be a twist on a bialgebra  $(K,  m, \eta, \Delta, \cun)$. Then
the algebra $(K,  m, \eta)$ with coproduct 
\beq\label{cop-twist}
\Delta_\F(k):= \F \Delta(k) \bF= \F^{\alpha} \one{k} \bF^{\beta} \ot \F_\alpha \two{k} \bF_{\beta}~, \qquad k \in K
\eeq
and counit $\cun$  is a bialgebra. If in addition $K$ is a Hopf algebra, then the
twisted bialgebra $\KF:=(K,  m, \eta, \Delta_\F, \cun)$
is a Hopf algebra with antipode $S_\F(k):=\uf S(k) \buf ,
$ where $\uf$ is the invertible element
$\uf:=\F^{\alpha} S(\F_\alpha)$ with $\buf= S(\bF^{\alpha})\bF_{\alpha}$ its inverse.

\noindent
Finally, if 
 $(K, \r)$ is a quasitriangular bialgebra  (a Hopf algebra), such is the twisted bialgebra  (Hopf algebra) $\KF$ with  $R$-matrix
\beq\label{urm-def}
\rF := \F_{21} \, \r \, \bF = \F_\alpha \r^\beta \bF^{\gamma} \ot \F^\alpha \r_\beta \bF_{\gamma} 
\eeq
and inverse   $\brF :=\F \, \br \, \bF_{21} = \F^\alpha \br^\beta \bF_{\gamma} \ot \F_\alpha \br_\beta \bF^{\gamma}$. If  $(K, \r)$ is  triangular, so is  $(\KF,\rF)$.
 \end{prop}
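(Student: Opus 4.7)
The plan is to verify each of the three parts of the proposition by direct computation, using the twist cocycle condition \eqref{twist} together with unitality of $\F$ and the axioms of $(K,\r)$. I will proceed in the order bialgebra, Hopf algebra, (quasi)triangular, since each step reuses the preceding data.

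For the bialgebra part, the key point is coassociativity of $\Delta_\F$. I would expand $(\Delta_\F\otimes\id)\Delta_\F(k)$, obtaining an expression of the form $(\F\otimes 1)(\Delta\otimes\id)(\F)\cdot(\Delta\otimes\id)\Delta(k)\cdot (\Delta\otimes\id)(\bF)(\bF\otimes 1)$, and similarly expand $(\id\otimes\Delta_\F)\Delta_\F(k)$. The twist cocycle condition \eqref{twist} applied to $\F$ (and to $\bF$) then identifies the two sides, using coassociativity of $\Delta$ in the middle. Counitality is immediate from $(\cun\otimes\id)\F=1=(\id\otimes\cun)\F$. That $\Delta_\F$ is an algebra map follows automatically from $\Delta$ being one, since conjugation by the invertible element $\F$ in $K\otimes K$ preserves this property.

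For the Hopf algebra claim, I would first check that $\uf$ and $\buf$ are mutually inverse by a direct computation: applying $m\circ(\id\otimes S)$ to the twist condition \eqref{twist} (appropriately re-bracketed), one obtains $\uf\,\buf = 1 = \buf\,\uf$. Then the crucial identity, obtained again from \eqref{twist} by applying $m\circ(S\otimes\id)\circ(\id\otimes m)$, is
\begin{equation*}
\F^\alpha\,S(\F_\alpha\bF^\beta)\,\bF_\beta = \uf
\end{equation*}
and its analogue with $S$ on the other side. Using these, I verify the antipode axioms $m\circ(S_\F\otimes\id)\circ\Delta_\F = \eta\cun = m\circ(\id\otimes S_\F)\circ\Delta_\F$ by substituting the definitions of $\Delta_\F$ and $S_\F$, collecting $\uf$ and $\buf$ next to each other so they cancel, and reducing to the antipode axiom for $S$.

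For the (quasi)triangular part, I would first verify the formula for $\brF$ by a straightforward cancellation: $\rF\cdot\brF = \F_{21}\r\bF\F\br\bF_{21} = \F_{21}\r\br\bF_{21} = 1\otimes 1$, and similarly on the other side. Quasi-cocommutativity is then immediate,
\begin{equation*}
\rF\,\Delta_\F(k)\,\brF \,=\, \F_{21}\r\bF\F\Delta(k)\bF\F\br\bF_{21} \,=\, \F_{21}\r\Delta(k)\br\bF_{21} \,=\, \F_{21}\Delta^{cop}(k)\bF_{21} \,=\, \Delta_\F^{cop}(k),
\end{equation*}
using \eqref{iiR} and the fact that $\Delta_\F^{cop} = \flip\circ\Delta_\F$. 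For the hexagon identities I would expand $(\Delta_\F\otimes\id)\rF$ using \eqref{cop-twist} and then repeatedly apply the twist cocycle \eqref{twist} and the hexagon identities \eqref{iii} for $\r$ to rearrange the resulting twist factors until the expression matches $(\rF)_{13}(\rF)_{23}$; the second hexagon is symmetric. Triangularity is then a one-line check: if $\br=\r_{21}$, then $\brF = \F\br\bF_{21} = \flip(\F_{21}\r\bF) = \flip(\rF) = (\rF)_{21}$. The main obstacle, in terms of bookkeeping, will be the hexagon computation, since three copies of $\F$ and $\bF$ must be correctly reassembled into $(\rF)_{13}(\rF)_{23}$; this is where the twist cocycle condition is used most subtly, applied in two different tensor factors in succession.
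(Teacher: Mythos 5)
The paper itself does not prove Proposition \ref{prop:twist}: it is recalled as a standard result of Drinfeld twist theory with a reference to \cite{drin}, so there is no in-paper argument to compare against. Your direct verification is the classical one and its overall architecture is sound: conjugation by the invertible $\F$ plus the cocycle condition \eqref{twist} gives coassociativity; unitality of $\F$ gives counitality; the inverse $\brF=\F\br\bF_{21}$, quasi-cocommutativity, and triangularity all follow by the one-line cancellations you indicate; and the hexagon identities are indeed the only genuinely laborious step. The one place where your sketch is imprecise is the antipode verification. The identity you display, $\F^\alpha S(\F_\alpha\bF^\beta)\bF_\beta=\uf$, is not the one that makes the computation close (the factors $S(\bF^\beta)$ and $S(\F_\alpha)$ come out in the wrong order for any cancellation against $\F\bF=1\ot1$, and the stated identity is not an obvious consequence of \eqref{twist}). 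The clean route is: (i) observe that
$S(\F^\alpha)\,\buf\,\F_\alpha=m\circ(S\ot\id)(\bF\F)=1$ and $\bF^\gamma\,\uf\,S(\bF_\gamma)=m\circ(\id\ot S)(\bF\F)=1$, which are immediate from $\bF\F=1\ot1$ and need no cocycle condition; (ii) prove $\uf\buf=\buf\uf=1$ by applying the map $x\ot y\ot z\mapsto xS(y)z$ to the identity $(1\ot\bF)(\F\ot1)=\big[(\id\ot\Delta)(\F)\big]\big[(\Delta\ot\id)(\bF)\big]$ obtained from \eqref{twist}. With (i) the expression $m\circ(S_\F\ot\id)\circ\Delta_\F(k)=\uf S(\bF^\gamma)S(\one{k})\,\big[S(\F^\alpha)\buf\F_\alpha\big]\,\two{k}\bF_\gamma$ collapses to $\cun(k)\,\uf\buf$, and (ii) finishes the axiom; the other antipode axiom is symmetric. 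With this correction your argument is complete and coincides with the standard proof in the literature.
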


Any  $K$-module   $V$ with left action $\trl_V : K \ot V \to V$, is also a $\KF$-module
with the same linear map  $\trl_V$, now thought as a map 
$\trl_V: \KF \ot V \to V $. 
When thinking of $V$ as a $\KF$-module we denote it by $V_\F$, with action $\trl_{V_F}$. 
Moreover, any $K$-module morphism $\psi : V\to W$ 
can be thought of as a morphism $\psi_F  : V_\F \to W_\F$.  

If $A$ is a $K$-module algebra, with multiplication $m_A$ and unit $\eta_A$, in order for the action $\trl_{A_\F}$ to be an algebra map one has to endow  the $\KF$-module
$A_\F $   with a  new 
algebra
structure:  the unit is unchanged,  while the product is deformed to
\begin{align}\label{rmod-twist} 
 m_{A_\F} : A_\F \otF A_\F \, \longrightarrow A_\F~ , \qquad 
a\otF a^\prime \, \longmapsto a \dotF a':= (\bF^{\alpha} \trl_A a)\, (\bF_{\alpha} \trl_A a') .
\end{align}
For any $K$-module algebra map $\psi : A\to A^\prime$, the $\KF$-module map    
$\psi : A_\F \to A^\prime_\F$ is an algebra map  for
 the deformed products. 

If $C$ is a $K$-module coalgebra, the $\KF$-module $C_\F$ is a $K_F$-module coalgebra with 
counit $\varepsilon_\F=\varepsilon$ as linear map and coproduct 
 \beq\label{comod-twist}
\Delta_\F: C_\F\to C_\F\otimes_\F C_\F~,
  ~~c\mapsto \Delta_\F(c)=\F^\alpha\trl \one{c}\otimes_\F\F_\alpha\trl
  \two{c}~.
  \eeq
  The twist
  $L_\F$ of
  a braided Hopf algebra $L$ is obtained twisting $L$ as a $K$-module
  algebra and as a $K$-module coalgebra, cf. \cite[ Prop.~4.11]{pgc-main}.

We next recall that the action of a braided  Hopf algebra (or just bialgebra) 
$L$ on a $K$-module algebra $A$ is a $K$-equivariant action $\btrl_A:L\ot A\to A$
 which satisfies 
\beq\label{LAbraidedaction}
x \btrl_A (a a')=(\one{x}\btrl_A (\r_\alpha \trl_A  a))\, ((\r^\alpha \trl_L \two{x})\btrl_A a') ~,
\eeq
for all $a,a'\in A$.
When twisting this leads to an action
\beq\btrl_{A_\F}: L_\F\otimes_\F A_\F\to
A_\F~,~~x\btrl_{A_\F} a=(\bF^\alpha\trl_L x)\btrl_{A} (\bF_\alpha \trl_A a)~.
\eeq

When $\g$ is a braided Lie algebra associated with a triangular Hopf algebra
$(K,\r)$, and $\F$ is a twist for $K$, the $K_\F$-module $\g_F$
  inherits from $\g$ a twisted bracket (\cite[Prop. 5.14]{pgc-main}):
\begin{prop}\label{prop:gf}
The $K_\F$-module $\g_\F$ with bilinear map 
\beq 
[~ , ~]_\F  =  \g_\F\otimes \g_\F\to \g_\F~,~~u\otimes v\to [u,v]_\F :=[\bF^\alpha \trl_\g u,\bF_\alpha\trl _\g v]~
\eeq
is a 
braided Lie algebra associated with $(K_\F, \r_\F)$.
\end{prop}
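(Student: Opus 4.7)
The plan is to check directly the three axioms of a braided Lie algebra associated with $(K_\F,\r_\F)$ for the bracket $[\,,\,]_\F$, using: the identities $\bF\F=\F\bF=1\ot1$; the twist cocycle \eqref{twist}; the expression $\r_\F=\F_{21}\r\bF$ from Proposition~\ref{prop:twist}; and the three axioms already satisfied by $[\,,\,]$ with respect to $(K,\r)$. Throughout, the underlying $\bbK$-module of $\g_\F$ is $\g$ and the $K_\F$-action coincides, as a linear map, with the original $K$-action $\trl$. For $K_\F$-equivariance, I would expand $\Delta_\F(k)$ via \eqref{cop-twist} and unfold the $[\,,\,]_\F$ on the right-hand side of the desired identity to produce $[\bF^\gamma\F^\alpha \one{k}\bF^\beta\trl u,\,\bF_\gamma\F_\alpha \two{k}\bF_\beta\trl v]$; the factor $\bF^\gamma\F^\alpha\ot\bF_\gamma\F_\alpha=1\ot 1$ collapses, and $K$-equivariance of $[\,,\,]$ then converts this to $k\trl[\bF^\beta\trl u,\bF_\beta\trl v]=k\trl[u,v]_\F$, as required.

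For braided antisymmetry, $\r$-antisymmetry of $[\,,\,]$ gives
\[
[u,v]_\F=[\bF^\alpha\trl u,\bF_\alpha\trl v]=-[\r_\gamma\bF_\alpha\trl v,\,\r^\gamma\bF^\alpha\trl u].
\]
The key identity here is $\bF^\delta(\r_\F)_\epsilon\ot\bF_\delta(\r_\F)^\epsilon=\r_\gamma\bF_\alpha\ot\r^\gamma\bF^\alpha$, obtained from $\r_\F=\F_{21}\r\bF$ by collapsing $\bF\F=1\ot1$ exactly as in the equivariance step; the displayed expression therefore equals $-[(\r_\F)_\epsilon\trl v,(\r_\F)^\epsilon\trl u]_\F$.

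The main obstacle is the braided Jacobi identity, where the cocycle \eqref{twist} is used in full. Starting from $[u,[v,w]_\F]_\F=[\bF^\alpha\trl u,\,\bF_\alpha\trl[\bF^\beta\trl v,\bF_\beta\trl w]]$, I would apply $K$-equivariance of $[\,,\,]$ to push $\bF_\alpha$ through the inner bracket, producing factors $(\Delta\ot\id)(\bF)$ acting on $v\ot w$; the cocycle \eqref{twist}, rewritten for $\bF$, trades these for $(\id\ot\Delta)(\bF)$-factors with a compensating $\bF$ on the first two legs. The braided Jacobi identity for $(\g,[\,,\,],\r)$ then yields two terms which, upon reversing the manipulations of the first two steps, are recognised as $[[u,v]_\F,w]_\F$ and $[(\r_\F)_\alpha\trl v,\,[(\r_\F)^\alpha\trl u,w]_\F]_\F$. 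Beyond the index bookkeeping, the whole argument reflects the fact that the twist implements a braided monoidal equivalence between the categories of $K$- and $K_\F$-modules, so that internal structures such as braided Lie brackets transport functorially.
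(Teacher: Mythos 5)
This proposition is recalled in the paper from \cite[Prop.~5.14]{pgc-main} and is stated here \emph{without} proof, so there is no in-paper argument to compare against; judged on its own, your direct verification of the three axioms is correct. The equivariance and antisymmetry steps are exactly right: in both cases the inner $\bF$ of the outer bracket meets an $\F$ coming from $\Delta_\F$ (respectively from $\r_\F=\F_{21}\r\bF$) and collapses via $\bF\F=1\ot 1$, after which the corresponding axiom for $(\g,[\,,\,],\r)$ finishes the job. For the Jacobi identity your plan also goes through, with two small caveats worth recording. First, you have the two sides of the cocycle identity swapped in your description: expanding $\bF_\alpha\trl[\bF^\beta\trl v,\bF_\beta\trl w]$ by equivariance produces the three-leg tensor $[(\id\ot\Delta)(\bF)](1\ot\bF)$, and it is the inverted twist condition $[(\id\ot\Delta)(\bF)](1\ot\bF)=[(\Delta\ot\id)(\bF)](\bF\ot 1)$ that converts it into a coproduct sitting on the first two legs, which is what lets you recognise $[[u,v]_\F,w]_\F$ after applying the braided Jacobi identity for $\g$. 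Second, identifying the remaining term with $[{\r_\F}_\alpha\trl v,[{\r_\F}^\alpha\trl u,w]_\F]_\F$ is not merely a reversal of the first two steps: one must commute $\r_{21}$ past the legs of $(\Delta^{cop}\ot\id)(\bF)$, which uses the quasi-cocommutativity \eqref{iiR} in the form $\r_{21}\Delta^{cop}(k)=\Delta(k)\r_{21}$ --- an ingredient you did not list. Both points are repairable bookkeeping rather than gaps, and your closing remark that the twist implements a braided monoidal equivalence transporting internal Lie structures is indeed the conceptual reason the statement holds.
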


As a particular case of the above, consider the braided Lie algebra $\big(\Der{A}, [\, , \, ] \big)$, for $A$ a $K$-module  algebra. It  consists of the $K$-module of braided derivations of  $A$ to itself, with action 
$\trl_{\Hom(A,A)}$ as in \eqref{action-der}, and  bracket the braided commutator \eqref{bracket-der}.  It is a braided Lie algebra   associated with the triangular Hopf algebra $(K, \r)$.

On the one hand, we obtain the braided Lie algebra $(\Der{A}_\F) , [\, ,\, ]_\F)$ associated with the triangular  Hopf algebra $(K_\F, \r_\F)$. The $K_\F$-action $\trl_{\Der{A}_F}$ coincides with  $\trl_{\Der{A}}$  as linear map.
 The Lie bracket is given by the braided commutator
$$
[\psi,\lambda]_\F = \psi \circ_\F \lambda -  ( \rF_\alpha  \trl_{\Der{A}} \lambda)  \circ_\F (  \rF^\alpha \trl_{\Der{A}} \psi)\; ,
$$
with the composition (in fact in $(\Hom(A,A), \circ)$) that is changed as in \eqref{rmod-twist}:
\beq\label{circF}
\psi \circ_\F \phi = (\bF^{\alpha} \trl_{\Der{A}} \psi)\circ (\bF_{\alpha} \trl_{\Der{A}} \phi) \, .
\eeq  

On the other hand, there is  the braided Lie algebra $\Der{A_\F}$ of the $K_\F$-module  $A_\F$ 
associated with   $(K_\F, \r_\F)$.
We use the notation $\Delta_\F (k)=: \fone{k} \ot \ftwo{k}$ for the coproduct in $\KF$ to distinguish it from the original one $\Delta(k)= \one{k} \ot \two{k}$ in $K$.
 The $K_\F$-action is 
\begin{align}\label{action-homF}
\trl_{\Der{A_\F}}:  K_\F \ot \Der{A_\F} &\to \Der{A_\F} \\
k \ot \psi & \mapsto  k \trl_{\Der{A_\F}} \psi : \; a \mapsto \fone{h} \trl_{A_\F} \psi(S_\F(\ftwo{h})\trl_{A_\F} a) .
\nn \end{align}
with bracket 
$$
[\psi,\lambda]_{\rF} = \psi \circ \lambda -  ( \rF_\alpha  \trl_{\Der{A_\F}} \lambda)  
\circ ( \rF^\alpha \trl_{\Der{A_\F}} \psi)\; .
$$
These two braided Lie algebras are isomorphic
\cite[Thm. 5.19]{pgc-main}:
\begin{thm}\label{thm:Dalg}
  The braided  Lie algebras $(\Der{A}_\F , [\, ,\, ]_\F )$ and
$(\Der{A_\F} , [\, ,\, ]_{\rF})$ are isomorphic via the map
\begin{align}\label{mappaD}
\dd: \Der{A}_\F \to  \Der{A_\F} , \qquad
\psi \mapsto \dd(\psi): a \mapsto (\bF^\alpha \trl_{\Der{A}_\F} \psi)(\bF_\alpha \trl_A a) \; ,
\end{align}
which satisfies 
$\dd \big([\psi,\lambda]_\F \big) = [\dd (\psi) , \dd( \lambda) ]_{\rF}$, for all $\psi,\lambda \in \Der{A}_\F$. 
It has inverse
\begin{align}\label{mappaDinv}
\dd^{-1}: \Der{A_\F} \to \Der{A}_\F , \qquad
\psi \mapsto 
\dd^{-1}(\psi): a \mapsto (\F^\alpha  \trl_{\Der{A_\F}} \psi)(\F_\alpha \trl_{A_\F}  a) \; .
\end{align}
\end{thm}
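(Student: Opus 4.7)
The plan is to verify the four statements implicit in the theorem: (i) $\dd(\psi)$ lies in $\Der{A_\F}$, (ii) $\dd$ and $\dd^{-1}$ are mutually inverse, (iii) $\dd$ is a $K_\F$-module map, and (iv) $\dd$ intertwines the two braided Lie brackets. Steps (ii) and (iii) are essentially book-keeping with the twist. For (ii) one uses $\F\bF = 1\ot 1$ together with the fact that, as linear maps, $\trl_{\Der{A}_\F}$ and $\trl_{\Der{A}}$ coincide, and likewise for the actions on $A$ and $A_\F$. For (iii) one inserts the definition \eqref{cop-twist} of $\Delta_\F$ into \eqref{action-homF}, compares with \eqref{action-der}, and uses the twist condition \eqref{twist} to absorb the inner $\bF$ into the outer twist, showing that $\dd(k\trl_{\Der{A}_\F}\psi) = k\trl_{\Der{A_\F}}\dd(\psi)$.

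For (i), the main point is to translate the braided derivation property \eqref{Der} of $\psi$ on $A$ into the analogous property of $\dd(\psi)$ on $(A_\F,\dotF)$. I would compute
\[
\dd(\psi)(a\dotF a')=(\bF^\alpha\trl\psi)\big((\bF_\alpha{}_{(1)}\bF^\beta\trl a)(\bF_\alpha{}_{(2)}\bF_\beta\trl a')\big),
\]
then use the twist condition \eqref{twist} to rewrite $\bF_\alpha{}_{(1)}\bF^\beta\ot \bF_\alpha{}_{(2)}\bF_\beta$ in terms of $\bF\ot\bF$, apply the ordinary braided Leibniz rule of $\psi$, and finally reassemble the two resulting terms, using the formula \eqref{urm-def} for $\rF$, as $\dd(\psi)(a)\dotF a' + (\rF_\alpha\trl a)\bigl(\rF^\alpha \trl_{\Der{A_\F}} \dd(\psi)\bigr)(a')$.

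For (iv), expanding $[\psi,\lambda]_\F$ using \eqref{circF} and Proposition \ref{prop:gf} gives
\[
[\psi,\lambda]_\F=(\bF^\alpha\trl\psi)\circ(\bF_\alpha\trl\lambda)-\bigl(\rF_\alpha\trl_\F\lambda\bigr)\circ_\F\bigl(\rF^\alpha\trl_\F\psi\bigr),
\]
while $[\dd(\psi),\dd(\lambda)]_{\rF}$ expands via the unadorned composition and $\rF$-braided commutator in $\Der{A_\F}$. Applying $\dd$ to the first expression produces two nested compositions with factors of $\bF$ on the outside and inside; using \eqref{twist} once more, these reorganise exactly into the defining formula for the bracket on $\Der{A_\F}$ evaluated on $\dd(\psi)$ and $\dd(\lambda)$.

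The main obstacle will be the manipulation in step (i), where one must manage several simultaneous factors of $\F$, $\bF$, and $\r$ and invoke both the twist condition \eqref{twist} and the quasitriangularity relations \eqref{iiR}--\eqref{iii} in the correct order so that the $\r$-matrix on $A$ genuinely converts into the $\rF$-matrix on $A_\F$. Once this bookkeeping is carried out cleanly, the remaining steps are straightforward consequences of the same calculations already developed for twisting of $K$-module algebras and of braided Lie algebras in Proposition \ref{prop:twist} and Proposition \ref{prop:gf}.
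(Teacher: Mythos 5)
This paper does not actually prove Theorem \ref{thm:Dalg}: it is recalled verbatim from the companion paper (cited there as Thm.~5.19 of \cite{pgc-main}), where the proof proceeds essentially along the lines you outline, first at the level of $\big(\Hom(A,A)_\F,\circ_\F\big)\to\big(\Hom(A_\F,A_\F),\circ\big)$ and then by restriction to braided derivations. Your plan is sound and hits all the necessary points; I flag only one logical issue. Your justification of (ii) --- ``$\F\bF=1\ot1$ together with the fact that, as linear maps, $\trl_{\Der{A}_\F}$ and $\trl_{\Der{A}}$ coincide'' --- is not enough by itself: the action appearing in the formula \eqref{mappaDinv} for $\dd^{-1}$ is $\trl_{\Der{A_\F}}$ of \eqref{action-homF}, built from $\Delta_\F$ and $S_\F$ acting through $A_\F$, and this is \emph{not} the same linear map as $\trl_{\Der{A}}$ (only $\trl_{\Der{A}_\F}$ is). To cancel the two twists in $\dd^{-1}\circ\dd$ you must first move $\F^\alpha\trl_{\Der{A_\F}}$ past $\dd$, which is precisely the equivariance statement (iii); so (iii) must be established before (ii), or (ii) must be done by a direct expansion of \eqref{action-homF}. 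With that reordering, and noting that in step (i) the second term of the braided Leibniz rule should read $({\r_\F}_\alpha\trl a)\dotF\big({\r_\F}^\alpha\trl_{\Der{A_\F}}\dd(\psi)\big)(a')$ with the deformed product, your argument is the standard one and goes through.
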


This isomorphism extends as  algebra map to the
universal enveloping algebras
$$
\dd:\U(\Der{A}_\F)\to\U(\Der{A_\F})
$$
resulting into a braided Hopf algebra isomorphism.
We further have the braided Hopf algebras isomorphisms  (see  \cite[Prop.~5.18]{pgc-main})
$\U(\Der{A})_\F \simeq\U(\Der{A}_\F)\simeq\U(\Der{A_\F})$.

\begin{rem}
As shown in \cite{pgc-main}, the isomorphism $\dd:\Der{A}_\F \to
\Der{A_\F}$ is the restriction of a more general isomorphism 
$\dd:  ((\Hom(A,A)_\F, \circ_\F), [\, ,\, ]_\F)\to ((\Hom(A_\F,A_\F), \circ) ,[\, , \,]_{\rF})$. This 
result indeed holds in more generality for $A$ just a $K$-module  and not necessarily  a $K$-module algebra.
\end{rem}

As mentioned, 
when $A$ is quasi-commutative the $K$-braided Lie
algebra $\Der{A}$ has an $A$-module structure defined in \eqref{AmodderA}
that is compatible with the Lie bracket of $\Der{A}$.

The $K_\F$-braided Lie
algebra $\Der{A}_\F$ has $A_\F$-module
structure  
\begin{equation}\label{acdotFD}
  a\cdot_\F\psi:=(\bF^{\alpha}
  \trl_A a)\, (\bF_{\alpha}\trl_{\Der{A}} \psi) \, ,
\end{equation}
for all $\psi \in \Der{A}_\F$ and $a\in A_\F$.
The compatibility of the braided bracket with this module structure then, for all $\psi,\psi'\in \Der{A}_\F$, $a,a'\in A_\F$, reads 
\begin{align}\label{LieAmodF+}
  [a\cdot_\F\psi , &
  a'\cdot_\F\psi']_\F= ~a\cdot_\F[\psi,a']_\F\cdot_\F\psi'+
  a\cdot_\F({\r_\F}_\alpha\trl_{A_\F}
  a')\cdot_\F[{\r_\F}^\alpha \trl_{\Der{A}_\F} \psi,\psi']_\F \\ 
  &
  - {\r_\F}_\beta {\r_\F}_\alpha\trl_{A_\F} a' \cdot_\F[{\r_\F}_\delta {\r_\F}_\gamma\trl_{\Der{A}_\F} \psi'\,,\, {\r_\F}^\delta {\r_\F}^\beta \trl_{A_\F} a]_\F\cdot_\F
    {\r_\F}^\gamma  {\r_\F}^\alpha\trl_{\Der{A}_\F} \psi~. \nn
\end{align}
Here an element in $A$ is thought as a linear map $A\to A$ given by left multiplication. Then 
 $[\psi,a]_\F =
  [\bF^\alpha\trl_{\Der{A}} \psi,\bF_\alpha\trl_A a]=
 (\bF^\alpha\trl_{\Der{A}}
 \psi)(\bF_\alpha\trl_A a)$.

Also the $K_\F$-braided Lie
algebra  $\Der{A_\F}$ has compatible $A_\F$-module structure. With the product $\dotF$ in \eqref{rmod-twist} 
this is given as in \eqref{AmodderA} by
\beq\label{LieAFmod}
(a \dotF \psi)(a')=a \dotF \psi(a') 
\eeq
for any $a,a'\in A_\F,\psi \in \Der{A_\F}$.

The isomorphism $\dd:\Der{A}_\F\to \Der{A_\F}$  respects the  $A_\F$-module
structures:

\begin{cor}\label{DAmodiso}
If the $K$-module algebra $A$ is quasi-commutative, the braided Lie
algebra isomorphism $\dd: (\Der{A}_\F , [\, ,\, ]_\F )\rightarrow (\Der{A_\F} , [\, ,\, ]_{\rF})$ of Theorem \ref{thm:Dalg} is also an isomorphism of
the $A_\F$-modules $\Der{A}_\F$ and $\Der{A_\F}$:
$$
\dd (a \cdot_\F\psi) = a \dotF \dd(\psi) ,
$$
for $a\in A_\F$ and $\psi \in \Der{A}_\F$.
\end{cor}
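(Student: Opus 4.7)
The plan is to prove the $A_\F$-linearity of $\dd$ by embedding $A$ into $\Hom(A,A)$ via left multiplication $L:a\mapsto L_a$, where $L_a(a'):=aa'$, and reducing the claim to the algebra-homomorphism property of the more general map $\dd:(\Hom(A,A)_\F,\circ_\F)\to(\Hom(A_\F,A_\F),\circ)$ recorded in the Remark after Theorem~\ref{thm:Dalg}. The key observation is that the $A$-action on $\Der{A}$ defined in \eqref{AmodderA} coincides with the restriction to $\Der{A}$ of composition by left multiplications, and similarly the $A_\F$-action on $\Der{A_\F}$ from \eqref{LieAFmod} coincides with composition by left multiplications $L^\F_a$ in $A_\F$, where $L^\F_a(a'):=a\dotF a'$. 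Thus the corollary reduces to two identifications: (a)~$a\cdot_\F\psi=L_a\circ_\F\psi$ in $\Hom(A,A)_\F$, and (b)~$\dd(L_a)=L^\F_a$.

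First I would verify the preliminary fact that $L$ is $K$-equivariant, i.e.\ $L_{k\trl_A a}=k\trl_{\Hom(A,A)} L_a$, which follows from \eqref{action-hom}, the $K$-module algebra property of $A$, and the antipode identity $\one{k}S(\two{k})=\cun(k)\,1$. For identification (a), I apply \eqref{acdotFD}, this $K$-equivariance, and the definition \eqref{circF} of $\circ_\F$:
\begin{equation*}
a\cdot_\F\psi=(\bF^\alpha\trl_A a)(\bF_\alpha\trl_{\Der{A}}\psi)=L_{\bF^\alpha\trl_A a}\circ(\bF_\alpha\trl\psi)=(\bF^\alpha\trl L_a)\circ(\bF_\alpha\trl\psi)=L_a\circ_\F\psi~.
\end{equation*}
For identification (b), I use the formula \eqref{mappaD} defining $\dd$ together with the same $K$-equivariance of $L$:
\begin{equation*}
\dd(L_a)(a')=(\bF^\alpha\trl L_a)(\bF_\alpha\trl_A a')=(\bF^\alpha\trl_A a)(\bF_\alpha\trl_A a')=a\dotF a'=L^\F_a(a')~.
\end{equation*}

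Combining (a), (b), and the algebra-homomorphism property $\dd(\phi\circ_\F\psi)=\dd(\phi)\circ\dd(\psi)$ supplied by the Remark, I conclude
\begin{equation*}
\dd(a\cdot_\F\psi)=\dd(L_a\circ_\F\psi)=\dd(L_a)\circ\dd(\psi)=L^\F_a\circ\dd(\psi)=a\dotF\dd(\psi)~,
\end{equation*}
as required. The only substantive step is the $K$-equivariance of $L$; after that, identifications (a) and (b) are immediate from the definitions, and the remainder is a direct application of the already-established algebra isomorphism. No further manipulation of $\bF$-sums or of the twisted universal $R$-matrix $\r_\F$ is needed.
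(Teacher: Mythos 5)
Your proof is correct and follows essentially the route the paper intends: the corollary is stated without an explicit proof, but the sentence immediately preceding it ("an element in $A$ is thought as a linear map $A\to A$ given by left multiplication") together with the Remark on $\dd:(\Hom(A,A)_\F,\circ_\F)\to(\Hom(A_\F,A_\F),\circ)$ is precisely the reduction you carry out via the identifications $a\cdot_\F\psi=L_a\circ_\F\psi$ and $\dd(L_a)=L^\F_a$. Your verification of the $K$-equivariance of $L$ and the observation that quasi-commutativity enters only to guarantee that both module actions preserve the subspaces of braided derivations are both sound.
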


Next, let $B \subseteq A$ be a $K$-equivariant Hopf--Galois
  extension. We use the above isomorphisms for
the  $K$-braided Lie algebra of derivations $\Der{A}$ and its braided subalgebras 
$\DerH{A}$ and  $\aut{A}$  defined in
\eqref{LieGG} and in \eqref{BLieGG}.

The $K$-braided Lie algebras $(\aut{A}, [~,~])\subseteq (\DerH{A}, [~,~])$
are twisted to the
$K_\F$-braided Lie algebras $(\aut{A}^{}_\F,[~,~]_\F)\subseteq (\DerH{A}_\F, [~,~]_\F)$ with bracket
$[~,~]_\F$. These are braided Lie subalgebras of $(\Der{A}_\F, [~,~]_\F)$. We can equivalently consider the $K_\F$-braided Lie
algebras $(\autF{A_\F},[~,~]_{\r_\F})\subseteq (\DerHF{A_\F}, [~,~]_{\r_\F})$ that are braided Lie subalgebras of $(\Der{A_\F}, [~,~]_{\r_\F})$.
These are isomorphic, \cite[Prop. 8.1]{pgc-main}: 
\begin{prop}\label{autautF}
The isomorphism $\dd: (\Der{A}_\F, [~,~]_\F){\, \to \,} (\Der{A_\F}, [~,~]_{\r_\F})$ of braided
Lie algebras in Theorem \ref{thm:Dalg} restricts to 
isomorphisms
$\dd: \DerH{A}_\F{\, \to \,}\DerHF{A_\F}$ and
$\dd: \aut{A}^{}_\F{\, \to \,}\autF{A_\F}$ 
of $(K_\F,\r_\F)$-braided Lie algebras.
\end{prop}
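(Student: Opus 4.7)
The strategy is to exploit Theorem~\ref{thm:Dalg}: since $\dd$ is already established as a braided Lie algebra isomorphism on the full spaces $\Der{A}_\F$ and $\Der{A_\F}$, it suffices to verify that $\dd$ and $\dd^{-1}$ both map the distinguished subspaces into each other. Once the set-theoretic inclusions $\dd(\DerH{A}_\F)\subseteq \DerHF{A_\F}$, $\dd(\aut{A}_\F)\subseteq \autF{A_\F}$ and their $\dd^{-1}$-counterparts are in place, the restrictions are automatically $(K_\F,\r_\F)$-braided Lie algebra isomorphisms, since the brackets $[~,~]_\F$ and $[~,~]_{\r_\F}$ used on the subspaces are just the restrictions of the ambient brackets.

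The first preparatory observation I would establish is that $\DerH{A}$, $\aut{A}$ are $K$-submodules of $\Der{A}$, and that $B=A^{coH}$ is a $K$-submodule of $A$. The last follows from the compatibility \eqref{compatib}: if $\delta(b)=b\ot 1_H$ then $\delta(k\trl b)=k\trl b\ot 1_H$. Using this and the definition \eqref{action-der} of $\trl_{\Der{A}}$, one checks that $H$-equivariance of $\psi$ is preserved by $k\trl_{\Der{A}}\psi$, and that if $\psi|_B=0$ then also $(k\trl_{\Der{A}}\psi)|_B=0$. Since the underlying $K_\F$-modules $\DerH{A}_\F$ and $\aut{A}_\F$ coincide as $\bbK$-modules with $\DerH{A}$ and $\aut{A}$, this will make the subsequent computations permissible.

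The key step is a direct computation. Given $\psi\in \DerH{A}_\F$, formula \eqref{mappaD} yields $\dd(\psi)(a)=(\bF^\alpha\trl_{\Der{A}}\psi)(\bF_\alpha\trl_A a)$. Since the $H$-coaction on $A_\F$ is the same linear map as that on $A$ (the twist deforms only the $K$-structure), I would compute
\begin{align*}
\delta(\dd(\psi)(a)) &= \delta\bigl((\bF^\alpha\trl_{\Der{A}}\psi)(\bF_\alpha\trl_A a)\bigr)\\
&=(\bF^\alpha\trl_{\Der{A}}\psi)\bigl(\zero{(\bF_\alpha\trl_A a)}\bigr)\ot \one{(\bF_\alpha\trl_A a)}\\
&=(\bF^\alpha\trl_{\Der{A}}\psi)(\bF_\alpha\trl_A \zero{a})\ot \one{a}=\dd(\psi)(\zero{a})\ot\one{a},
\end{align*}
using in the second equality the $H$-equivariance of $\bF^\alpha\trl_{\Der{A}}\psi$ (from the first step) and in the third equality the compatibility \eqref{compatib}. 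Hence $\dd(\psi)\in \DerHF{A_\F}$. If further $\psi\in\aut{A}_\F$, then for $b\in B=A_\F^{coH}$ one has $\bF_\alpha\trl_A b\in B$ by $K$-stability of $B$, and since $\bF^\alpha\trl_{\Der{A}}\psi$ vanishes on $B$, we get $\dd(\psi)(b)=0$, so $\dd(\psi)\in \autF{A_\F}$.

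Finally, applying the same argument to the inverse map \eqref{mappaDinv} yields the reverse inclusions $\dd^{-1}(\DerHF{A_\F})\subseteq \DerH{A}_\F$ and $\dd^{-1}(\autF{A_\F})\subseteq \aut{A}_\F$, proving bijectivity of the restrictions. The braided Lie algebra structure is inherited from the ambient isomorphism of Theorem~\ref{thm:Dalg}. The main bookkeeping difficulty—and the only real obstacle—is to keep straight that the $H$-coaction, the coinvariants $B$, and the underlying $\bbK$-modules of $\Der{A}$, $\DerH{A}$, $\aut{A}$ are unchanged under the twist, while the ambient $K_\F$-action, product $\dotF$, composition $\circ_\F$ and braiding $\r_\F$ all deform; once this is clearly set up, the verification reduces to routine manipulation of the twist elements $\F,\bF$.
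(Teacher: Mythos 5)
Your proof is correct and complete; note that the paper itself does not prove this proposition but simply recalls it from \cite[Prop.~8.1]{pgc-main}, so your argument is a reconstruction rather than a comparison. The route you take — first checking that $B$, $\DerH{A}$ and $\aut{A}$ are $K$-stable, then pushing the unchanged $H$-coaction through $\dd(\psi)(a)=(\bF^\alpha\trl\psi)(\bF_\alpha\trl a)$ via \eqref{compatib}, and handling $\dd^{-1}$ symmetrically — is exactly the standard one, and you correctly identify the only real subtlety, namely that the coaction, the coinvariants and the underlying $\bbK$-modules are untouched by the twist (consistent with the paper's hypothesis that $H$ carries the trivial $K$-action, so $H_\F=H$).
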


In \S \ref{sec:examples} we work out the braided Lie algebra of  equivariant
derivations and of infinitesimal gauge transformations for 
two important examples of principal bundles over the noncommutative
$4$-sphere $S^4_\theta$ of \cite{cl01}. 
 We use the general theory developed in this section to
obtain the braided Lie algebra of  equivariant derivations and of
infinitesimal gauge transformations of these noncommutative bundles
from their classical counterparts.

These noncommutative principal bundles are $*$-algebra extensions
(which can be completed to $C^*$-algebras).
The $*$-structures canonically lift to the Lie algebras of braided
derivations and of gauge transformations. 
Before considering these examples, in the next
section we proceed with a systematic analysis of $*$-structures for
braided Hopf and Lie algebras.
The results of \S \ref{sec:examples} are however presented in a
  self contained way so that  \S  \ref{sec:*} might be skipped in a
first reading.

\section{Braided Hopf and Lie $*$-algebras}\label{sec:*}

In this section the ground field is $\bbK=\mathbb{C}$.
We present a study of compatibility conditions for
defining $*$-structures on Hopf algebras and their representations.
The study of braided Hopf $*$-algebra actions on braided $*$-algebras
associated with quasitriangular Hopf algebras is new to the best of
our knowledge.

A $*$-structure  on a vector space $V$ is an antilinear involution
$*:V\to V$, $v \mapsto v^*$, on an
algebra $A$ one also requires $*:A\to A$ to be antimultiplicative. A $*$-structure on a Hopf
algebra $K$ is a $*$-structure on the
algebra $K$ that satisfies $\Delta(k^*)=\Delta(k)^{*\otimes *}$ for
all $k\in K$; it then follows that $\varepsilon(k^*)=\overline{\varepsilon(k)}$ and $(S\circ *)^2=\id$. In particular $S$ is invertible, with $S^{-1}=  * \circ  S\circ *$.
If $V$ is a $K$-module with a $*$-structure, one requires the compatibility condition
\begin{equation}\label{trlstar}
(k\trl_V v)^*=S^{-1}(k^*)\trl_V v^*
\end{equation}
 for all $k\in K, v \in V$.    
This  condition  is well defined: it is equivalent to require that $\succ_V:
  K\otimes V\to V$,  defined by
\begin{equation}\label{succ}
   k\suc_V v:= (S^{-1}(k^*)\trl_V v^*)^* , 
\end{equation}
is an action of the Hopf algebra $K$ on  $V$ that coincides with the starting one $ \trl_V$. Indeed
\begin{align}  \label{Kact} 
k\suc (h\suc v)&=k\suc(S^{-1}(h^*)\trl v^*)^*=(S^{-1}(k^*)S^{-1}(h^*)\trl v^*)^*
  =kh\suc v  .
\end{align}
The condition \eqref{trlstar}  is also required for $A$ a $K$-module $*$-algebra. In this case \eqref{trlstar} is also well defined with respect to 
the multiplication in $A$. 
Indeed, $k \trl_L 1_A = \varepsilon (k)1_A$ and \begin{align}
  (\one{k}\suc a)(\two{k}\suc b)&=\big((S^{-1}(\two{k}^*)\trl
                                    b^*)(S^{-1}(\one{k}^*)\trl
                                    a^*)\big)^*\nn\\
  &=
    \big(S^{-1}({k}^*)\trl (b^*a^*)\big)^* =k\suc (ab)~\label{KonAlg}
  \end{align}
for all $k,h\in K, \,a, b\in A$. Here and in the following to lighten the
  notations we frequently omit the subscript on the actions. 
 \begin{ex}
 Any Hopf $*$-algebra $K$ with adjoint action  $K\otimes K\to
 K$, $k\!\!\:\trl\!\!\: k'=\one{k}k'S(\two{k})$ is a $K$-module $*$-algebra. 
This motivates the definition \eqref{trlstar} following the
conventions  in \cite{KS} rather than those in \cite{Majid}.
\end{ex}
 \begin{ex}
 Condition \eqref{trlstar} is dual to that for a comodule $*$-algebra. 
Given a $*$-algebra $A$ which is a right comodule algebra for a Hopf $*$-algebra $\mathcal{U}$, with coaction
$\delta: A \to A \ot \mathcal{U}$, $a \mapsto \zero{a} \ot \one{a}$, one requires $\delta(a^*)= (\zero{a})^* \ot (\one{a})^*$.   If $K$ and $\mathcal{U}$ are dually paired Hopf $*$-algebras, one has $\langle k, u^* \rangle = \overline{\langle S^{-1}(k^*), u \rangle}$.  Then $A$ is a module $*$-algebra with $K$-action $k \trl a = \zero{a} \langle k, \one{a} \rangle$ satisfying \eqref{trlstar}.
 \end{ex}
In the present paper we deal with braided Hopf and Lie
algebras associated with a (quasi)triangular Hopf algebra $(K, \r)$. 

When $K$ is quasitriangular we require its $R$-matrix  to be
  antireal, that is, $\r^{*\otimes *}=\br$. When $K$ is triangular
  this condition coincides  with the reality condition  $\r^{*\otimes *}=\r_{21}$. 

\subsection{Braided Hopf $*$-algebras}\label{sec:*h}
In this section we take $(K, \r)$ quasitriangular with $\r$ antireal. We use the braiding $\Psi_\r:L\bot L\to L\bot L, \Psi_\r(x\bot
  y)=\r_\alpha \trl_L y\bot \r^\alpha\trl_L x$, in \eqref{braiding} to induce the 
$*$-structure from a $K$-module $*$-algebra $L$ to the $K$-module algebra $L\bot L$ defined 
in \S\ref{se:bha} (and to more general tensor products of $K$-module $*$-algebras).

\begin{lem} \label{lem:starbot}
Given a $K$-module $*$-algebra $L$, the $K$-module algebra $L \bot L$
with product $\tpr$ in \eqref{tpr} and involution $(x \bot y)^*
  = \Psi_\r(y^*\bot x^*)$, that is, 
\beq\label{starbot}
(x \bot y)^* =(\r_\alpha \trl_L x^*) \bot (\r^\alpha \trl_L y^*)~,
\eeq 
 for $x,y\in L$,
is a $K$-module $*$-algebra.
\end{lem}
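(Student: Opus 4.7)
The plan is to establish three properties of the map $x\bot y\mapsto (x\bot y)^*$: that it is an antilinear involution on $L\bot L$, that it is antimultiplicative with respect to the braided product $\tpr$, and that it satisfies the module condition \eqref{trlstar} for the action $\trl_{L\bot L}$ of \eqref{probot}. The organising observation is that the proposed involution is precisely $* = \Psi_\r \circ \tau \circ (*\otimes *)$ on $L\otimes L$, with $\tau$ the flip. Antilinearity is then automatic, and each remaining check reduces to manipulating $R$-matrix identities together with the module $*$-algebra identity $(k\trl_L \ell)^* = S^{-1}(k^*)\trl_L \ell^*$ of $L$.

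For the involution property, I would apply $*$ twice to $x\bot y$ and push each conjugation through the $\trl$-action via the module $*$-algebra property. The two $R$-matrix factors collect into $\r_{21}\cdot (S^{-1}\otimes S^{-1})(\br_{21})$ acting on $x\otimes y$. Antireality $\r^{*\otimes *}=\br$ together with $(S\otimes S)\r=\r$ — which gives $(S^{-1}\otimes S^{-1})\br=\br$, and hence $(S^{-1}\otimes S^{-1})(\br_{21})=\br_{21}$ — collapses this to $\r_{21}\br_{21}=\tau(\r\br)=1\otimes 1$, as required.

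The main obstacle is antimultiplicativity. Expanding $\bigl((x\bot y)\tpr(x'\bot y')\bigr)^*$ produces one $\r$ from $\tpr$ and another from the outer $*$; pushing the $*$ inside products via the module $*$-algebra property converts the conjugated $\r$ into $\br$ and introduces $S^{-1}$ factors. The right hand side $(x'\bot y')^*\tpr(x\bot y)^*$ contributes three $\r$'s, one per involution and one from $\tpr$. Collecting the $K$-elements acting on $x'^*, x^*, y'^*, y^*$ in both expressions, the claim reduces to an identity in $K^{\otimes 4}$. The left hand side is built from $(\Delta\otimes\Delta)(\r_{21})$ together with antipode factors controlled by antireality; the hexagon identities \eqref{iii} expand $(\Delta\otimes\Delta)(\r_{21})$ and the quasi-cocommutativity \eqref{iiR} shuffles $\r$'s past coproducts so as to match the right hand side. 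The technical difficulty is the bookkeeping for four independent copies of $\r$ together with two coproducts.

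Finally, for the $K$-equivariance of $*$ I would expand $\bigl(k\trl_{L\bot L}(x\bot y)\bigr)^*$ using the module $*$-algebra property on $L$ together with $\Delta(k^*)=\Delta(k)^{*\otimes *}$, and expand $S^{-1}(k^*)\trl_{L\bot L}(x\bot y)^*$ using the anti-coalgebra relation $\Delta\circ S^{-1}=(S^{-1}\otimes S^{-1})\circ\tau\circ\Delta$. The two sides then differ only in the position of $\r_{21}$ relative to $(S^{-1}\otimes S^{-1})\Delta(k^*)=\Delta^{op}(S^{-1}(k^*))$, and the required commutation is precisely the quasi-cocommutativity \eqref{iiR} in the form $\r_{21}\Delta^{op}(h)=\Delta(h)\r_{21}$.
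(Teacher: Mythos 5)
Your proposal is correct and follows essentially the same route as the paper: antireality plus $(S\otimes S)\r=\r$ for the involution, the module $*$-algebra property \eqref{trlstar} together with the hexagon identities \eqref{iii} for antimultiplicativity, and quasi-cocommutativity \eqref{iiR} for the compatibility of $*$ with the $K$-action. The only minor differences are that the paper's explicit antimultiplicativity computation closes using only \eqref{iii} and the cancellation $\r_{21}\br_{21}=1\otimes 1$, without needing \eqref{iiR} to shuffle $R$-matrices past coproducts, and that the paper merely asserts the involution property which you spell out correctly.
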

\begin{proof}
 The matrix $\r$ being antireal   implies
that \eqref{starbot} is an involution. 
It is antimultiplicative: using  \eqref{trlstar} for $\trl_L$
together with antireality and properties \eqref{ant-R} of the $\r$-matrix
we compute (omitting the subscript on the actions)
\begin{align*}
\left((x \bot   y) \tpr (  x' \bot   y')\right)^* 
&=
\left(  x(\r_\alpha \trl   x') \bot (\r^\alpha \trl   y)   y' \right)^*
\\
& =
\r_\beta \trl \left( x(\r_\alpha \trl   x' ) \right)^*  \bot \r^\beta \trl  \left((\r^\alpha \trl y)   y' \right)^*
\\
& =
\r_\beta \trl \left( (\r_\alpha \trl   x' )^* x^* \right)   \bot \r^\beta \trl  \big({ y'}^* (\r^\alpha \trl y)^*    \big)
\\
& =
\r_\beta \trl \left( (\br_\alpha \trl   {x'}^*) x^* \right)   \bot \r^\beta \trl  \big({ y'}^*\:\! \br^\alpha \!\!\:\trl y^*    \big)
\\
& =
 (\one{\r_\beta}  \br_\alpha \trl   {x'}^* ) (\two{\r_\beta}\trl  x^*)    
 \bot 
  ( \one{\r^\beta} \trl { y'}^* )( \two{\r^\beta}  \br^\alpha \trl y^*   )
\\
& =
 ({\r_\beta} \r_\tau \br_\alpha \trl   {x'}^* ) ( {\r_\gamma} \r_\mu \trl  x^*)    
 \bot 
  ( \r^\gamma {\r^\beta} \trl{ y'}^* )(  \r^\mu \r^\tau  \br^\alpha \trl y^*   )
\\
&=( \r_\beta \trl {x'}^*) (\r_\gamma \r_\mu \trl x^*) \bot (\r^\gamma \r^\beta \trl {y'}^* )( \r^\mu \trl y^* \big) 
\\
&=\big( \r_\beta \trl {x'}^* \bot \r^\beta \trl {y'}^* \big) \tpr  \big( \r_\mu \trl x^* \bot \r^\mu \trl y^* \big) 
\\
& = (  x' \bot   y')^*\tpr (x \bot   y) ^*~.
\end{align*}
Finally, we show the compatibility condition \eqref{trlstar} between the $*$-structures. Recalling that $L \bot L$  has action \eqref{probot}, we compute
\begin{align}
(k \trl (x \bot y))^* 
&=  (\one{k} \trl x \bot \two{k} \trl y)^*
\nn\\
&=  \r_\alpha \trl (\one{k} \trl x)^* \bot \r^\alpha \trl (\two{k} \trl y)^*
\nn\\
&=  \r_\alpha S^{-1}(\one{k}^*) \trl x^* \bot \r^\alpha S^{-1}(\two{k}^*) \trl y^*
\nn\\
&=  \r_\alpha \two{(S^{-1}(k^*))} \trl x^* \bot \r^\alpha \one{(S^{-1}(k^*))} \trl y^*
\nn\\
&=   \one{(S^{-1}(k^*))} \r_\alpha \trl x^* \bot   \two{(S^{-1}(k^*))} \r^\alpha\trl y^*
\nn\\
&=   S^{-1}(k^*) \trl \big( \r_\alpha \trl x^* \bot    \r^\alpha\trl y^* \big)
\nn\\
&=   S^{-1}(k^*) \trl \big(  x \bot     y \big)^* \label{Rsucc}
\end{align}
where for the third last equality we used the quasi-cocommutative condition \eqref{iiR}.
\end{proof}
  Considering the last and third expression in \eqref{Rsucc} one has
  $$
 \left[ S^{-1}(k^*) \trl \big(  x \bot     y \big)^* \right]^*  =  \left[ \r_\alpha S^{-1}(\one{k}^*) \trl x^* \bot \r^\alpha S^{-1}(\two{k}^*) \trl y^* \right]^*
  $$
and, recalling the action \eqref{succ}, this reads 
$k\suc_{L\bot L} (x\bot y)=\one{k}\suc_L x\,\bot\, \two{k}\suc_L y$. 
 This
  proves that $\suc_{L \bot L}$ is an action of the coalgebra $K$ on $L\bot L$.
\begin{lem} The action $\suc : K\otimes L\to L$ commutes with
the braiding isomorphism $\Psi_\r:L\bot L\to L\bot L$, $\Psi_\r(x\bot y)=\r_\alpha\trl
  y\bot\r^\alpha\trl x$ of the original  $K$-action $\trl$,
 \begin{equation}\label{Psisuc}
    k\!\suc \!(\Psi_\r(x\bot y))=\Psi_\r(
    k\!\suc\! (x\bot y))~,
    \end{equation}for
  all $k\in K, x, y\in L$. 
\end{lem}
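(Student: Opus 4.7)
The plan is to reduce the statement \eqref{Psisuc} to the already recorded commutativity of the canonical tensor product action $\trl_{L\bot L}$ with the braiding $\Psi_\r$. The first step is to observe that for any $K$-module $*$-algebra $L$ the compatibility \eqref{trlstar} forces $\suc_L$ and $\trl_L$ to coincide on $L$: rewriting \eqref{trlstar} as $k\trl_L v = (S^{-1}(k^*)\trl_L v^*)^*$ and comparing with the definition \eqref{succ} gives $k\trl_L v = k\suc_L v$, which is exactly the equivalence noted in the paragraph following \eqref{succ}.

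Next I would combine this identity with the formula $k\suc_{L\bot L}(x\bot y) = \one{k}\suc_L x \bot \two{k}\suc_L y$ derived in the closing remark of the proof of Lemma \ref{lem:starbot}. Since $\suc_L = \trl_L$, the right-hand side equals $\one{k}\trl_L x \bot \two{k}\trl_L y$, which is the tensor product action $\trl_{L\bot L}$ of \eqref{probot}. Thus $\suc_{L\bot L}$ and $\trl_{L\bot L}$ agree as maps $K\otimes (L\bot L)\to L\bot L$.

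The conclusion \eqref{Psisuc} then follows at once from the general observation recalled just after \eqref{probot}: for any two $K$-modules, the induced $K$-action on their tensor product commutes with the braiding $\Psi_\r$, as a consequence of the quasi-cocommutativity \eqref{iiR} of the $R$-matrix. Applied with $V=W=L$ this yields $\trl_{L\bot L}\circ\Psi_\r = \Psi_\r\circ \trl_{L\bot L}$, and the identification $\suc_{L\bot L}=\trl_{L\bot L}$ established in the previous step transports this commutation into $\suc_{L\bot L}\circ\Psi_\r = \Psi_\r\circ \suc_{L\bot L}$, which is \eqref{Psisuc}.

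There is no substantial obstacle: the essential point is simply recognizing that on a $K$-module $*$-algebra the two actions $\trl$ and $\suc$ coincide, after which the lemma is an immediate corollary of material already developed in \S \ref{se:bha}. A self-contained direct verification is also available: one would unfold $k\suc v = (S^{-1}(k^*)\trl v^*)^*$ on both sides of \eqref{Psisuc} and appeal to quasi-cocommutativity in the form $\one{k}\r_\alpha\otimes \two{k}\r^\alpha = \r_\alpha\two{k}\otimes \r^\alpha\one{k}$, but this is merely a longer route to the same conclusion.
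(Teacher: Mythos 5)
Your argument is correct, but it is genuinely different from the one in the paper. The paper proves \eqref{Psisuc} by a direct computation: it unfolds $k\suc(\Psi_\r(x\bot y))=[S^{-1}(k^*)\trl(\Psi_\r(x\bot y))^*]^*$ using the $*$-structure \eqref{starbot} on $L\bot L$, the auxiliary identity \eqref{bracketR*} (itself a consequence of \eqref{trlstar}, antireality of $\r$ and \eqref{ant-R}), and the quasi-cocommutativity \eqref{iiR}. You instead observe that the compatibility \eqref{trlstar}, which is part of the definition of a $K$-module $*$-algebra and is also what the paper's own proof uses via \eqref{bracketR*}, already forces $\suc_L=\trl_L$; combined with the identity $k\suc_{L\bot L}(x\bot y)=\one{k}\suc_L x\bot\two{k}\suc_L y$ (or, even more directly, with the fact that Lemma \ref{lem:starbot} establishes \eqref{trlstar} for $L\bot L$, hence $\suc_{L\bot L}=\trl_{L\bot L}$), the lemma collapses to the commutation $\trl_{L\bot L}\circ\Psi_\r=\Psi_\r\circ\trl_{L\bot L}$ already recorded after \eqref{probot}. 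This is a legitimate and shorter route: it exposes the lemma as a formal consequence of earlier material rather than a new computation. What the paper's explicit calculation buys is a self-contained verification whose intermediate steps (notably \eqref{bracketR*}, i.e.\ the interplay of $*\otimes *$ with the $R$-matrix) are reused in later, genuinely nontrivial arguments such as the proof of Proposition \ref{prop:*actionbH}, where no analogous collapse of $\suc$ onto a previously studied action is available.
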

\begin{proof}
We first compute, using the  antireality of the $R$-matrix,
  \begin{align} \label{bracketR*}
(\r^\alpha \trl x )^* \bot (\r_\alpha\trl y)^* 
&=  S^{-1} ({\r^{\alpha\:\!}}^*) \trl x^*\bot S^{-1}({\r_\alpha}^{\!*}) \trl y^*
\\
&
=  S^{-1} (\br^\alpha) \trl x^*\bot S^{-1}(\br_\alpha)  \trl y^*
=   \br^\alpha \trl x^*\bot  \br_\alpha  \trl y^* ,
 \end{align}
for all $x,y\in \g$. This and the $*$-structure of $L \bot L$ in \eqref{starbot} prove
the lemma:
  \begin{align*}
  k\!\suc \!(\Psi_\r(x\bot y))&
                                   = k\suc(\r_\alpha \trl y\!\:\bot\!\:
                                \r^\alpha\trl x)\\
                              &=[S^{-1}(k^*) \trl(\r_\alpha\trl
                                y\!\:\bot\!\:\r^\alpha\trl x )^*]^*\\
                              &=[S^{-1}(k^*)\trl(\r_\beta\trl(\r_\alpha\trl
                                y)^*\!\:\bot\!\:\r^\beta\trl(\r^\alpha\trl
                                x)^*)]^*\\
                                 &=[S^{-1}(\two{k}^*)\trl y^*\bot
                                   S^{-1}(\one{k}^*)\trl x^*]^*\\
  &=\r_\alpha\trl (S^{-1}(\two{k}^*)\trl y^*)^*\bot
    \r^\alpha\trl(S^{-1}(\one{k}^*)\trl x^*)^*\\
  &=\r_\alpha\trl (\two{k}\suc y)\bot
    \r^\alpha\trl(\one{k}\suc x)\\
  &=\Psi_\r(\one{k}\suc x\!\:\bot\!\: \two{k}\suc y)~.\\[-3em]
  \end{align*}
\end{proof}

\begin{defi}\label{*HA} A $*$-structure on a  braided Hopf  algebra 
 $L$  associated with $(K,\r)$ is a $*$-structure on the $K$-module
 algebra $L$ such that the braided coproduct $\Delta_L: L\to L \bot L$
 is a $*$-algebra map,
$\Delta_L(x^*) = (\Delta_L (x))^*$ with $*$-structure on $L \bot L$ in \eqref{starbot}. 
\end{defi}

The $*$-algebra map condition for $\Delta_L$  is equivalent to $\Delta_L = *\circ\Delta_L\circ *$. This is well defined
since $*\circ\Delta_L\circ *: L\to  L \bot L$ is a coassociative $K$-module map and
an algebra map. The $K$-equivariance follows from that of
$\Delta_L$ and the compatibility \eqref{trlstar}. The $*$-algebra map
property is straighforward and coassociativity is verified by direct computation.

The (braided) antipode $S_L$  of a braided Hopf $*$-algebra satisfies $S_L \circ * \circ S_L \circ * =\id$ and so is invertible. 
Using \eqref{S-algmapB},  $\Delta_L \circ S_L = (S_L \ot S_L ) \circ \Psi_\r \circ \Delta_L$, one gets:
$$
\Delta_L \circ S_L^{-1} = (S_L^{-1} \ot S_L^{-1} ) \circ \overline{\Psi}_\r \circ \Delta_L   
$$
with $\overline{\Psi}_\r$ the inverse of $\Psi_\r$: 
$ \overline{\Psi}_\r(x\ot y)=\br^\alpha \trl_L y\ot \br_\alpha\trl_L x$, for $x,y \in L$. Then, using \eqref{starbot},   
one gets
$$
\Delta_L (S_L(x^*)) = \r_\alpha \r^\beta \trl S_L((\two{x})^*) \bot \r^\alpha \r_\beta \trl S_L((\one{x})^*)  \; ,
$$
for each $x \in L$, together with 
\beq\label{eqS*}
\Delta_L (S^{-1}_L(x^*)) =  S^{-1}_L((\two{x})^*) \bot  S^{-1}_L((\one{x})^*)  \;  .
\eeq
 A braided Hopf $*$-algebra $L$ acts on a $K$-module
$*$-algebra $A$, with action $\btrl_A:L\otimes A\to A$ satisfying \eqref{LAbraidedaction},
if the
$*$-structure of $A$ satisfies the compatibility condition  
\beq \label{comp**}
( x \btrl_A a)^* =  (  \br^\alpha \trl_L S^{-1} (x^*) ) \btrl_A (\br_\alpha \trl_A a ^* )~ 
\eeq 
that generalizes condition \eqref{trlstar}.
\begin{prop} \label{prop:*actionbH}
The compatibility condition \eqref{comp**} is well-defined. 
\end{prop}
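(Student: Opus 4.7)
The plan is to mimic the treatment given after equation \eqref{trlstar} in this section: define a candidate operation
\begin{equation*}
x\suc_A a := \bigl((\br^\alpha\trl_L S^{-1}(x^*))\btrl_A(\br_\alpha\trl_A a^*)\bigr)^*
\end{equation*}
and to show that $\suc_A$ is itself a braided action of the braided Hopf $*$-algebra $L$ on the $K$-module $*$-algebra $A$, that is, it is $K$-equivariant, unital in both arguments, associative as an action of the algebra $L$, and satisfies the braided Leibniz rule \eqref{LAbraidedaction}. Once this is established, the identity \eqref{comp**} reads $\suc_A=\btrl_A$ and hence is a meaningful compatibility condition between the $*$-structures and $\btrl_A$.

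As a preliminary step, I would verify involutivity, namely that applying the prescription twice recovers the original $\btrl_A$; this relies on antireality $\br^{*\otimes *}=\r$, on the identity $S_L^{-1}=*\circ S_L\circ *$, the quantum Yang--Baxter equation, and on the compatibilities \eqref{trlstar} for the $K$-actions on $L$ and on $A$. The $K$-equivariance of $\suc_A$ follows by applying $k\in K$ to the defining formula and commuting it through $\br$ and $S^{-1}$ by means of \eqref{trlstar} and the quasi-cocommutativity \eqref{iiR}. The unit conditions $1_L\suc_A a=a$ and $x\suc_A 1_A=\cun(x)1_A$ follow from $1_L^*=1_L$, $\cun(x^*)=\overline{\cun(x)}$, and the corresponding properties of the original $\btrl_A$.

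The main work, and the expected obstacle, is the verification of the braided Leibniz rule \eqref{LAbraidedaction} for $\suc_A$. The strategy is to star both sides of the desired identity, so that it reduces to an equation involving $\btrl_A$ applied to the $*$-images of products and coproducts in $A$ and $L$. One then uses the antimultiplicativity of $*$ on $A$ to turn $(ab)^*$ into $b^*a^*$; equation \eqref{eqS*} to rewrite $\Delta_L(S_L^{-1}(x^*))$ as $S_L^{-1}((\two{x})^*)\bot S_L^{-1}((\one{x})^*)$; and the compatibility \eqref{starbot} of $*$ with the braided tensor product $\tpr$, together with the commutation \eqref{Psisuc} of $\suc_{L\bot L}$ with $\Psi_\r$, to move the braidings past the $*$-images. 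Finally, repeated application of the Yang--Baxter equation and of antireality of $\r$ cancels the surplus $\r$- and $\br$-factors against their inverses, and the identity closes by invoking the original braided action property \eqref{LAbraidedaction} for $\btrl_A$. The difficulty is purely combinatorial bookkeeping of the numerous $\r$, $\br$, $S_L^{-1}$ and $*$ that appear, but the compatibilities of $*$ with $\Delta_L$ (Def.~\ref{*HA}) and with the $K$-actions on $L$ and $A$ guarantee that all braidings pair up so that the identity closes.
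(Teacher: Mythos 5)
Your proposal follows essentially the same route as the paper: one checks that $\suc_A$ as you define it is a $K$-equivariant braided action of $L$ on $A$ (unitality from unitality of $\r$, the action property via the braided antimultiplicativity of $S_L^{-1}$ together with \eqref{iii} and the quantum Yang--Baxter equation, $K$-equivariance via \eqref{iiR} and equivariance of $S_L$, and the braided Leibniz rule via \eqref{eqS*} and antireality), so that \eqref{comp**} becomes the meaningful statement $\suc_A=\btrl_A$. The ingredients you list are exactly those the paper uses, and the argument closes as you describe.
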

\begin{proof}
 The  compatibility condition is equivalent to require that 
$$
x \suc_{{A}} a :=  \left[ (  \br^\alpha \trl_L S^{-1} (x^*) ) \btrl_A (\br_\alpha \trl_A a ^* ) \right]^* 
$$
is an action of the braided Hopf algebra $L$ on the $K$-module algebra $A$, which coincides with the starting action $\btrl_A$.
We need to show that the map $\suc$ defines a $K$-equivariant action that satisfies \eqref{LAbraidedaction}. 
 To lighten the notation we omit the subscript on the actions.  Firstly, $\suc$ is an action.
The condition $x \suc 1_A = \cun_L(x) 1_A$ follows from the unitality
of $\r$.   
Observing that the compatibility of the inverse
  antipode with the multiplication uses the inverse braiding,
$
S^{-1}_L (xy)  = ( \br^\alpha \trl_L  S^{-1}_L( y))(\br_\alpha \trl_L S^{-1}_L( x)) 
$,  cf. \eqref{S-algmapA},
we compute
\begin{align*}
 (xy) \suc  v  &=  \left[(  \br^\alpha \trl S^{-1} ( (xy)^* ) ) \btrl (\br_\alpha \trl v^*) \right]^*
\\
&=  \left[\left(  \br^\alpha \trl ((\br^\gamma \trl S^{-1} (x^*))(\br_\gamma \trl S^{-1} (y^*))  \right) \btrl (\br_\alpha \trl v^*) \right]^*
\\
&= \left[ \left(  (\one{\br^\alpha} \br^\gamma \trl S^{-1} (x^*))(\two{\br^\alpha} \br_\gamma \trl S^{-1} (y^*))  \right) \btrl (\br_\alpha \trl v^*) \right]^*
\\
&= \left[ \left(  ( \br^\beta \br^\gamma \trl S^{-1} (x^*))( \br^\alpha \br_\gamma \trl S^{-1} (y^*))  \right) \btrl (\br_\alpha \br_\beta \trl v^*) \right]^*.
\end{align*}
For the last equality we used the analogous of property \eqref{iii} for $\br$.
On the other hand
\begin{align*}
x \suc (y \suc v) &= x\suc   \left[  (  \br^\alpha \trl S^{-1} (y^*) ) \btrl (\br_\alpha \trl v^*) \right]^*   
\\
&=  \left[ (\br^\beta  \trl S^{-1}(x^*)) \btrl \left( \br_\beta \trl \left[ (  \br^\alpha \trl S^{-1} (y^*) ) \btrl (\br_\alpha \trl v^*)\right]  \right) \right]^* 
\\
&=  \left[ (\br^\beta  \trl S^{-1}(x^*))   ( \one{\br_\beta} \br^\alpha \trl S^{-1} (y^*) ) \btrl (\two{\br_\beta}\br_\alpha \trl v^*)    \right]^*   
\\
&=  \left[ (\br^\beta  \br^\gamma \trl S^{-1}(x^*)) (  {\br_\beta} \br^\alpha \trl S^{-1} (y^*) ) \btrl ( {\br_\gamma}\br_\alpha \trl v^*)    \right]^*   ,
\end{align*}
having used  property \eqref{iii}  again. The two expressions then coincide due to the quantum Yang--Baxter equation
$
\br_{12} \br_{13} \br_{23}= \br_{23} \br_{13}\br_{12} 
$. 
Next we show $K$-equivariance:
\begin{align*}
k \trl( x \suc v) &=  k \trl \left[ (  \br^\alpha \trl S^{-1} (x^*) ) \btrl (\br_\alpha \trl v ^* ) \right]^* 
\\
&= \left[S^{-1}(k^*) \trl \left[ (  \br^\alpha \trl S^{-1} (x^*) ) \btrl (\br_\alpha \trl v ^* ) \right] \right]^*
\\
&=  \left[ \big( \one{( S^{-1}(k^*)}    \br^\alpha) \trl S^{-1} (x^*) \big) \btrl \big( \two{( S^{-1}(k^*)}  \br_\alpha ) \trl v^* \big)  \right]^*
\\
&= \left[  \big(    \br^\alpha \two{ S^{-1}(k^*)}  \trl S^{-1} (x^*) \big) \btrl \big(  \br_\alpha \one{ S^{-1}(k^*)}  \trl v^* \big)  \right]^*
\\
&=  \left[  \big(    \br^\alpha   S^{-1}(\one{k}^*)  \trl S^{-1} (x^*) \big) \btrl \big(  \br_\alpha S^{-1}(\two{k}^*)  \trl v^* \big)  \right]^*
\\
&=\left[ (  \br^\alpha \trl S^{-1} (S^{-1}(\one{k}^*) \trl x^* )) \btrl (\br_\alpha   S^{-1}(\two{k}^* )\trl v^*)  \right]^* 
\\
&=\left[ (  \br^\alpha \trl S^{-1} ((\one{k} \trl x) ^* )) \btrl (\br_\alpha \trl  (\two{k} \trl v)^* ) \right]^* 
\\
& = (\one{k} \trl x) \suc (\two{k} \trl v) 
\end{align*}
where for the fourth equality we used the co-commutativity \eqref{iiR} of the coproduct of $K$ and for the sixth equality the $K$-equivariance of  the braided antipode $S$ of $L$.

 We are left  to show that  \eqref{LAbraidedaction} holds: using the same property for the action $\btrl$
  and property \eqref{eqS*} for the inverse of the braided antipode, we compute
\begin{align*}
& x \succ (a a')
\\
& = \left[ (  \br^\alpha \trl S^{-1} (x^*) ) \btrl \big( (\one{\br_\alpha} \trl {a'}^*)( \two{\br_\alpha} \trl a^* ) \big)\right]^* 
\\
& = \left[ \big[\one{  (  \br^\alpha \trl S^{-1} (x^*) \big)} \btrl   (\r_\gamma \one{\br_\alpha} \trl {a'}^*)\big] \big[   \big( \r^\gamma \trl \two{  \big(  \br^\alpha \trl S^{-1} (x^*) \big)}  \big) \btrl( \two{\br_\alpha} \trl a^* ) \big] \right]^* 
\\
& = 
\big[
 \big( \r^\gamma   \trl \two{\big(\br^\alpha \br^\mu \trl S^{-1} (x^*) \big)}  \big) \btrl  (\br_\mu \trl a^* ) \big]^* 
 \big[ \one{\big(  \br^\alpha \br^\mu \trl S^{-1} (x^*) \big)} \btrl  ( \r_\gamma \br_\alpha \trl {a'}^*) \big]^* 
\\
 & =  \big[  \big( \r^\gamma \two{\br^\alpha} \two{\br^\mu} \trl S^{-1} ({\one{x}}^*) \big)   \btrl ( \br_\mu \trl a^* ) \big) \big]^*  \big[ (\one{  \br^\alpha} \one{ \br^\mu} \trl S^{-1} ({\two{x}}^*) )  \btrl  (\r_\gamma \br_\alpha \trl {a'}^*) \big]^* 
\\
 & = \big[(\r^\gamma  \br^\alpha  {\br^\mu} \trl S^{-1} ({\one{x}}^*) )   \btrl ( \br_\mu \br_\tau \trl a^* ) \big)\big]^*  
\big[ (\br^\nu   \br^\tau \trl S^{-1} ({\two{x}}^*) )  \btrl  (\r_\gamma \br_\alpha \br_\nu \trl {a'}^*) \big]^*
 \\
 & = 
 \big[  ( {\br^\mu} \trl S^{-1} ({\one{x}}^*) )   \btrl ( \br_\mu \br_\tau \trl a^* ) \big)\big]^* 
  \big[ (\br^\nu   \br^\tau \trl S^{-1} ({\two{x}}^*) )  \btrl  (  \br_\nu \trl {a'}^*) \big]^*
 \end{align*}
 On the other hand, also using the antireality of $\r$,
\begin{align*}
&(\one{x}\succ (\r_\alpha \trl  a))\, ((\r^\alpha \trl \two{x})\succ a') 
\\
&=  \left[ (  \br^\mu \trl S^{-1} ({\one{x}}^*) ) \btrl (\br_\mu \trl (\r_\alpha \trl  a)^*  \right]^* 
 \left[ (  \br^\nu \trl S^{-1} ((\r^\alpha \trl \two{x})^*) ) \btrl (\br_\nu \trl a{'}^* ) \right]^* 
 \\
 &=  \left[ (  \br^\mu \trl S^{-1} ({\one{x}}^*) ) \btrl (\br_\mu \br_\alpha \trl  a^*  \right]^* 
 \left[ (  \br^\nu \trl S^{-1} (\br^\alpha \trl {\two{x}}^*) ) \btrl (\br_\nu \trl a{'}^* ) \right]^* 
\end{align*}
and the two expressions coincide due to $K$-equivariance of  the braided antipode.
 \end{proof}

\subsection{Braided Lie $*$-algebras}
In this section $(K,\r)$ is triangular, with $\r^{*\otimes *}=\r_{21}=\br$.
\begin{defi}
A $*$-structure on a  $K$-braided Lie algebra $\g$ is an antilinear involution $*:\g \to \g$ which satisfies 
\eqref{trlstar} and in addition   $([u,v])^*=[v^*,u^*]$ for all $u,v\in
\g$. 
\end{defi}
The compatibility condition \eqref{trlstar} is well-defined because it is equivalent to require
that the $K$-action $\suc_\g:K\otimes \g\to \g$ in \eqref{succ},  with $(A, \cdot)$ replaced by $(\g, [~,~])$,
is an action of $K$ on the braided Lie algebra  $\g$, which
coincides with the starting action $\trl_\g$.
The proof that $\suc_\g: K\otimes \g\to \g$ is an action on the $K$-module $\g$ is as in
\eqref{trlstar}.
The $K$-equivariance property of the bracket, 
$k\suc_\g[u,v]=[\one{k}\suc_\g u,\two{k}\suc_\g v]$ for all $k\in K$, that is,
$k\!\suc_\g\!\circ\, [~,~]=[~,~]\circ
  k\!\suc_\g$ as maps $\g\otimes \g\to \g$,  is as in
  \eqref{KonAlg}.

  The compatibility of the action $\suc_\g$  with the braided antisymmetry
 $[~,~]=-[~,~]\circ \Psi_\r$ is due to $K$-equivariance of
the braiding  $\Psi_\r$, see \eqref{Psisuc}.
Similarly, the compatibility of the action $\suc_\g$ with the braided Jacobi
  identity (which is an equality between maps obtained from the bracket and
  the braiding) is due to $K$-equivariance of all the maps
  involved.
\medskip

In the present paper the main example of $K$-braided Lie $*$-algebra
is that of braided derivations $\Der{A}$ of a $K$-module $*$-algebra
$A$.
Its $*$-structure is defined by
\beq\label{comp**eql}
\psi^*(a) := -\left( (  \br^\alpha \trl_{\Der{A}} \psi ) (\br_\alpha \trl_A a^*) \right)^* \, ,
\eeq
for all $\psi\in \Der{A}$, $a\in A$. It lifts as an antilinear and antimultiplicative
map to a $*$-structure on the universal enveloping algebra
$L=\U(\Der{A})$. This $*$-structure is compatible with the braided
action $\btrl_A: \U(\Der{A})\otimes A\to A$  defined by
$\psi\btrl_A a=\psi(a)$ for all $\psi\in \Der{A}\subseteq \U(\Der{A})
$, $a\in A$.   Since $\Der{A}$ is the $K$-submodule of
primitive elements,
$S^{-1}(\psi)=-\psi$ and \eqref{comp**eql} also reads 
$
\psi^*(a) = \left( (  \br^\alpha \trl_L S^{-1}(\psi) ) (\br_\alpha \trl_A a^*) \right)^* 
$.
This implies the compatibility:  
\begin{equation}\label{*U}
x^*\btrl_Aa = \left( (  \br^\alpha \trl_L S_L^{-1}(x) )\btrl_A (\br_\alpha \trl_A a^*) \right)^* \,.
\end{equation} This is 
the unique  $*$-structure compatible
with $\btrl_A$, indeed \eqref{comp**} is
equivalent to \eqref{*U}.

If the $K$-module $*$-algebra $A$ is  quasi-commutative, see
\eqref{qcAB}, the $K$-braided Lie $*$-algebra $\Der{A}$ is also a left $A$-module with action $\cdot:
A\otimes \Der{A}\to A$ defined in \eqref{AmodderA}. This  is
compatible with the  $*$-structure: on $A\otimes \Der{A}$ we have  the $*$-structure $(a\otimes
\psi)^*=(\r_\alpha\trl a^*)\otimes (\r^\alpha\trl \psi^*)$ (cf. 
Lemma \ref{lem:starbot}), then
$$(a\cdot \psi)^*=(\r^\alpha\trl a^*)\cdot(\r_\alpha\trl \psi^*)$$ for all
$a\in A$, $\psi\in \Der{A}$, that is,
$*\circ \cdot=\cdot\circ *$ as maps $A\otimes \Der{A}\to \Der{A}$.

\subsection{$*$-structures and twists}

\subsubsection{Twist of Hopf $*$-algebras and their representations}
A twist $\F$ of a Hopf $*$-algebra $K$ is a twist of the Hopf
algebra $K$ that satisfies
\begin{equation}\label{Fstar}
  \F^{*\otimes *}=(S\otimes S)\F_{21}~.
\end{equation}
Then $K_\F$ is a $*$-Hopf algebra with 
\beq\label{Kstar}
*_\F: K\to K \; , \quad k^{*_\F}:=\u \, k^*\bar{\u}
\eeq
 where $\u=\F^\alpha S(\F_\alpha)$ and
$\bar\u=S(\bar\F^\alpha)\bar\F_\alpha$ is its inverse. 
From \eqref{Fstar}  we have $\u^*=\u$, which implies that ${*_\F}$
is  involutive.
The twist condition \eqref{twist} implies the identity $\F\Delta(\u)=\u S(\two{\F_{\!\!\:\beta}})\otimes
\F^\beta S(\one{\F_{\!\!\:\beta}})$ or equivalently
\begin{equation}\label{Fu}
  \F\Delta(\u)=(\u\otimes \u) \bar\F^{*\otimes *}~.
  \end{equation}
  Compatibility with the coproduct then follows:
$\Delta_\F(k^{*_\F})=\Delta_\F(k)^{*_\F\otimes *_\F}$ for all $k\in K$.

If $A$ is a $K$-module $*$-algebra,
$A_\F$ is a $K_\F$-module $*$-algebra with $*:A_\F\to A_F$ that
is the same as the initial $*$ as antilinear map. Indeed this is antimultiplicative:
\begin{equation}\label{aFb*}
(a\cdot_\F b)^*=(\bF_\alpha\trl b)^*(\bF^\alpha\trl
a)^*=S^{-1}({\bF_\alpha}^{*}) \trl
b^*\,S^{-1}({\bF^\alpha}^{\!*})\trl a^*=
\bF^\alpha\trl b^*\, \bF_\alpha\trl a^*=b^*\cdot_\F a^*\,,
\end{equation}
where we used \eqref{trlstar} and \eqref{Fstar}. Moreover it is
compatible with the $K_\F$-action,
\begin{equation}\label{F*action}(k\trl a)^*=(S(k))^*\trl
a^*=(S_\F(k))^{*_\F}\trl a^*
=S^{-1}_\F(k^{*_\F})\trl a^*\,,
\end{equation}
where we used 
$(S_\F(k))^{*_\F}=(\u S(k) \bar \u)^{*_\F}=(S(k))^*$ which holds since $\u^*=\u$.

\subsubsection{Twist of quasitriangular and braided Hopf $*$-algebras, and
  of their
representations}
If $(K, \r, *)$ is  a quasitriangular  Hopf $*$-algebra with $\r$   antireal,  so is
 $(K_\F, \rF,*_\F)$ with $\rF$   antireal. 
From \eqref{Kstar}, \eqref{Fu} and the equivalent expression $\F_{21}^{*\otimes *} (\bar{u} \ot \bar{u})= \Delta^{cop} (\bar{u}) \bF_{21}$, we compute
\begin{align*}
\r_\F^{*_\F\otimes *_\F} &= (\u \ot \u) (\F_{21} \r \bF)^{* \ot *} (\bar{u} \ot \bar{u})
=   \F\Delta(\u)   \r^{* \ot *}   \Delta^{cop} (\bar{u}) \bF_{21} 
= \F\Delta(\u)   \br   \Delta^{cop} (\bar{u}) \bF_{21} 
\\ &= \F    \br  \,   \bF_{21} = \br_\F
\end{align*}
where we used the quasi-cocommutativity property \eqref{iiR}.

For a  $K$-module algebra $L$ one has the  $K_\F$-module
algebra isomorphism
\begin{equation}
  \label{isoFm} \varphi:L_\F\bot_\F L_F\to (L\bot L)_F~,~~x\bot_\F y\mapsto
  \varphi(x\bot_\F y):=\bF^\alpha\trl x \bot  \bF_\alpha\trl y
  \end{equation}
(leading to the monoidal equivalence of the categories of $K$-module algebras
and $K_\F$-module algebras).  When $L$ is a $K$-module $*$-algebra so
is $L\bot L$ while $L_\F$, $(L\bot L)_\F$ and $L_\F\bot_\F L_\F$ are $K_\F$-module $*$-algebras.
This latter 
with $*$-structure (cf. Lemma \ref{lem:starbot})
$(x \bot_\F y)^{*_\F} = \Psi_{\r_\F}(y^*\bot x^*)=({\r_\F}_{\!\alpha} \trl x^{*}) \bot ({\r_F}^{\!\alpha} \trl y^*)$.

\begin{lem}
Let $(K,\r, *)$ be a quasitriangular $*$-Hopf algebra,  with twist
$\F$ and $L$ a
$K$-module $*$-algebra. The isomorphism
$\varphi: L_\F\bot_\F L_\F  \rightarrow (L\bot L)_\F $ in \eqref{isoFm}
is a   $K_\F$-module $*$-algebra isomorphism.
\end{lem}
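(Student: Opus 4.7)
The plan is to verify the single remaining compatibility, $\varphi\circ *_{\F} = *\circ \varphi$, since by \eqref{isoFm} the map $\varphi$ is already a $K_\F$-module algebra isomorphism. Here $*_\F$ is the involution on $L_\F\bot_\F L_\F$ obtained by applying Lemma~\ref{lem:starbot} to the $K_\F$-module $*$-algebra $L_\F$ (so built from $\rF$), while $*$ on $(L\bot L)_\F$ is the original involution from Lemma~\ref{lem:starbot} applied to $L$ (built from $\r$ and unchanged by the twist as an antilinear map).

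My strategy is to rewrite both $\varphi((x\bot_\F y)^{*_\F})$ and $(\varphi(x\bot_\F y))^*$ as the action on $x^*\ot y^*$ of a single element of $K\ot K$, and then to check that the two elements coincide; in fact each will turn out to equal $\r_{21}\bF_{21}$. On the left-hand side, unpacking $*_\F$ from Lemma~\ref{lem:starbot} produces $\bF\cdot(\rF)_{21}$ acting componentwise through $\trl\ot\trl$; since the flip of factors is an antihomomorphism of $K\ot K$, the formula $\rF=\F_{21}\r\bF$ of \eqref{urm-def} gives $(\rF)_{21}=\F\,\r_{21}\,\bF_{21}$, which collapses to $\r_{21}\bF_{21}$ after cancelling $\bF\F=1\ot 1$. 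On the right-hand side, applying \eqref{starbot} and then pushing the $*$ through each action via \eqref{trlstar} introduces the factor $(S^{-1}\ot S^{-1})(\bF^{*\ot *})$ next to $\r_{21}$; the reality condition \eqref{Fstar}, inverted to $\bF^{*\ot *}=(S\ot S)\bF_{21}$, reduces this factor to $\bF_{21}$, giving again $\r_{21}\bF_{21}$.

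The main obstacle is purely notational: one has to keep straight the flip of tensor factors on $K\ot K$, the interplay between $*$ and $S^{\pm 1}$, and the two different places where $\F$ and $\bF$ enter (through the deformed $R$-matrix on one side and through the definition of $\varphi$ on the other). No new structural fact is needed beyond \eqref{urm-def}, \eqref{Fstar} and \eqref{trlstar}, and once the identification on simple tensors is carried out the statement follows by $\mathbb{C}$-linearity.
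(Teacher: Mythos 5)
Your proof is correct and follows essentially the same route as the paper's: both verify the intertwining of the involutions on simple tensors by combining \eqref{trlstar}, \eqref{Fstar} and $\r_\F=\F_{21}\r\bF$ — the paper computes $\varphi^{-1}\circ *\circ\varphi=*_\F$ where you compute the equivalent $\varphi\circ *_\F=*\circ\varphi$, and both sides indeed reduce to $\r_{21}\bF_{21}$ acting componentwise on $x^*\otimes y^*$. One small slip in your justification: the flip $\tau$ is an algebra \emph{homomorphism} of $K\otimes K$ (not an antihomomorphism), and it is precisely this that yields your (correct) formula $(\r_\F)_{21}=\F\,\r_{21}\,\bF_{21}$, so the computation stands.
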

\begin{proof} We show $\varphi^{-1}\circ *\circ \varphi=*_\F$. Using
the compatibility condition
  \eqref{trlstar}, \eqref{Fstar} and $\r_\F=\F_{21}\r\bF$ we have, for
  all $x,y\in L$, 
\begin{align*}
  \varphi^{-1}((\varphi(x\bot_\F y))^{*})
&=
\varphi^{-1}((\bF^\gamma\trl x\bot\bF_\gamma\trl y)^*)
\\
  &=
    \F^\alpha\r_\beta\trl(\bF^\gamma\trl x)^*\bot_\F\F_\alpha\r^\beta\trl(\bF_\gamma\trl y)^*
\\
&= 
 \F^\alpha\r_\beta S^{-1}({\bF^\gamma}^*)\trl x^*\bot_\F\F_\alpha\r^\beta  S^{-1}(\bF_\gamma^*)\trl y^*
\\
  &= \F^\alpha\r_\beta \bF_\gamma\trl x^*\bot_\F \F_\alpha\r^\beta \bF^\gamma\trl y^*
\\
&=
{\r_\F}_\beta\trl x^*\bot_\F{\r_\F}^\beta\trl y^*
\\
  &=(x\bot_\F y)^{*_\F}\,.
\end{align*}
\\[-3em]
\end{proof}

\begin{thm} $(L_\F,\r_\F, *)$ is a $K_\F$-braided Hopf $*$-algebra.
\end{thm}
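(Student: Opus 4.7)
The goal is to verify two things: first, that $L_\F$, equipped with the same antilinear involution $*$ as $L$, is a $K_\F$-module $*$-algebra; second, that the twisted coproduct $\Delta_\F:L_\F\to L_\F\bot_\F L_\F$ is a $*$-algebra map in the sense of Definition~\ref{*HA}. (Antireality of $\r_\F$ was already established earlier in this section.) I will organise the argument by reducing everything to the corresponding statements for the untwisted algebra $L$ through the isomorphism~$\varphi$.

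For the first point nothing new is needed. That $*$ is involutive is inherited from $L$ verbatim. Antimultiplicativity with respect to the twisted product $\cdot_\F$ is exactly the computation \eqref{aFb*}, and compatibility with the new action on $\KF$, that is $(k\trl a)^*=S^{-1}_\F(k^{*_\F})\trl a^*$, is precisely \eqref{F*action}; both formulas apply word-for-word with $A$ replaced by $L$.

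The main step is the $*$-algebra map property of $\Delta_\F$. The key observation is that, by the very definition \eqref{comod-twist} of $\Delta_\F$ and the formula \eqref{isoFm} for $\varphi$, one has the identity
\begin{equation*}
\varphi\circ \Delta_\F \;=\; \Delta \qquad \text{as linear maps }\; L\to L\bot L.
\end{equation*}
Indeed, $\varphi(\Delta_\F(x))=\varphi(\F^\alpha\trl\one{x}\bot_\F \F_\alpha\trl\two{x})=\bF^\beta\F^\alpha\trl\one{x}\bot\bF_\beta\F_\alpha\trl\two{x}=\one{x}\bot\two{x}$ by the unitality of $\F$-cancellation. Using that $L$ is a braided Hopf $*$-algebra associated with $(K,\r)$, one has $\Delta(x^*)=(\Delta(x))^*$, and using that the preceding lemma provides $\varphi$ as a $K_\F$-module $*$-algebra isomorphism, one gets $\varphi^{-1}\circ * = *_\F\circ\varphi^{-1}$. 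Combining these,
\begin{equation*}
\Delta_\F(x^*)\;=\;\varphi^{-1}(\Delta(x^*))\;=\;\varphi^{-1}((\Delta(x))^*)\;=\;\bigl(\varphi^{-1}(\Delta(x))\bigr)^{*_\F}\;=\;(\Delta_\F(x))^{*_\F},
\end{equation*}
which is the required $*$-algebra map property.

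No further work is needed: the braided Hopf algebra structure of $L_\F$ associated with $(K_\F,\r_\F)$ is the content of \cite[Prop.~4.11]{pgc-main} (twisting $L$ as $K$-module algebra and as $K$-module coalgebra), and the compatibility of the braided antipode with $*$ follows automatically from \eqref{eqS*} applied in $L_\F$. The only non-routine input is the intermediate lemma identifying $\varphi$ as a $*$-map; once this is available the verification reduces to the straight-line computation above, and no direct manipulation of the braided $*$-structure $\Psi_{\r_\F}$ on $L_\F\bot_\F L_\F$ is required.
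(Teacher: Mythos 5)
Your proposal is correct and follows essentially the same route as the paper: both arguments reduce the $*$-algebra map property of $\Delta_{L_\F}$ to the identity $\Delta_{L_\F}=\varphi^{-1}\circ\Delta_L$ from \eqref{comod-twist}, the $*$-compatibility of $\Delta_L$ on $L$, and the preceding lemma identifying $\varphi$ as a $K_\F$-module $*$-algebra isomorphism (so that $\varphi^{-1}\circ *\circ\varphi=*_\F$), while the module $*$-algebra part is delegated to \eqref{aFb*} and \eqref{F*action} exactly as in the paper. The only cosmetic difference is that you verify $\varphi\circ\Delta_\F=\Delta_L$ explicitly (via $\bF\F=1\ot 1$, which is inverse cancellation rather than unitality), whereas the paper simply reads it off from \eqref{comod-twist}.
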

\begin{proof}
We already know that $(L_\F,\r_\F)$ is a $K_\F$-braided Hopf algebra,
we prove the compatibility with $*$.  From \eqref{F*action} we see that $(L_\F,*)$ is a $K_\F$-module
 $*$-algebra. Moreover the braided
  coproduct $\Delta_{L_\F}$ is a $*$-algebra map:
 $\Delta_{L_\F}(x^*)={(\Delta_{L_\F}(x))}^{*_\F}$, for all $x\in
 L_\F$. Indeed from \eqref{comod-twist} we have
 $\Delta_{L_\F}\circ *=\varphi^{-1}\circ\Delta_L\circ *=
 \varphi^{-1}\circ
 *\circ\Delta_L=\varphi^{-1}\circ * \circ \varphi \circ \Delta_{L_\F}=*_\F \circ
\Delta_{L_F}$.
Thus  $(L_\F,\r_\F, *)$ is a $K_\F$-braided Hopf $*$-algebra
according to Definition \ref{*HA}.
  \end{proof}

If a $K$-braided Hopf $*$-algebra $L$ acts on a  $K$-module $*$-algebra $A$
then the twisted $K_\F$-braided Hopf $*$-algebra $L_\F$ acts on the
twisted $K_\F$-module
$*$-algebra $A_\F$ with action
$$\btrl_{A_\F}: L_\F\otimes_\F A_\F\to
A_\F~,~~ x\btrl_{A_\F}a=(\bF^\alpha\trl_{L_\F}
x)\btrl_A(\bF_\alpha\trl_{A_\F} a)~.
$$
The compatibility of the $*$-structure of $A_\F$ with  the braided action $\btrl_{A_\F}$ of $L_\F$ reads 
$(x\btrl_{A_\F} a)^{*}=({\br_\F}^\beta\trl_{L_\F}
S^{-1}_{L_\F}(x^*))\btrl_{A_\F}({\br_\F}_\beta\trl_{A_\F} a) $,
cf.  \eqref{comp**}. This
follows from $S^{-1}_L((k\trl x)^*)=S^{-1}_L(S^{-1}(k^*)\trl x^*)=
S^{-1}(k^*)\trl S^{-1}_L(x^*)$, for $k\in K$, $x\in L$,
owing to the $K$-equivariance of the braided antipode $S_L$.

\subsubsection{Twist of braided Lie $*$-algebras of derivations}\label{sec*333}
Let $(K,\r)$ be triangular.
If $\g$ is a $K$-module Lie $*$-algebra then $\g_\F$ is a $\KF$-module Lie
$*$-algebra with the initial involution of $\g$.
The property $([u,v]_\F)^*=[v^*,u^*]_\F$ for all $u,v\in \g_\F$ is
proven along the same lines of those in \eqref{aFb*}.

\medskip
We now consider the $K$-braided Lie $*$-algebra $\g=\Der{A}$
of derivations of the $K$-module $*$-algebra $A$,
with $*$-structure defined
in \eqref{comp**eql} and its universal enveloping $*$-algebra
$L=\U(\Der{A})$.

\begin{prop}\label{D*}
  The isomorphism $\dd: \Der{A}_\F \to  \Der{A_\F}$ of $K_\F$-braided
  Lie algebras of Theorem \ref{thm:Dalg} is a $K$-braided Lie
  $*$-algebra isomorphism. It lifts to the isomorphism $\dd: \U(\Der{A}_\F)
  \to  \U(\Der{A_\F})$ of $K$-braided Hopf $*$-algebras.
  \end{prop}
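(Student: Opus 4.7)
The plan is to reduce the $*$-compatibility of $\dd$ to a uniqueness argument, thereby avoiding explicit manipulation of twists and $R$-matrices on generators. The key observation is that, as noted after \eqref{*U}, the $*$-structure on a braided Hopf algebra of the form $\U(\Der{B})$ acting on a $K$-module $*$-algebra $B$ is uniquely characterized by its compatibility with the action on $B$, because derivations act faithfully on $B$. Instantiating this with $B = A_\F$ viewed as a $\KF$-module $*$-algebra, the $*$-structure on $\U(\Der{A_\F})$ is the unique one compatible with $\btrl_{A_\F}$. Hence it suffices to show that the $*$-structure transported from $\U(\Der{A})_\F$ through $\dd$ is also compatible with $\btrl_{A_\F}$.

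I would first verify that $\dd$ intertwines the braided actions on $A_\F$. For $\psi \in \Der{A}_\F$ and $a \in A_\F$, the twisted action formula gives
$\psi \btrl_{A_\F} a = (\bF^\alpha \trl \psi) \btrl_A (\bF_\alpha \trl a) = \dd(\psi)(a)$,
which is by definition the application of $\dd(\psi) \in \Der{A_\F}$ to $a$. Since $\dd$ is a braided Hopf algebra isomorphism, this intertwining on generators extends to an intertwining of the full algebra actions of $\U(\Der{A})_\F$ and $\U(\Der{A_\F})$ on $A_\F$. Next, by the general twisting of braided Hopf $*$-algebra actions established in \S\ref{sec:*}, the $*$-structure on $\U(\Der{A})_\F$, inherited as antilinear map from $\U(\Der{A})$, satisfies the compatibility \eqref{comp**} with respect to $\btrl_{A_\F}$ and the twisted quasitriangular data $(\KF, \rF)$.

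Combining these two facts, $\dd \circ * \circ \dd^{-1}$ is a $*$-structure on $\U(\Der{A_\F})$ compatible with $\btrl_{A_\F}$; by the uniqueness quoted above it coincides with the canonical $*$-structure on $\U(\Der{A_\F})$, yielding $\dd(x^*) = \dd(x)^{*_\F}$ for all $x \in \U(\Der{A}_\F)$. Restricting to primitive elements recovers the $*$-isomorphism of braided Lie algebras, and the full statement on the universal enveloping algebras then follows since $\dd$ is already a braided Hopf algebra isomorphism by Theorem \ref{thm:Dalg} and the remarks that follow it.

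The main obstacle is essentially bookkeeping: one must instantiate the general $*$-twisting machinery of \S\ref{sec:*} carefully for $L = \U(\Der{A})$, verifying that the braided action $\btrl_A$ there coincides with function application on derivations and that the uniqueness associated with \eqref{*U} survives the passage to $(\KF, \rF)$. A direct alternative would be to verify $\dd(\psi^*) = \dd(\psi)^{*_\F}$ on $\psi \in \Der{A}_\F$ by explicit computation, relying on the identities $\F^{*\otimes *} = (S \otimes S)\F_{21}$, $\F\Delta(\u) = (\u \otimes \u)\bF^{*\otimes *}$, and $\brF = \F\,\br\,\bF_{21}$, but this would require substantially more manipulation and offers little conceptual insight compared with the uniqueness route.
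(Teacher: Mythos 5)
Your proof is correct, but it follows a genuinely different route from the paper's. The paper argues by direct computation: it expands $\dd(\psi^*)(a)$ using \eqref{comp**eql} (with $S_L^{-1}(\psi)=-\psi$) together with the identity $(\bF^\alpha \trl a)^*\otimes(\bF_\alpha \trl \psi)^* = (\bF_\alpha \trl a^*)\otimes(\bF^\alpha \trl \psi^*)$, expands $\dd(\psi)^*(a)$ from the definition of the $*$-structure on $\Der{A_\F}$ using the $K$-equivariance of the braided antipode, and matches the two expressions via $\r_\F=\F_{21}\r\bF$; the lift to $\U$ is then the same induction on products that you invoke at the end. Your route instead assembles facts already proved in \S\ref{sec:*}: the intertwining $\psi\btrl_{A_\F}a=\dd(\psi)(a)$, the persistence of the compatibility \eqref{comp**} for the twisted action of $\U(\Der{A})_\F$ on $A_\F$ with respect to $(\KF,\rF)$, and the uniqueness encoded in \eqref{*U}. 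This is a legitimate and conceptually cleaner organization: all the $\F$-, $\u$- and $R$-matrix manipulations are confined to general statements the paper establishes anyway, so neither approach saves computation overall, but yours makes the logical dependencies transparent. Two points should be made explicit in a full write-up. First, the uniqueness coming from \eqref{*U} is only effective where the action is faithful, so you should conclude $\dd(\psi^*)=\dd(\psi)^*$ first on the primitive part $\Der{A_\F}$ (whose elements are genuine endomorphisms of $A_\F$) and only then extend to $\U(\Der{A_\F})$ by antimultiplicativity, rather than asserting it on all of $\U$ at once. Second, transporting \eqref{comp**} through $\dd$ uses that $\dd$ is $\KF$-equivariant and commutes with the braided antipodes; this should be cited from Theorem \ref{thm:Dalg} and the braided Hopf algebra isomorphism $\U(\Der{A}_\F)\simeq\U(\Der{A_\F})$.
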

  \begin{proof}
    We have to show $\dd(\psi^*)(a)={\dd(\psi)}^*(a)$ for all $\psi\in
    \Der{A}_\F$, $a\in A_\F$.
    In analogy with \eqref{aFb*} we have  $ (\bF^\alpha \trl a)^*\otimes (\bF_\alpha \trl \psi)^* 
= (\bF_\alpha \trl a^* ) \otimes (\bF^\alpha \trl \psi^*)$ and
therefore, 
\begin{align*}\dd(\psi^*) (a) 
=  (\bF^\alpha \trl_L \psi^* ) (\bF_\alpha \trl a) 
&= (\bF_\alpha \trl_L \psi )^*  (\bF^\alpha \trl a^*)^*\\
&=[(\r_\beta\trl_L S_L^{-1}(\bF_\alpha\trl_L \psi)) (\r^\beta\bF^\alpha\trl
                                                         a^*)]^* \\
&=[
(\r_\beta \bF_\alpha\trl_L S_L^{-1}( \psi))(\r^\beta\bF^\alpha\trl  a^*) ]^*
\end{align*}
where we used 
 \eqref{comp**eql}  with  $S(\psi)=-\psi$
 (or \eqref{*U}) and $K$-equivariance of the braided antipode $S_L$.
 On the other hand, by definition of the $*$-structure on   $\Der{A_\F}$ we have
 \begin{align*}
{\dd(\psi) }^*(a) =[({\r_\F}_\beta\trl_{L_\F}\!\!\: S_{L_\F}^{-1}(\dd(\psi)))(\r_\F^{\:\beta}\trl  a^*)]^* 
&=
[({\r_\F}_\beta\trl_{L_\F}
                                                                                                          \dd(S_{L}^{-1}(\psi)))(\r_\F^{\:\beta}\trl  a^*)]^* \\
&=[(\dd({\r_\F}_\beta\trl_{L} S_{L}^{-1}(\psi)))(\r_\F^{\:\beta}\trl
                                                                                                                                                                   a^*)]^* \\
&=[( (\bF^\alpha{\r_\F}_\beta\trl_{L} S_{L}^{-1}(\psi)))(\bF_\alpha\r_\F^{\:\beta}\trl
a^*)]^*\,.
\end{align*}
The proof then follows from $\r_\F=\F_{21}\r\F^{-1}$.

The isomorphism $\dd: \U(\Der{A}_\F)
\to  \U(\Der{A_\F})$ of $K$-braided Hopf algebras commutes with the
$*$-structures when restricted to the primitive elements $\Der{A}_\F$
and, since these are the generators,
on all elements of $\U(\Der{A}_\F)$. The
proof  is by induction. If  $x,y\in
\U(\Der{A}_\F)$ satisfy $\dd(x^*)=\dd(x)^*$, $\dd(y^*)=\dd(y)^*$, then
so does their product, $\dd((x\cdot_\F y)^*)=\dd(y^*\cdot_\F x^*)=\dd(y^*)\dotF\dd(x^*)=
\dd(y)^*\dotF\dd(x)^*=(\dd(x)\dotF\dd(y))^*=(\dd(x\cdot_\F y))^*$.
\end{proof}

Finally, for $A$ a quasi-commutative $K$-module $*$-algebra,
$\Der{A}$ is a $K$-braided Lie and $A$-module $*$-algebra. The twisted
algebra $A_\F$ is a
$K_\F$-braided quasi-commutative $*$-algebra and $\Der{A_\F}$ a
$K_\F$-braided Lie and $A_\F$-module $*$-algebra.
In particular,
$$
([\psi,\eta]_{\r_\F})^*=[\eta^*,\psi^*]_{\r_\F}~,\quad (a\dotF
\psi)^*=({\r_{\F}}_\alpha\trl a^*)\dotF ({\r_\F}^\alpha\trl \psi^*)~.
$$
From Theorem \ref{thm:Dalg}, Corollary \ref{DAmodiso} and Proposition
\ref{D*}, 
$\dd: \Der{A}_\F\to  \Der{A_\F}$ is an isomorphism
of
$K_\F$-braided Lie and $A_\F$-module $*$-algebras.

\section{Principal bundles over $S^4_\theta$ and their gauge transformations}\label{sec:examples}

In this section we consider  the twist deformation of the Hopf $SU(2)$-bundle over the  $4$-sphere $S^4_\theta$, and then of the  $SO(4)$-bundle over   $S^4_\theta$, seen as a homogeneous space.

\subsection{The instanton bundle}\label{sec:inst}
The $H=\O(SU(2))$ Hopf--Galois extension $\O(S^4_\theta) \subset \O(S^7_\theta)$ of \cite{LS0} can be
 obtained as a deformation by a twist on $K=\O(\IT^2)$ of the Hopf--Galois extension
 $\O(S^4) \subset \O(S^7)$ of the  classical
 $SU(2)$ Hopf bundle, \cite{ppca}. 
We use that twist deformation in the framework of the  theory developed in \S \ref{sec:TBLA}, 
 to obtain the braided Lie algebras $\DerM{\O(S^7_\theta)}$ and 
 $\mathrm{aut}_{\O(S^4_\theta) }(\O(S^7_\theta) )$ from their classical counterparts 
 $\DerM{\O(S^7}$ and $\mathrm{aut}_{\O(S^4) }(\O(S^7) )$. 

 \subsubsection{The classical Hopf bundle}
Let us start with the Hopf--Galois extension $B \subset A$ of the classical   $SU(2)$-Hopf bundle 
$\pi:  S^7 \to S^4$.
The algebra $A:= \O(S^7)$  is the commutative $*$-algebra of coordinate functions
 on the $7$-sphere $S^7$ with generators
$\{ z_a, z_a^*$,  $a=1,\dots, 4\}$,  satisfying the sphere relation $\sum z^*_a z_a=1$.
It carries a right coation of the Hopf algebra
 $\O(SU(2))$ of coordinate functions on $SU(2)$. This
 is the $*$-algebra generated by commuting elements $\{w_j, w_j^*$, $j=1,2\}$, with $\sum w^*_j w_j=1$, and standard Hopf algebra structure induced 
 from the group structure of $SU(2)$.
The right coaction of $\O(SU(2))$ on  $\O(S^7)$ is defined  on the algebra generators as
\begin{eqnarray}\label{princ-coactSU2}
\delta:\quad  \O(S^7) &\longrightarrow & 
\O(S^7) \ot \O(SU(2))
\\ \nn
\, \mathsf{u}
 &\longmapsto&
\mathsf{u}
\overset{.}{\otimes}
\mathsf{w}
\;, \quad 
\mathsf{u}:=
\begin{pmatrix}
z_1& z_2 & z_3& z_4
\vspace{2pt}
\\
-z_2^*  &
  z_1^* &
  -z_4^*
& z_3^*
\end{pmatrix}^t 
, \quad 
\mathsf{w}:=
\begin{pmatrix}
 w_1 & -w_2^*
\vspace{2pt}
\\
w_2 & w_1^*\end{pmatrix} .
\end{eqnarray}
Here $\overset{.}{\otimes}$ denotes the composition of the tensor product $\ot$ with matrix multiplication.
As usual the coaction is extended to the whole $\O(S^7)$ as a
$*$-algebra morphism.

The subalgebra   
$B=\O(S^7)^{co\O(SU(2))}$ 
of coinvariant elements for the coaction is identified with the algebra $\O(S^4)$ of coordinate  functions  on   the 
 4-sphere $S^4$.
As the algebraic counterpart of the principality of the
 Hopf bundle $\pi:S^7 \to S^4$, one has that
the algebra  $\O(S^7)$ is a (not trival) faithfully flat  Hopf--Galois extension
of $\O(S^4)$.

A set of generators for the algebra $B$ is given by the elements 
\beq\label{4sphere-coinv}
\alpha:= 2(z_1 z_3^* + z^*_2 z_4)~, \quad 
\beta:= 2(z_2 z_3^* - z^*_1 z_4)~, \quad 
x:= z_1 z_1^* + z_2 z_2^* - z_3 z_3^* -z_4 z_4^* 
\eeq 
and  their $*$-conjugated $\alpha^*, \beta^*$, with $x^*=x$. From the $7$-sphere 
relation $\sum z_\mu^* z_\mu=1$, 
 it follows that they satisfy a $4$-sphere relation
$\alpha^* \alpha + \beta^* \beta + x^2=1$. 

\noindent
For future use  we also note these generators satisfy the relations 
\begin{align}\label{regole-grado3}
&   (1-x) z_1 = \alpha z_3  - \beta^* z_4 
&&  (1-x) z_2 = \alpha^* z_4 + \beta z_3  
\nn \\
&(1+x) z_3 = \alpha^* z_1 + \beta^* z_2    
&& (1+x) z_4 = \alpha z_2 - \beta z_1 
\end{align}
together with their $*$-conjugated.

\subsubsection{The equivariant derivations}
Since the sphere $S^7$ and $S^4$ are the homogeneous spaces $S^7=Spin(5)/SU(2)$ and 
$S^4=Spin(5)/Spin(4)\simeq Spin(5)/SU(2)\times SU(2)$, the Hopf fibration 
$S^7\to S^4$ is a $Spin(5)$-equivariant $SU(2)$-principal bundle. Then,  
the right-invariant vector fields $X \in so(5)\simeq
spin(5)$ on  $Spin(5)$  project to the
 right cosets $S^7$ and $S^4$ and generate the $\O(S^7)$-module of
vector fields on $S^7$ and the  $\O(S^4)$-module of those
on $S^4$. 
A convenient generating set for the $\O(S^7)$-module 
 is given by the following right $SU(2)$-invariant vector fields on $S^7$ (cf.~\cite{L06}):
\begin{align}\label{der-sopra1}
&H_1= \tfrac{1}{2}( z_1 \partial_1 - z_1^* \partial^*_1 - z_2 \partial_2  + z_2^* \partial^*_2 - z_3 \partial_3 + z_3^* \partial^*_3 + z_4 \partial_4  - z_4^* \partial^*_4) 
\nn
\\
&H_2 =  \tfrac{1}{2}(- z_1 \partial_1 + z_1^* \partial^*_1 + z_2 \partial_2  - z_2^* \partial^*_2 - z_3 \partial_3 + z_3^* \partial^*_3 + z_4 \partial_4  - z_4^* \partial^*_4)
\end{align}
\begin{align}\label{der-sopra2}
& E_{10} =  \stwo (z_1 \partial_3  - z_3^* \partial^*_1 - z_4 \partial_2 + z_2^* \partial^*_4)
&&  
E_{-10} = \stwo (z_3 \partial_1 - z_1^* \partial^*_3 - z_2 \partial_4  + z_4^* \partial^*_2)
\nn
\\
& E_{01} = \stwo ( z_2 \partial_3  - z_3^* \partial^*_2 + z_4 \partial_1 - z_1^* \partial^*_4) 
&&  
E_{0 -1} = \stwo (z_1 \partial_4  - z_4^* \partial^*_1 + z_3 \partial_2  - z_2^* \partial^*_3)
\nn
\\
& E_{11}= - z_4 \partial_3  + z_3^* \partial^*_4 
&&  
E_{-1-1} = z_4^* \partial^*_3  - z_3 \partial_4 
\nn
\\
& E_{1-1}= - z_1 \partial_2  + z_2^* \partial^*_1  
&&  
E_{-1 1}= - z_2 \partial_1  + z_1^* \partial^*_2 . 
\end{align}
Here the partial derivatives
$\partial_a, \partial_a^*$,  are defined by  $\partial_a(z_c) 
= \delta_{a c}$ and $\partial_a(z_c^*)=0$  and 
similarly  for  $\partial_a^*$, $a, c=1,2,3,4$. The vector fields above are chosen so that their commutators close the Lie algebra 
$so(5)$ in the form
\begin{align}\label{so5}
[H_1,H_2] &=0 \; ; \quad
[H_j,E_\mathsf{r}] = r_j E_\mathsf{r} \; ; \nn \\  
[E_\mathsf{r},E_{-\mathsf{r}}] &= r_1 H_1 + r_2 H_2  \; ; \quad
[E_\mathsf{r}, E_\mathsf{s}]= N_{rs } E_\textsf{r+s} \; .
\end{align}
The elements 
$H_1,H_2$ are the generators of the Cartan subalgebra, 
and $E_\mathsf{r}$ is labelled by  
$$\mathsf{r}=(r_1,r_2)  \in \Gamma= \{(\pm 1 , 0), (0, \pm 1), (\pm1,\pm 1)\}~,$$ one of the eight roots. 
Also,  $N_\textsf{rs}=0$ if $\textsf{r+s}$ is not a root  and $N_\textsf{rs} \in \{1,-1\}$ otherwise.  
The $*$-structure is given by $H_j^*=H_j$ and $E_\mathsf{r}^*=E_{-\mathsf{r}}$, 
The $*$-structure on 
vector fields $X$ is defined by $X^*(f)= (S(X)(f^*))^* = -(X(f^*))^*$ for any function $f$,
and one accordingly checks that for the vector fields in \eqref{der-sopra1} and \eqref{der-sopra2},  $E_{-\mathsf{r}} (z_a) =-(E_{\mathsf{r} }(z^*_a))^*$ and $H_j (z_a)=-(H_j(z^*_a))^*$. 

The vector fields \eqref{der-sopra1} and \eqref{der-sopra2}, being invariant under the action of $SU(2)$, projects to a generating set for 
the  $\O(S^4)$-module of vector fields on $S^4$. Explicitly one finds, 
\begin{align}\label{der-sotto}
& H^\pi_1=  \alpha \partial_\alpha - \alpha^* \partial_{\alpha^*} 
&&  
H^\pi_2 =  \beta \partial_\beta  - \beta^* \partial_{\beta^*}
\nn
\\
& E^\pi_{10} =  \stwo (2x  \partial_{\alpha^*}  - \alpha \partial_x)
&&  
E^\pi_{-10} =  \stwo ( - 2x  \partial_\alpha + \alpha^* \partial_x)
\nn
\\
& E^\pi_{11}= \beta \partial_{\alpha^*}  - \alpha \partial_{\beta^*}
&&  
E^\pi_{-1-1}= -\beta^* \partial_{\alpha}  + \alpha^* \partial_\beta 
\nn
\\
& E^\pi_{01} =  \stwo (2x  \partial_{\beta^*}  - \beta \partial_x)
&&  
E^\pi_{0-1} =  \stwo ( - 2x  \partial_{\beta} + \beta^* \partial_x)
\nn
\\
& E^\pi_{1-1} = \beta^* \partial_{\alpha^*}  - \alpha \partial_{\beta}
&&  
E^\pi_{-11} = - \beta \partial_{\alpha}  + \alpha^* \partial_{\beta^*} 
\end{align}
using analogous partial derivatives on $\O(S^4)$. 
Indeed the $\O(S^4)$-module of vector fields on $S^4$ can be generated 
by the five elements $H_\mu= \partial_{\mu^*} - x_\mu D$, 
for $D = \sum\nolimits_\mu x_\mu \partial _{\mu}$ the Liouville vector field.
The five weights $\mu$ are those of the representation $[5]$ of
$so(5)$ with $$
x_{00} = x  \, , \quad x_{10}=  \stwo \alpha   \, , \quad x_{-10}=  \stwo \alpha^*   
 \, , \quad x_{01}=  \stwo \beta     \, , \quad x_{0-1}=  \stwo \beta^*    \, 
 $$
and sphere relation $\sum_\mu x_\mu^* x_\mu=1$. The commutators $[H_\mu, H_\nu]$ give the generators in \eqref{der-sotto}.

Dually, the vector fields \eqref{der-sopra1} and \eqref{der-sopra2} are $H=\O(SU(2))$-equivariant derivations
and  generate  the 
$\O(S^4)$-module of  such derivations
 \beq\label{der-eq-S7}
\DerM{\O(S^7)} = \{ X\in \Der{\O(S^7)} \, | \, \delta \circ X= (X \ot \id) \circ \delta \} .
\eeq
The general $H$-equivariant derivation is then of the form
\beq
X= b_1 H_1 + b_2 H_2 + \sum\nolimits_\mathsf{r}  b_\mathsf{r}  E_\mathsf{r}   \; 
\eeq
for generic elements $b_j, b_\mathsf{r}  \in \O(S^4)$. 
 These derivations are real, that is $X^*= X$, if and only if $b_j^*= b_j$ and $b_\mathsf{r}^*=b_{-\mathsf{r}}$. 
On  the generators of $\O(S^7)$ the derivation $X$ is given as  
\beq\label{Xder}
X : \O(S^7) \to \O(S^7) \;  , \quad 
\begin{pmatrix}
z_1& z_2 & z_3& z_4
\end{pmatrix}^t  \mapsto \mathsf{M}  \cdot \begin{pmatrix}
z_1& z_2 & z_3& z_4
\end{pmatrix}^t
\eeq
where  $\mathsf{M}$ is the  $4 \times 4$ matrix with entries in $ \O(S^4)$
\beq\label{matrixM-der}
\mathsf{M} = \begin{pmatrix}
a_1 & b_{1-1}^* & - b_{10}^* & b_{01}
\\
-b_{1-1} & -a_1 & -b_{01}^* & - b_{10}
\\
 b_{10} & b_{01} & -a_2 & - b_{11}
\\
- b_{01}^* & b_{10}^* & b_{11}^*& a_2
\end{pmatrix} \; ,
\quad a_1=\tfrac{1}{2} (b_1 -b_2) \, , \, \, a_2=\tfrac{1}{2} (b_1 +b_2) \; .
\eeq
The derivation \eqref{Xder} restricts to 
\begin{align}
X^\pi : \O(S^4) \to \O(S^4) \;  , \quad 
\begin{pmatrix}
\alpha &  \beta & \alpha^* & \beta^* & x
\end{pmatrix}^t
 \mapsto 
\mathsf{M}^\pi
\begin{pmatrix}
\alpha &  \beta & \alpha^* & \beta^* & x
\end{pmatrix}^t
\end{align}
with    
 \beq\label{Msotto}
\mathsf{M}^\pi=
\begin{pmatrix}
b_1  & b_{1-1}^* & 0 & b_{11}^* & \sqrt{2} b_{10}^*
\\
- b_{1-1} &  b_2 & - b_{11}^* & 0 & \sqrt{2} b_{01}^*
\\
0 &  b_{11} & - b_1 &  b_{1-1} & \sqrt{2} b_{10}
\\
- b_{11} & 0 &  - b_{1-1}^* & -  b_2 & \sqrt{2} b_{01}
\\
- b_{10} &  - b_{01} &  - b_{10}^* & -   b_{01}^* &0
\end{pmatrix}.
\eeq

\subsubsection{The Lie algebra of gauge transformations}

We next look for infinitesimal gauge transformations, that is $H$-equivariant derivations $X$ as in \eqref{Xder}
which are vertical: $X^\pi (b)=0$, for  $b \in \O(S^4)$. 
These are the kernel of the matrix $\mathsf{M}^\pi$ in \eqref{Msotto}. 
Their collection $\mathrm{aut}_{\O(S^4)}(\O(S^7))$ is clearly an $\O(S^4)$-module. It is also 
a Lie algebra with Lie bracket $[bX,b'X']=bb'[X,X']$ for any $b,b' \in \O(S^4)$ 
and $X,X' \in \mathrm{aut}_{\O(S^4)}(\O(S^7))$. 

The $Spin(5)$ equivariance of the principal bundle $S^7\to S^4$
implies that the Lie algebra $\mathrm{aut}_{\O(S^4)}(\O(S^7))$ can be organised using 
the representation theory of the Lie algebra $so(5)$. Indeed, the $Spin(5)$ 
action on $S^7$  lifts to  ${\rm{Der}}_{\M^H}(\O(S^7))$  
via the adjoint action, $Ad_gX=L_g\circ X\circ L_g^{-1}$, 
where, as usual, $L_g(a)(p)=a(g^{-1}p)$ for $g\in Spin(5)$, $p\in S^7$ and $a\in\O(S^7)$. 
Since the $Spin(5)$-action closes on
the subalgebra $\O(S^4) \subseteq \O(S^7)$, it also closes
on the Lie subalgebra $\mathrm{aut}_{\O(S^4) }(\O(S^7) )$ of
vertical derivations, indeed   $Ad_gX(b)=L_g( X(L_g^{-1}(b)))=0$ for
all $g\in Spin(5)$, $b\in \O(S^4)$. Infinitesimally, $[T, X](b) = 0$ for
all $T\in so(5)$, $b\in \O(S^4)$.  

It follows that 
$
\mathrm{aut}_{\O(S^4)}(\O(S^7)) =\oplus_\pi V_\pi
$ 
as linear space, 
with the sum over a class of representations $V_\pi$ of $so(5)$ of vertical $\O(SU(2))$-equivariant derivations.
 This decomposition will be worked out in details in \S \ref{se:rtd}.

\begin{prop}\label{3.1}
The Lie algebra  $\mathrm{aut}_{\O(S^4) }(\O(S^7))$ of infinitesimal gauge transformations of the   $\O(SU(2))$-Hopf--Galois extension 
 $\O(S^4 ) \subset \O(S^7 )$ is generated, as an
 $\O(S^4 )$-module, by the elements 
 \begin{align}\label{UWT2}
  K_1&:= 2x H_2 + \beta^* \sqrt{2} E_{01} + \beta \sqrt{2} E_{0-1}  
\nn \\
K_2&: = 2x H_1 + \alpha^* \sqrt{2} E_{10}  + \alpha \sqrt{2} E_{-10} 
\nn \\
 W_{01}&:=  \sqrt{2} \big( \beta H_1 + \alpha^*  E_{11} +  \alpha E_{-1 1} \big)
\nn \\
  W_{0-1}&:=   \sqrt{2} \big( \beta^* H_1 +  \alpha^* E_{1-1} + \alpha E_{-1-1} \big)
\nn \\
 W_{10}&:=  \sqrt{2} \big( \alpha H_2 - \beta^*   E_{11}  + \beta E_{1-1} \big)
\nn \\
  W_{-10}&:=  \sqrt{2} \big( \alpha^*  H_2 + \beta^* E_{-11}  - \beta E_{-1-1} \big)
\nn \\
 W_{11}&:=  2x  E_{11} + \alpha \sqrt{2}E_{01}  - \beta \sqrt{2}E_{10}  
\nn \\
 W_{-1-1}&:= 2x E_{-1-1}  + \alpha^* \sqrt{2}E_{0-1}  - \beta^* \sqrt{2}E_{-10}
\nn \\
W_{1 -1}&:= -2x E_{1-1}  + \beta^* \sqrt{2}E_{10} + \alpha \sqrt{2}E_{0-1}
\nn
\\
W_{-1 1}&:=  -2x E_{-11} + \beta \sqrt{2}E_{-10} + \alpha^* \sqrt{2}E_{01} . 
\end{align}
\end{prop}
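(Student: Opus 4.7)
The plan is two-fold: first, verify directly that each of the ten listed elements satisfies the verticality condition, and second, show that these elements generate $\mathrm{aut}_{\O(S^4)}(\O(S^7))$ as an $\O(S^4)$-module.

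For the verification step, I would use the fact that a general $H$-equivariant derivation has the form $X = b_1 H_1 + b_2 H_2 + \sum_\mathsf{r} b_\mathsf{r} E_\mathsf{r}$ with coefficients $b_j, b_\mathsf{r} \in \O(S^4)$, and that its restriction $X^\pi$ to $\O(S^4)$ is encoded by the matrix $\mathsf{M}^\pi$ of \eqref{Msotto}. The verticality condition is the vanishing of $X^\pi(\alpha), X^\pi(\beta), X^\pi(\alpha^*), X^\pi(\beta^*)$ and $X^\pi(x)$, i.e., a system of five equations over $\O(S^4)$ in the ten unknown coefficients. For each of the ten proposed generators one reads off the corresponding $b_j, b_\mathsf{r}$, substitutes into $\mathsf{M}^\pi$, and checks using the explicit projected vector fields \eqref{der-sotto} and the commutativity of $\O(S^4)$ that the five equations hold. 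For instance, for $K_1$ one has $b_2 = 2x$, $b_{01} = \sqrt{2}\beta^*$, $b_{0-1} = \sqrt{2}\beta$ with the remaining coefficients zero; a short calculation yields $H_2(x) = 0$, $E_{01}(x) = -\stwo\beta$, $E_{0-1}(x) = \stwo\beta^*$, so $K_1(x) = -\beta^*\beta + \beta\beta^* = 0$, and analogous cancellations take care of $K_1(\alpha), K_1(\beta), K_1(\alpha^*), K_1(\beta^*)$. The remaining nine elements are treated in the same way.

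For the spanning step, the key input is the $Spin(5)$-equivariance of the bundle $S^7 \to S^4$. Since the adjoint $Spin(5)$-action on $\O(S^7)$ preserves $\O(S^4)$, it descends to $\mathrm{aut}_{\O(S^4)}(\O(S^7))$ and makes it a direct sum of $so(5)$-representations. The ten listed elements are organized precisely by weight: $K_1, K_2$ lie in the Cartan (weight zero) and the $W_\mathsf{r}$ are weight vectors of weight $\mathsf{r}$, so together they span the adjoint representation of $so(5)$. The idea is then that $\mathrm{aut}_{\O(S^4)}(\O(S^7))$, as an $\O(S^4)$-module equipped with this $so(5)$-action, is generated by this single adjoint isotypic component, modulo the expected relations inherited from the sphere relation $\alpha^*\alpha + \beta^*\beta + x^2 = 1$ and the cubic identities \eqref{regole-grado3}.

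The main obstacle is completeness of the generating set: the verification is routine but lengthy, whereas ruling out additional $so(5)$-submodules requires genuine input. One route is a direct (cumbersome) solution of the five-equation system over $\O(S^4)$ using the sphere and cubic relations to parametrize all admissible coefficients. A cleaner approach, to be pursued in \S\ref{se:rtd}, is to decompose $\mathrm{aut}_{\O(S^4)}(\O(S^7))$ into $so(5)$-isotypic components and show that only the adjoint representation contributes; in that picture the apparent redundancy in the list reflects the syzygies dictated by the $Spin(5)$-representation theory.
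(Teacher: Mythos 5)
Your first step (direct verification that the ten listed elements are vertical) matches the paper and is unproblematic. The genuine gap is in the spanning step, which you yourself flag as ``the main obstacle'' but then leave unresolved: you sketch two possible routes without carrying out either. The paper's proof actually completes the first route. Given an arbitrary vertical derivation $X = b_1H_1+b_2H_2+\sum_{\mathsf{r}} b_{\mathsf{r}} E_{\mathsf{r}}$ whose coefficients satisfy the five kernel equations \eqref{ker}, it writes down explicit coefficients $c_1,c_2,c_{\mathsf{r}}\in\O(S^4)$ --- formula \eqref{c(b)}, e.g.\ $c_1=\tfrac14\big(2xb_2+\sqrt2\,\beta b_{01}+\sqrt2\,\beta^* b_{0-1}\big)$ --- and then verifies, by substituting the definitions \eqref{UWT2}, collecting terms, and invoking \eqref{ker}, that $c_1K_1+c_2K_2+\sum_{\mathsf{r}} c_{\mathsf{r}} W_{\mathsf{r}}=X$. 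No ``cumbersome parametrization of all admissible coefficients'' is required: one only needs a universal formula for the $c$'s in terms of the $b$'s, checked against the kernel relations. Without something of this kind your argument does not establish generation.

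Your proposed ``cleaner approach'' --- deferring completeness to the isotypic decomposition of \S\ref{se:rtd} --- would be circular as the paper is organized: Proposition \ref{prop:decoinst} is obtained by multiplying the ten generators \eqref{UWT2} by the spherical harmonics of $\O(S^4)$, i.e.\ it presupposes Proposition \ref{3.1}. (Moreover, it is not the case that ``only the adjoint representation contributes'': the module decomposes as $\bigoplus_n [d(2,n)]$, the adjoint $[10]=[d(2,0)]$ being merely the generating isotypic piece.) An independent representation-theoretic proof of completeness could presumably be given --- for instance by decomposing the full solution space of \eqref{ker} into $so(5)$-irreducibles from scratch --- but your proposal does not supply it.
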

\begin{proof}
An $H$-equivariant real derivation $X= b_1 H_1 + b_2 H_2 + \sum\nolimits_\mathsf{r}  b_\mathsf{r}  E_\mathsf{r}$ vanishes on 
$\O(S^4 )$ if  
$\mathsf{M}^\pi\begin{pmatrix} \alpha & \alpha^* & \beta & \beta^* & x \end{pmatrix}^t = 0$, for the associated matrix $\mathsf{M}^\pi$ in \eqref{Msotto}. This reads 
\begin{align}\label{ker}
b_1 \alpha + b_{1-1}^* \beta + b_{11}^* \beta^*+ \sqrt{2} b_{10}^* x &=0
\nn \\
- b_1 \alpha^* +  b_{11} \beta + b_{1-1} \beta^*+ \sqrt{2} b_{10} x&=0
\nn \\
- b_{1-1} \alpha- b_{11}^*\alpha^* + b_2 \beta+  \sqrt{2} b_{01}^* x&=0
\nn \\
- b_{11} \alpha- b_{1-1}^* \alpha^* -  b_2 \beta^*+ \sqrt{2} b_{01} x&=0
\nn \\
 b_{10} \alpha  + b_{10}^* \alpha^* + b_{01} \beta  +   b_{01}^* \beta^*  &=0 \, .
\end{align}
At the algebraic level of the present paper, it is enough to look for
solutions with entries of the matrix $\mathsf{M}^\pi$ that are linear
in the $\O(S^4)$ generators. An explicit computation leads to the
derivations
 \begin{align*}
U_1 &= i \big( 2x H_1 + \alpha^* \sqrt{2} E_{10}  + \alpha \sqrt{2} E_{-10}   \big)
\nn \\
 U_2 &= i \big( 2x H_2 + \beta^* \sqrt{2} E_{01}  + \beta \sqrt{2} E_{0-1} \big)
\nn \\
 W_1 &= (\beta^* - \beta) H_1 + \alpha^* (E_{1-1}- E_{11})+ \alpha (- E_{-1 1}+E_{-1-1} )
\nn \\
 W_2 &= i \big( (\beta^* +\beta) H_1  + \alpha^* (E_{1-1}+ E_{11} )+ \alpha (E_{-1-1}+  E_{-11} )\big) 
\nn \\
 W_3 &= (\alpha^* - \alpha) H_2 + \beta^* (E_{-11} + E_{11} ) - \beta (E_{-1-1}
+ E_{1-1}) 
\nn \\
 W_4 &= i \big( (\alpha^* + \alpha ) H_2 + \beta^* (E_{-11} -  E_{11} ) + \beta (E_{1-1}-E_{-1-1})  \big)
\nn \\
 T_1 &= 2x (E_{11}-  E_{-1-1}) + \sqrt{2} (\alpha E_{01}  - \alpha^* E_{0-1} 
 - \beta E_{10}  + \beta^* E_{-10})
\nn \\
 T_2 &=  i \big(  2x (E_{11}+E_{-1-1}) + \sqrt{2} (\alpha E_{01}  + \alpha^* E_{0-1} 
 - \beta E_{10} - \beta^* E_{-10} )\big)
\nn \\
 T_3 &= 2x (E_{1-1}- E_{-11}) + \sqrt{2} (\beta E_{-10} + \alpha^* E_{01} 
- \beta^* E_{10}   -\alpha E_{0-1})
\nn \\
 T_4 &=  i \big( 2x (E_{1-1}+E_{-11}) - \sqrt{2} (\beta E_{-10} + \alpha^* E_{01} 
 + \beta^* E_{10}  + \alpha E_{0-1} \big) \; .
\end{align*}
The derivations in \eqref{UWT2} are obtained as the linear
combinations
\begin{align*}
&K_1= -i U_2\,,~
K_2= -i U_1\,,
& &W_{01}= - \tfrac{\sqrt{2}}{2}(W_1 + i W_2)\,,
& &W_{0-1}= \tfrac{\sqrt{2}}{2}(W_1 - i W_2)\,,\\
& W_{10}= - \tfrac{\sqrt{2}}{2}(W_3 + i W_4)\,,
& &W_{-10}= \tfrac{\sqrt{2}}{2}(W_3 - i W_4)\,,
& &W_{11}=  \tfrac{1}{2} (T_1 - i T_2)\,,\\
& W_{-1-1}= - \tfrac{1}{2} (T_1 + i T_2)\,,
& &W_{1 -1}= - \tfrac{1}{2} (T_3 - i T_4)\,,
&  &W_{-1 1}=  \tfrac{1}{2} (T_3 + i T_4)\,.
\end{align*}

Each vertical derivation, 
$
X= b_1 H_1 + b_2 H_2 + \sum\nolimits_\mathsf{r}  b_\mathsf{r}  E_\mathsf{r} 
$, 
with  $b_j, b_\mathsf{r}  \in \O(S^4)$ which satisfy  \eqref{ker}   is expressed  as combination of the vertical derivations $K_j, W_\mathsf{r}$ in \eqref{UWT2} as
$$
X=  c_1 K_1 + c_2 K_2 + \sum\nolimits_\mathsf{r}  c_\mathsf{r}  W_\mathsf{r}  
$$
 with coefficients $c_1,c_2 , c_\mathsf{r}  \in \O(S^4)$ given by
\begin{align}\label{c(b)}
&  c_1=  \tfrac{1}{4}  \big( 2 x b_2 +  \sqrt{2}  \beta \ b_{01} +   \sqrt{2} \beta^*  b_{0-1}   \big) 
\quad 
&& c_2 =   \tfrac{1}{4}  \big( 2 x b_1 +  \sqrt{2}  \alpha  b_{10}  +   \sqrt{2} \alpha^*   b_{-10} \big) \nn
 \\
& c_{01}  =  \tfrac{\sqrt{2}}{4}\big( \beta^* b_1 + \alpha  b_{11} +  \alpha^* b_{-1 1} \big)
&&  c_{0-1} =  \tfrac{\sqrt{2}}{4} \big( \beta b_1 +  \alpha b_{1-1} + \alpha^* b_{-1-1} \big)
\nn \\
 & c_{10}  =\tfrac{\sqrt{2}}{4} \big( \alpha^* b_2 - \beta   b_{11}  + \beta^* b_{1-1} \big)
 && c_{-10} =\tfrac{\sqrt{2}}{4} \big( \alpha b_2 + \beta b_{-11}  - \beta^* b_{-1-1} \big)
\nn \\
 & c_{11} =  \tfrac{1}{4}  \big(  2 x  b_{11} + \sqrt{2}   \alpha^*  b_{01}  -  \sqrt{2} \beta^*  b_{10}  \big) 
 && c_{-1-1} = \tfrac{1}{4}  \big(  2 x b_{-1-1}  +  \sqrt{2} \alpha  b_{0-1}  - \sqrt{2}  \beta  b_{-10}\big) 
\nn \\
&c_{1 -1}  =  \tfrac{1}{4}  \big(  -2 x b_{1-1}  +  \sqrt{2}\beta  b_{10} + \sqrt{2} \alpha^*  b_{0-1}\big) 
&& c_{-1 1} =   \tfrac{1}{4}  \big(  -2x  b_{-11} + \sqrt{2} \beta^*  b_{-10} + \sqrt{2}  \alpha b_{01}\big) 
~. 
\end{align}
The proof uses the equation \eqref{ker} for the kernel of $\mathsf{M}^\pi$.
Indeed, from \eqref{UWT2} one computes:
\begin{align*}
 X =& \, c_1 K_1 + c_2 K_2 + \sum\nolimits_\mathsf{r}  c_\mathsf{r}  W_\mathsf{r}  
\\
 =&  \,
  \big(  c_{01}  \sqrt{2}  \beta  + c_2 2x  + c_{0-1}  \sqrt{2}  \beta^* \big) H_1
+ \big( c_1 2x  +   c_{10}  \sqrt{2}  \alpha +  c_{-10}  \sqrt{2} \alpha^*\big)  H_2 
\\ &
+   \big( c_1\beta^* \sqrt{2} +  c_{11} \alpha \sqrt{2}  + c_{-11} \alpha^* \sqrt{2} \big)E_{01} 
+ \big( c_1\beta \sqrt{2}    +  c_{1-1}  \alpha \sqrt{2}  +c_{-1-1}  \alpha^* \sqrt{2} \big)E_{0-1}  
\\ &
+\big( c_2\alpha^* \sqrt{2}   + c_{1-1}  \beta^* \sqrt{2}  -  c_{11} \beta \sqrt{2} \big)E_{10}  
+ \big( c_2 \alpha \sqrt{2}   - c_{-1-1}  \beta^* \sqrt{2}    + c_{-11}  \beta \sqrt{2}\big)E_{-10} 
\\ &
+  \big( c_{01}  \sqrt{2} \alpha^*     - c_{10}  \sqrt{2} \beta^*    + c_{11}  2x \big) E_{11} 
+ \big( c_{01}  \sqrt{2} \alpha   + c_{-10}  \sqrt{2}  \beta^*    - 2xc_{-11}\big)  E_{-11} 
\\ &
+\big( c_{10}  \sqrt{2}  \beta   -  c_{1-1} 2x   +  c_{0-1}  \sqrt{2} \alpha^*\big) E_{1-1} 
+  \big( c_{0-1}  \sqrt{2} \alpha    - c_{-10}  \sqrt{2}  \beta    +  c_{-1-1}  2x \big) E_{-1-1}  
\\
 = & \, b_1 H_1 + b_2 H_2 + \sum\nolimits_\mathsf{r}  b_\mathsf{r}  E_\mathsf{r}
\end{align*}
where the last equality follows from equations \eqref{ker} for the coefficients $b_j$, $b_\mathsf{r}$.
\end{proof}

The action of the vertical derivations $K_j, W_\mathsf{r}$ on the algebra generators
  $z_a$
  of $\O(S^7)$ is listed in Table \ref{table:vfa} in Appendix \ref{app:commut}.

The generators in \eqref{UWT2}  satisfy 
 $K_j (f^*)=-(K_j(f))^*$ and   $W_\mathsf{r} (f^*)  =-(W_{-\mathsf{r} }(f))^*$ 
for $f\in\O(S^7)$,
that is 
$K_j^*= K_j $ and $ W_\mathsf{r}^*= W_\mathsf{-r}$.
This  also follows from   $H_j^*=H_j$ and $E_\mathsf{r}^*=E_{-\mathsf{r}}$ (see page \pageref{so5}) using $(bX)^*= b^* X^*$, for $b \in \O(S^4)$ and $X$ a derivation.

\begin{prop} \label{def-10}
The generators in \eqref{UWT2} transform under the adjoint representation 
of $so(5)$ with highest weight vector  $W_{11}$: 
\begin{align}\label{adjSO}
& H_j \trl K_l = [H_j, K_l]=0 \, , \quad H_j \trl W_\mathsf{r}= [H_j, W_\mathsf{r}] = r_j W_\mathsf{r}  \, , \nn \\
& E_\mathsf{r} \trl K_j = [E_\mathsf{r} , K_j]= - r_j W_\mathsf{r} \, , \nn \\
& E_\mathsf{r} \trl W_\textsf{-r} = [E_\mathsf{r}, W_\textsf{-r}] = r_1 K_1 + r_2 K_2 \, , \quad 
E_\mathsf{r} \trl W_\mathsf{s} = [E_\mathsf{r}, W_\mathsf{s}] = N_{rs} W_\textsf{r+s} \, ,
\end{align}
with $N_\textsf{rs}$ the structure constants of $so(5)$ as before, with $N_\textsf{rs}=0$ if $\textsf{r+s}$ is not a root. 
\end{prop}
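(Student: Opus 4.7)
The plan is to compute the brackets directly from the Leibniz rule for commutators of derivations. Each element of \eqref{UWT2} has the form $X=\sum_i f_i Y_i$ with $f_i\in\O(S^4)$ and $Y_i\in\{H_1,H_2\}\cup\{E_\mathsf{r}\}$. For a derivation $Z$ of $\O(S^7)$ and $f\in\O(S^4)$ one has $[Z,fY]=Z(f)\,Y+f\,[Z,Y]$, so all the brackets reduce to two pieces of data: (i) the $so(5)$ relations \eqref{so5}; (ii) the action of $H_j$ and $E_\mathsf{r}$ on the coordinates $\alpha,\alpha^*,\beta,\beta^*,x$, which by $\O(SU(2))$-equivariance coincides with the action of the projected vector fields $H_j^\pi, E_\mathsf{r}^\pi$ listed in \eqref{der-sotto}.

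First I would verify the weight relations. In each $W_\mathsf{r}$ the $\O(S^4)$-coefficient $f_i$ has $H_j^\pi$-weight equal to $r_j$ minus the weight of the accompanying $Y_i$; combining $H_j(f_i)Y_i$ with $f_i[H_j,Y_i]$ one obtains $r_j f_i Y_i$ summand by summand, giving $[H_j,W_\mathsf{r}]=r_j W_\mathsf{r}$. For the two $K_l$, the coefficients and paired generators pair up in opposite-weight pairs (e.g.\ $\beta^* E_{01}$ and $\beta E_{0-1}$ in $K_1$), and the Leibniz and commutator contributions cancel term by term, yielding $[H_j,K_l]=0$.

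Next I would compute $[E_\mathsf{r},K_j]$ and $[E_\mathsf{r},W_\mathsf{s}]$ when $\mathsf{r+s}\neq 0$. In each case weight counting already forces the result to be a scalar multiple of $W_\mathsf{r}$, respectively of $W_{\mathsf{r+s}}$ (or $0$ if $\mathsf{r+s}$ is not a root). The scalars are read off from the contribution coming from the top-weight generator on the right-hand side; a short case analysis identifies them with $-r_j$ and with the structure constants $N_{\mathsf{rs}}$ of \eqref{so5}. The delicate case is $\mathsf{s}=-\mathsf{r}$, where the bracket $[E_\mathsf{r},W_{-\mathsf{r}}]$ must equal $r_1 K_1+r_2 K_2$. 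Here the Leibniz terms produce combinations like $2x\,(r_1H_1+r_2 H_2)$ together with quadratic expressions in $\alpha,\alpha^*,\beta,\beta^*$ paired with the appropriate $E_\mathsf{s}$'s; organising these and using the sphere relation $\alpha^*\alpha+\beta^*\beta+x^2=1$ (and its consequences \eqref{regole-grado3}) collapses the result to exactly $r_1 K_1+r_2 K_2$ in the presentation of \eqref{UWT2}.

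The main obstacle is this last reduction: keeping track of signs and structure constants $N_{\mathsf{rs}}$, and recognising that the sphere relation is precisely what allows the many terms in $[E_\mathsf{r},W_{-\mathsf{r}}]$ to be rewritten in terms of $K_1, K_2$. A conceptually lighter shortcut is available via $Spin(5)$-equivariance: the adjoint $so(5)$-action preserves both $\O(SU(2))$-equivariance and verticality, so $\mathrm{aut}_{\O(S^4)}(\O(S^7))$ is an $so(5)$-submodule; the ten elements in \eqref{UWT2} carry precisely the ten weights of the adjoint representation with $W_{11}$ a highest weight vector (annihilated by $E_{11}, E_{10}, E_{01}$, as one can check directly), so by irreducibility they span a copy of the adjoint, and the full bracket structure follows from checking a single normalisation.
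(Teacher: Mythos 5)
Your proposal is correct and follows essentially the same route as the paper, whose proof is simply ``by direct computation'': the Leibniz-rule organisation of the brackets, the use of the projected actions \eqref{der-sotto} and of the sphere relation \eqref{regole-grado3} are exactly what that computation amounts to. (One small caveat on the optional representation-theoretic shortcut: to certify $W_{11}$ as a highest weight vector you must also check annihilation by $E_{1-1}$ --- equivalently by the two simple root vectors $E_{1-1}$ and $E_{01}$ --- since $E_{1-1}$ does not lie in the subalgebra generated by $E_{11}$, $E_{10}$, $E_{01}$; this follows here from weight counting once the ten weights are identified.)
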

\begin{proof}
By direct computation.
\end{proof}
\begin{rem}\label{rem:not-indep}
The generators in \eqref{UWT2} 
are not independent over the algebra $\O(S^4)$. Indeed one finds 
they satisfy the relations: 
\begin{align}\label{relazioni-UWT2} 
\beta   W_{0-1} - \beta^*   W_{01} + \alpha  W_{-10} - \alpha^*   W_{10} & =0 
\nn
\\
 -   \beta  K_2 + \sqrt{2} x   W_{01} - \alpha^*  W_{11} + \alpha  W_{-11}  & =0 
\nn
\\
 - \beta^*  K_2 +   \sqrt{2} x W_{0-1} - \alpha   W_{-1-1} + \alpha^*  W_{1-1}  & =0 
\nn
\\
 -  \alpha  K_1   +  \sqrt{2} x W_{10} + \beta^*  W_{11}  +\beta  W_{1-1}  & =0 
\nn
\\
   -  \alpha^*  K_1  +   \sqrt{2} x W_{-10} + \beta  W_{-1-1} + \beta^*  W_{-11} & =0 \; .
\end{align}
These relations have a deep geometrical meaning. They are the
vanishing `vertical' components of five vector fields which are
horizontal for a canonical connection on the principal bundle \cite{pgc-Atiyah}. 
These horizontal vector fields carry the five dimensional
representation of $so(5)$ the small est not trivial vector representation of $so(5)$ with highest weight vector of weight $(1,0)$. 
On the other hand, any $d$-dimensional representation of  $so(5)$
as vertical vector fields on $S^7$ vanishes when $d<10$. Indeed, 
the only vertical equivariant derivation which is linear in the generators of 
$\O(S^4)$ and with weight $(1,0)$ is $X_{10} =  \alpha^*  H_2 + \beta^* E_{-11} - \beta E_{-1-1}$.
This generator is annihilated by $E_{1,1}$, $E_{1,0}$ and $E_{1,-1}$, but it is not by $E_{0,1}$. 
Since $ E_{0,1} \trl X_{10}$ has weight $(1,1)$ which is not present in the five-dimensional representation,
we conclude that the minimal space of (linear in the generators of $\O(S^4)$) derivations is ten-dimensional. 
\end{rem}

\subsubsection{A representation theoretical decomposition of $\mathrm{aut}_{\O(S^4) }(\O(S^7))$} \label{se:rtd}

The result of multiplying the generators of $\O(S^4)$ with the ten vector fields in \eqref{UWT2} can be organised using the representation theory of $so(5)$  (cf.~\cite{AS1964-I, AS1964-II}). 
An irreducible representation of $so(5)$ is characterised by two non negative integers $(s,n)$ and we 
denote it $[d(s,n)]$. It has highest weight vector of weight $\tfrac{s}{2}(1,1)+ n(1,0)$ and is of  
dimension $d(s,n)= \tfrac{1}{6} (1 + s)(1+n)(2+s+n)(3+s+2n)$. 
The generic vector field in the $\O(S^4)$-module $\mathrm{aut}_{\O(S^4) }(\O(S^7))$ is a combination of 
 the vector fields in \eqref{UWT2} with coefficients given by polynomials in the generators of  $\O(S^4)$.  
The algebra $\O(S^4)$  decomposes in the sum of 
irreducible representations of $so(5)$ (spherical harmonics on $S^4$) as
\beq\label{splittingS4}
 \O(S^4)  =  \bigoplus_{n \in \IN_0} [d(0,n)] 
\eeq
with $[d(0,n)]$ the representation of   highest weight vector $\alpha^{n}$ of weight $(n,0)$ consisting of polynomials of homogeneous degree $n$ in the generators of  $\O(S^4)$ (see Appendix \ref{app:decS4}).

 The 50 vector fields obtained by multiplying the vector fields in \eqref{UWT2} with the generators of   $\O(S^4)$ can be arranged according to the representations 
$[35] \oplus [10] \oplus [5]$.  The highest weight vectors for these three representations are
worked out to be given respectively by:  
\begin{align}\label{hwv-Y}
Z_{21} &= \alpha W_{11}  \, , \nn  \\
Y_{11} &= \sqrt{2} x W_{11} + \alpha W_{01} - \beta W_{10} \, ,  \nn \\
X_{10} &= \beta^* W_{11} + \beta W_{1-1} - \alpha K_1 + \sqrt{2} x W_{10}  ,
\end{align}
with the label denoting the value of the corresponding weight.
\begin{lem} \label{lem:5X10}
When represented as vector fields on the bundle,
the representation $[5]$ generated by the vector $X_{10}$ above vanishes. 
Also, $Y_{11} = - \sqrt{2} W_{11}$ so that the representation $[10]$ generated by 
$Y_{11}$ is the one in Proposition \ref{def-10}. The vector $Z_{21}$ makes up the representation $[35]$,  
none of whose vectors do vanish.
\begin{proof}
The action of $so(5)$ on the vector $X_{10}$ yields the additional four vectors
\begin{align}\label{relazioni-UWT-2} 
X_{00} &=  \beta^*  W_{01} -  \beta W_{0-1} + \alpha^*   W_{10} - \alpha W_{-10} \nn \\
X_{01} &=  \beta K_2  -  \sqrt{2} x W_{01} + \alpha^* W_{11} - \alpha W_{- 11} \nn \\
X_{-10} & = - \alpha^* K_1  + \sqrt{2} x W_{-10} + \beta W_{-1-1} + \beta^* W_{- 11} \nn \\
X_{0-1} & =  - \beta^* K_2 + \sqrt{2}  x W_{0-1} - \alpha W_{-1 -1} + \alpha^* W_{1-1} \,  .
\end{align}
These five derivations vanish on $\O(S^7)$; they are in fact the vanishing  
combinations of derivations in  \eqref{relazioni-UWT2}. (The  
relations   \eqref{relazioni-UWT2} have then a representation-theoretical meaning.)
Using the $4$-sphere relation and the relations in \eqref{regole-grado3} one shows that 
\beq\label{rel-2}
Y_{11} = \sqrt{2} x W_{11} + \alpha W_{01} - \beta W_{10} = - \sqrt{2} W_{11}. 
\eeq
Then the vector $Y_{11}$ generates the starting representation $[10]$ in \eqref{UWT2} as stated.
\end{proof}
\end{lem}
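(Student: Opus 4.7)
My plan is to prove the three assertions of the lemma in the order they are stated, exploiting the fact that the adjoint action of $so(5)$ on $\Der{\O(S^7)}$ restricts to an action on the $\O(S^4)$-submodule $\mathrm{aut}_{\O(S^4)}(\O(S^7))$, acts on the generators of $\O(S^4)$ as the standard vector representation $[5]$, and acts on the distinguished vertical derivations $K_j, W_\mathsf{r}$ as the adjoint representation $[10]$ computed in Proposition \ref{def-10}. In all three parts, the key structural tool is the Leibniz rule $T\trl(aW)=(T\trl a)W+a(T\trl W)$ for $T\in so(5)$, $a\in\O(S^4)$, $W\in\mathrm{aut}_{\O(S^4)}(\O(S^7))$.

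For the first assertion I would generate the orbit of $X_{10}$ by successively applying the lowering operators $E_{-10},E_{0-1},E_{-11},E_{-1-1}$ and using the table of brackets in Proposition \ref{def-10} together with the action of $so(5)$ on $\alpha,\beta,x$ and their conjugates. The resulting five vectors would be expected to match, up to proportionality, precisely the five expressions $X_{00},X_{\pm10},X_{0,\pm1}$ as displayed in \eqref{relazioni-UWT-2}. Since these are exactly the relations \eqref{relazioni-UWT2} (which are identically zero as derivations on $\O(S^7)$), the whole $[5]$ representation vanishes identically on the bundle.

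For the second assertion I would substitute the explicit expressions of $W_{11}, W_{01}, W_{10}$ from \eqref{UWT2} into $Y_{11}=\sqrt{2}xW_{11}+\alpha W_{01}-\beta W_{10}$ and collect coefficients of each basis element $H_1,H_2,E_\mathsf{r}$. Commutators between the $so(5)$ generators and the $\O(S^4)$ coefficients vanish (coefficients are scalars multiplying the derivations), so only the $4$-sphere relation $\alpha^*\alpha+\beta^*\beta+x^2=1$ and the relations \eqref{regole-grado3} are needed to reduce the resulting expression to $-\sqrt{2}W_{11}$. This step is mostly bookkeeping; the main computational care is tracking the factors of $\sqrt{2}$ and the signs of the structure constants $N_\textsf{rs}$.

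For the third assertion I would first verify that $Z_{21}=\alpha W_{11}$ is a highest weight vector of weight $(2,1)$: the weight claim is immediate from Proposition \ref{def-10} and the weight $(1,0)$ of $\alpha$, and the highest weight property follows from $E_{11}\trl\alpha=0=E_{11}\trl W_{11}$, $E_{10}\trl\alpha=\sqrt{2}x \Rightarrow E_{10}\trl Z_{21}$ reducing to a weight-$(3,1)$ vector that must vanish by absence of this weight, and similarly for $E_{01}$. Since $\dim[d(1,1)]=35$ and the $50$-dimensional space $\O(S^4)_{[1]}\cdot\{K_j,W_\mathsf{r}\}$ of linear-coefficient combinations decomposes as $[5]\oplus[10]\oplus[35]$ from Clebsch--Gordan, the vectors $X_{10}$ and $Y_{11}$ are the lowest-dimensional and the middle copy, so $Z_{21}$ necessarily generates the $[35]$. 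Non-vanishing of the whole orbit then reduces to non-vanishing of the highest-weight vector itself, which follows by evaluating $\alpha W_{11}$ on, e.g., a generator like $z_3$ using Table \ref{table:vfa}: the result is a nonzero element of $\O(S^7)$, so $Z_{21}\neq 0$ and consequently every element of the irreducible $so(5)$-orbit is nonzero. The main obstacle is the last non-vanishing step, since a careless evaluation could accidentally land in relations coming from the $4$-sphere; choosing $z_3$ (which is not annihilated by $W_{11}$) avoids this pitfall.
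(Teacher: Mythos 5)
Your proposal follows essentially the same route as the paper: the orbit of $X_{10}$ under the lowering operators reproduces exactly the five vanishing combinations \eqref{relazioni-UWT2}, the identity $Y_{11}=-\sqrt 2\, W_{11}$ is checked using the sphere relation and \eqref{regole-grado3}, and the paper in fact says less than you do about the $[35]$ generated by $Z_{21}$.

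Two points deserve correction. First, in the middle step you propose to ``collect coefficients of each basis element $H_1,H_2,E_{\mathsf r}$'' and reduce. The ten derivations $H_j, E_{\mathsf r}$ are \emph{not} free over $\O(S^4)$ as derivations on $\O(S^7)$ (the module $\DerM{\O(S^7)}$ has rank $7$), and indeed the coefficient of $E_{11}$ in $\sqrt2\,x W_{11}+\alpha W_{01}-\beta W_{10}$ is $\sqrt2(1+x^2)$ while in $-\sqrt2\,W_{11}$ it is $-2\sqrt2\,x$; these are not equal in $\O(S^4)$, so the identity fails at the level of coefficients. It holds only as an identity of derivations on $\O(S^7)$: one must evaluate both sides on the generators $z_a$ and use \eqref{regole-grado3} there (e.g.\ on $z_3$ one finds $\sqrt2\,[x(1-x)-\alpha\alpha^*-\beta\beta^*]z_4=-\sqrt2(1-x)z_4=-\sqrt2\,W_{11}(z_3)$). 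Since you do invoke \eqref{regole-grado3}, which are relations among the $z_a$, your prescription as literally stated is inconsistent with the tool you cite; the fix is simply to carry out the check on the $z_a$. Second, in the highest-weight verification for $Z_{21}$ you assert $E_{10}\trl\alpha=\sqrt2\,x$; from \eqref{der-sotto} one has $E_{10}^\pi=\tfrac1{\sqrt2}(2x\partial_{\alpha^*}-\alpha\partial_x)$, so in fact $E_{10}\trl\alpha=0$ (it is $E_{-10}\trl\alpha$ that is proportional to $x$), and likewise the $35$-dimensional representation is $[d(2,1)]$, not $[d(1,1)]$. Neither slip is fatal — your fallback argument that any weight-$(3,1)$ vector in the $50$-dimensional space must vanish is sound, as is the non-vanishing check of $\alpha W_{11}$ on $z_3$ — but the computations should be corrected.
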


By construction $\mathrm{aut}_{\O(S^4) }(\O(S^7))$ is closed under commutator. 
It turns out that the commutators of the derivations in \eqref{UWT2} can be expressed again in terms of the derivations in \eqref{UWT2} with coefficients which are linear in the generators of $\O(S^4)$ (all commutators are listed in Appendix \ref{app:commut}.
More specifically  we have the following:
\begin{lem}\label{lem2.5}
The commutators of the derivations in  \eqref{UWT2} can be organised according to
the representation $[35] \oplus [10]$ of $so(5)$ already found and generated by $\alpha W_{11}$ and
$W_{11}$. 
\begin{proof}
There are 45 commutators. The non vanishing commutator with highest weight is 
$[W_{11}, W_{10}]$ with weight $(2,1)$. A direct computation shows that
$$
[W_{11}, W_{10}] = - \sqrt{2} \alpha W_{11}
$$
and the corresponding representation is the $[35]$ found in the previous lemma. 

The commutator $[W_{11}, W_{1-1}]$, the only one of weight $(2,0)$, belongs to the representation $[35]$. 
The latter comprises also two combinations of the three vectors
$$
[K_{1}, W_{11}]  , \quad [K_{2}, W_{11}] , \quad [W_{10}, W_{01}] .
$$
of weight $(1,1)$. On the other hand, their combination 
$$
T_{11} = [K_{1}, W_{11}] + [K_{2}, W_{11}] + [W_{10}, W_{01}]
$$
is annihilated by all positive element of $so(5)$ and generates a copy
of the representation $[10]$. In fact this is just the starting
representation in \eqref{UWT2} of  Proposition \ref{def-10}. An explicit computation gives
\begin{align*}
[K_{1}, W_{11}] &= 2 x W_{11} - \sqrt{2} \beta W_{10} \\
[K_{2}, W_{11}] &= 2 x W_{11} + \sqrt{2} \alpha W_{01} \\
[W_{10}, W_{01}] &= - \sqrt{2} \beta W_{10} + \sqrt{2} \alpha W_{01} 
\end{align*}
so that  using the relation \eqref{rel-2} one obtains
\beq\label{rel-3}
T_{11} =  2 \sqrt{2} Y_{11} = - 4 W_{11} .
\eeq
Thus the representation $[10]$ generated by $T_{11}$ is the one in \eqref{UWT2} as stated.
\end{proof}
\end{lem}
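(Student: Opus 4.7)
The plan is to use the $so(5)$-equivariance of the bracket, established in Proposition \ref{def-10}, to reduce the $45$ commutators of pairs of generators in \eqref{UWT2} to two highest-weight calculations. Since the bracket is antisymmetric and $so(5)$-equivariant, it factors through an $so(5)$-equivariant map $\Lambda^{2}[10]\to \mathrm{aut}_{\O(S^4)}(\O(S^7))$; a weight count shows $\Lambda^{2}[10] = [35]\oplus [10]$ as $so(5)$-modules, so the bracket is completely determined by its value on one highest-weight vector in each summand.

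First I would locate the unique (up to scalar) weight-$(2,1)$ vector in $\Lambda^{2}[10]$, namely $W_{11}\wedge W_{10}$, and compute $[W_{11},W_{10}]$ directly from Table \ref{table:vfa}. The expected answer is $-\sqrt{2}\,\alpha W_{11}$, a nonzero multiple of the highest-weight vector $Z_{21}=\alpha W_{11}$ of the copy of $[35]$ identified in Lemma \ref{lem:5X10}. This single identity fixes the $[35]$-component of the bracket, and the remaining brackets lying in $[35]$, in particular the unique weight-$(2,0)$ combination $[W_{11},W_{1-1}]$ and two linearly independent weight-$(1,1)$ combinations, are then forced by equivariance.

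Next I would isolate the $[10]$-component at weight $(1,1)$. The corresponding weight space of $\Lambda^{2}[10]$ is three-dimensional, spanned by $K_{1}\wedge W_{11}$, $K_{2}\wedge W_{11}$ and $W_{10}\wedge W_{01}$; since $[35]$ occupies two of these three dimensions, precisely one linear combination produces the highest-weight vector of the $[10]$ summand. The natural $so(5)$-symmetric candidate is $T_{11}:=[K_{1},W_{11}]+[K_{2},W_{11}]+[W_{10},W_{01}]$, which one checks is annihilated by $E_{10}$ and $E_{01}$. A direct evaluation from Table \ref{table:vfa} should give the three summands as $2xW_{11}-\sqrt{2}\beta W_{10}$, $2xW_{11}+\sqrt{2}\alpha W_{01}$ and $-\sqrt{2}\beta W_{10}+\sqrt{2}\alpha W_{01}$, so that $T_{11}=2\sqrt{2}\,Y_{11}$. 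Invoking the identity $Y_{11}=-\sqrt{2}\,W_{11}$ established in Lemma \ref{lem:5X10} (from the sphere relation and \eqref{regole-grado3}) then yields $T_{11}=-4\,W_{11}$, placing $T_{11}$ in the original copy of $[10]$.

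The main obstacle is the honest evaluation of $[W_{11},W_{10}]$ and of the three summands of $T_{11}$ from the explicit action of the $W_{\mathsf{r}}$ and $K_{j}$ on the generators $z_{a}$; once these few commutators are computed by hand, the representation-theoretic scaffolding eliminates the remaining $42$ brackets, as they are determined by $so(5)$-equivariance from the two highest-weight identities together with the relations \eqref{relazioni-UWT2} and \eqref{rel-2} already at our disposal.
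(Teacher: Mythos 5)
Your proposal is correct and follows essentially the same route as the paper: compute the single weight-$(2,1)$ commutator $[W_{11},W_{10}]=-\sqrt{2}\,\alpha W_{11}$ to pin down the $[35]$, then form $T_{11}=[K_1,W_{11}]+[K_2,W_{11}]+[W_{10},W_{01}]$, check it is a highest-weight vector, and use $Y_{11}=-\sqrt{2}\,W_{11}$ to identify its copy of $[10]$ with the original one. The only addition is your explicit remark that the bracket factors through $\Lambda^{2}[10]=[35]\oplus[10]$, which the paper leaves implicit in its count of $45$ commutators.
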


\begin{prop} \label{prop:decoinst}
The Lie algebra $\mathrm{aut}_{\O(S^4) }(\O(S^7))$ decomposes as
$$
\mathrm{aut}_{\O(S^4) }(\O(S^7))=\bigoplus\nolimits_{n\in \IN_0} \, [d(2,n)] \, . 
$$
Here $[d(2,n)]$ is the representation of $so(5)$ as derivations on
$\O(S^7)$
of highest weight vector $\alpha^n W_{11}$ of weight $(n+1,1)$
 and dimension 
$
d(2,n)=\tfrac{1}{2}(n+1)(n+4)(2n+5)
$.
\end{prop}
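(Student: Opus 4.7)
My plan is to use the $so(5)$-representation theory of the locally finite $so(5)$-module $\mathrm{aut}_{\O(S^4)}(\O(S^7))$ (whose module structure is induced by the $Spin(5)$-equivariance of the bundle $S^7\to S^4$, as discussed before \S\ref{se:rtd}). Such a module decomposes as a direct sum of finite-dimensional irreducibles, and the task is to show that its highest weight vectors are precisely $\alpha^n W_{11}$, $n\in\IN_0$, each with multiplicity one.

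First I would verify that $\alpha^n W_{11}$ is a non-zero highest weight vector of weight $(n+1,1)$. Since $\alpha$ is the highest weight vector of $[d(0,1)] \subset\O(S^4)$ (of weight $(1,0)$, cf.\ Appendix \ref{app:decS4}) and $W_{11}$ is the highest weight vector of $[10]$ from Proposition \ref{def-10} (of weight $(1,1)$), the Leibniz rule for the $so(5)$-action (by derivations on the $\O(S^4)$-module structure) gives
\begin{equation*}
E_{\mathsf{s}}\trl(\alpha^n W_{11})=n\alpha^{n-1}(E_{\mathsf{s}}\trl \alpha)\, W_{11}+\alpha^n(E_{\mathsf{s}}\trl W_{11})=0
\end{equation*}
for every positive root $\mathsf{s}\in\{(1,0),(0,1),(1,1),(1,-1)\}$. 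Non-vanishing is checked by evaluation on $z_3$: a direct computation from \eqref{der-sopra1}--\eqref{der-sopra2} and the relations \eqref{regole-grado3} yields $W_{11}(z_3)=(1-x)z_4$, whence $(\alpha^n W_{11})(z_3)=\alpha^n(1-x)z_4\neq 0$ in the integral domain $\O(S^7)$. Thus the $so(5)$-submodule $V_n$ generated by $\alpha^n W_{11}$ is isomorphic to $[d(2,n)]$, and since different $V_n$ have distinct highest weights, their sum $\bigoplus_{n\in\IN_0} V_n$ is direct.

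The core of the proof is to show this sum exhausts $\mathrm{aut}_{\O(S^4)}(\O(S^7))$. By Proposition \ref{3.1}, every element is an $\O(S^4)$-linear combination of the ten vectors in \eqref{UWT2}, so combined with the spherical-harmonic decomposition $\O(S^4)=\bigoplus_{m\in\IN_0}[d(0,m)]$ one obtains an $so(5)$-equivariant surjection $\bigoplus_{m\in\IN_0}[d(0,m)]\otimes [10]\twoheadrightarrow \mathrm{aut}_{\O(S^4)}(\O(S^7))$. I would then argue by induction on $m$ that the image of $[d(0,m)]\otimes[10]$ lies in $\bigoplus_{k\leq m} V_k$ and that its highest-weight-$(m+1,1)$ component maps surjectively onto $V_m$. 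The cases $m=0$ (trivial) and $m=1$ are precisely Lemmas \ref{lem:5X10} and \ref{lem2.5}: the Clebsch--Gordan decomposition $[d(0,1)]\otimes[10]=[35]\oplus[10]\oplus[5]$ occurs, the $[5]$ component maps to zero (relations \eqref{relazioni-UWT2}), and the $[10]$ component collapses into $V_0$ via the identity $Y_{11}=-\sqrt{2}\,W_{11}$, so that only $V_1=[35]$ survives as a new summand, with highest weight vector $\alpha W_{11}$.

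The main obstacle is the inductive step: for general $m$ one must determine the kernel of $[d(0,m)]\otimes[10]\to \mathrm{aut}_{\O(S^4)}(\O(S^7))$ and show it contains every irreducible summand apart from the $[d(2,m)]$ one. The direct algebraic route is to multiply the five $[d(0,1)]$-valued relations \eqref{relazioni-UWT2} by an arbitrary element of $[d(0,m-1)]\subset\O(S^4)$ and to reduce the resulting expressions using the $4$-sphere relation and \eqref{regole-grado3}, then check by a Clebsch--Gordan count that the remaining highest-weight vectors are already accounted for in $\bigoplus_{k<m}V_k$ or equal $\alpha^m W_{11}$ up to scalar. Equivalently (and more conceptually), one may identify $\mathrm{aut}_{\O(S^4)}(\O(S^7))$ with the space of global sections of the adjoint bundle of the principal bundle $S^7\to S^4=Spin(5)/Spin(4)$ and invoke Frobenius reciprocity with the branching rules $Spin(5)\downarrow Spin(4)$, obtaining $\dim\mathrm{Hom}_{so(5)}\!\left(V_\lambda,\mathrm{aut}_{\O(S^4)}(\O(S^7))\right)=1$ when $\lambda=(n+1,1)$ and $0$ otherwise. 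Either route concludes the proof.
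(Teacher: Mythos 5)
Your overall strategy is the same as the paper's: decompose $\O(S^4)=\bigoplus_m[d(0,m)]$, feed the multiplication map $[d(0,m)]\otimes[10]\to\mathrm{aut}_{\O(S^4)}(\O(S^7))$, and use the relations $X_{10}=0$, $Y_{11}=-\sqrt2\,W_{11}$, $Z_{21}=\alpha W_{11}$ to see that only $[d(2,m)]$ is new at each degree. Your treatment of the highest weight vectors $\alpha^nW_{11}$ (Leibniz rule plus non-vanishing via $W_{11}(z_3)=(1-x)z_4$) and of the cases $m\le 1$ agrees with Lemmas \ref{lem:5X10} and \ref{lem2.5}.

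However, the inductive step --- which is where the paper's proof actually does its work --- is left as an assertion, and as phrased it is partly wrong. You claim one must show that the kernel of $[d(0,m)]\otimes[10]\to\mathrm{aut}_{\O(S^4)}(\O(S^7))$ "contains every irreducible summand apart from the $[d(2,m)]$ one"; this is false and contradicts your own correct observation for $m=1$: only the $[d(0,m)]$ constituent (highest weight vector $\alpha^{m-1}X_{10}$) dies, while the constituents $[d(2,m-1)]$ and $[d(2,m-2)]$ map \emph{onto} $V_{m-1}$ and $V_{m-2}$ rather than to zero (via $Y_{11}=-\sqrt2\,W_{11}$ and $\rho^2=1$, $Z_{21}=\alpha W_{11}$). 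More importantly, the "Clebsch--Gordan count" you defer to is precisely the missing ingredient: one needs the explicit decomposition
$$[d(0,n)]\otimes[10]=[d(2,n)]\oplus[d(2,n-1)]\oplus[d(0,n)]\oplus[d(2,n-2)],$$
with highest weight vectors $\alpha^nW_{11}$, $\alpha^{n-1}Y_{11}$, $\alpha^{n-1}X_{10}$, $\alpha^{n-3}\rho^2Z_{21}$, certified by the dimension identity $d(2,n)+d(2,n-1)+d(0,n)+d(2,n-2)=10\,d(0,n)$, which guarantees there is no room for further constituents. Without this count (or the Frobenius-reciprocity computation you mention only in passing, which would be a genuinely different and self-contained route if carried out), the exhaustion claim is not established. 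Supplying that decomposition and dimension check would complete your argument and make it coincide with the paper's proof.
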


 \begin{proof} 
From the splitting \eqref{splittingS4} of $\O(S^4)$ (and Appendix \ref{app:decS4}), we need to consider the $ 10 \cdot d(0,n)$ 
vector fields obtained by multiplying the 10 vector fields in \eqref{UWT2} with the 
polynomials of degree $n$  in the generators of $\O(S^4)$ that
 are in the representation $[d(0,n)]$ of highest weight vector $\alpha^n$. Of these, $\alpha^n W_{11} =  \alpha^{n-1} Z_{21}$ is a highest weight vector
and generates the representation $[d(2,n)]$.  For the remaining vectors fields, $\alpha^{n-1} Y_{11}$ is a highest weight vector
generating the representation $[d(2,n-1)]$. There is then the highest weight vector  $\alpha^{n-1} X_{10}$ generating the representation $[d(0,n)]$. 
And finally there is the highest weight vector $\alpha^{n-3} \rho^{2}Z_{21}  = \alpha^{n-3}  Z_{21}$ of the representation $[d(2,n-2)]$ (here $\rho^2:=\alpha\alpha^*+\beta \beta^*+x^2=1$). There is no room for additional vectors since a direct computation shows that 
$d(2,n) + d(2,n-1) + d(0,n) + d(2,n-2) = 10 \cdot d(0,n)$.
Since $ Z_{21}=\alpha W_{11}$, 
$Y_{11} =- \sqrt{2} W_{11}$ and $X_{10}=0$,  the only representation which has not yet appeared  in lower degree is $[d(2,n)]$, the one of highest weight $\alpha^n W_{11}$. 
\end{proof}

\subsubsection{Braided derivations and infinitesimal gauge transformations}\label{sec:braidedHopf}

The right invariant vector fields
$H_1$ and $H_2$ of $Spin(5)$  are the vector fields of
a maximal torus $\mathbb{T}^2 \subset Spin(5)$. They define the universal enveloping algebra
$K$ of the abelian Lie algebra $[H_1,H_2]=0$. Their action \eqref{der-sopra1} on  $\O(S^7)$
commutes with the $\O(SU(2))$ right coaction on $\O(S^7)$. 
To the torus 2-cocycle of \cite[Ex. 3.21]{ppca} there corresponds then
a twist 
\beq\label{twist-theta}
\F:= e^{\pi i\theta (H_1 \ot H_2 -H_2 \ot H_1)} \; , \quad \theta \in \IR\; ,
\eeq
with universal $R$-matrix  $\r_\F=\bF^2$. In fact these elements belong to a topological completion of the algebraic tensor
product $K\otimes K$. This fact does not play a role here since we diagonalise 
$\F$ (we systematically use it on eigen-functions of the generators
$H_1, H_2$).

The twist $\F$ in \eqref{twist-theta} hence leads to the $\mathcal{O}(SU(2))$-Hopf--Galois extension $\mathcal{O}(S^4_{\theta})
=\mathcal{O}(S^7_\theta)^{co\O(SU(2))} \subset
\mathcal{O}(S^7_\theta)$ introduced in \cite{LS0}.
 To conform with the literature, 
 in the following we use the subscript $\theta$
  instead of $\F$ for twisted algebras and their multiplications, as
  well as for module structures.
 The algebra $\mathcal{O}(S^7_\theta)$ is generated by coordinates
   $z_a, z_a^*$, $a=1,2,3,4$.
 Their commutation relations are obtained from \eqref{rmod-twist} given that for the action of $H_1$ and $H_2$ the $z_a$ have eigenvalues 
 $\frac{1}{2}(1, -1), \, \frac{1}{2}(-1, 1), \, \frac{1}{2}(-1, -1),
 \, \frac{1}{2}(1, 1)$, for $a=1,2,3,4$.
 The only nontrivial relations among the $z_a$ are:
   $$z_1\dott z_3= e^{\pi  i\theta}z_3\dott z_1~,~~    z_1\dott z_4=e^{-\pi  i\theta}z_4\dott z_1~,~~
      z_2\dott z_3= e^{-\pi  i\theta}z_3\dott z_2~,    ~~z_2\dott z_4= e^{\pi  i\theta}z_4\dott z_2~.$$
 Those with the $z_a^*$ are obtained using that they have
 eigenvalues opposite to the eigenvalues of the $z_a$.
 These coordinates satisfy the relation
 $z_1\dott z^*_1+z_2\dott z^*_2+z_3\dott z^*_3+z_4\dott z^*_4=1$. 
The subalgebra of $\O(SU(2))$-coinvariants is generated by
\begin{align}\label{4sphere-theta}
\alh &:= 2(z_1 \dott z_3^* + z^*_2 \dott z_4), \quad 
\beh:= 2(z_2 \dott z_3^* - z^*_1 \dott z_4), \nn \\
\xh&:= z_1 \dott z_1^* + z_2 \dott z_2^* - z_3 \dott z_3^* -z_4 \dott z_4^* . 
\end{align}
The only nontrivial
  commutation relations are 
  \begin{equation}\label{t4sphererel}
    \alh\dott \beh=e^{-2\pi  i\theta} \beh\dott\!\: \alh~,~~\alh\dott \beh^*=e^{2\pi  i\theta} \beh^*\!\!\:\dott \!\:\alh
 \end{equation}
 and their complex conjugates. They can be
obtained from the twisted multiplication rule
\eqref{rmod-twist} by using that
$\alh, \beh, \xh$ are eigen-functions of $H_1$ and $H_2$,
 with eigenvalues $(1,0)$, $(0,1)$ and $(0,0)$ respectively.
They satisfy the relations 
$\alh\dott \alh^*+\beh\dott \beh^*+\xh\dott  \xh=1$.
From these one then establishes:
\noindent
\begin{align}\label{regole-grado3-theta}
&   (1-\xh) \dott z_1 = \alh \dott z_3 - z_4 \dott \beh^*  \, , 
&&  (1-\xh) \dott z_2 = z_4 \dott \alh^* + \beh \dott z_3  \, , 
\nn \\
&(1+\xh) \dott z_3 = \alh^* \dott z_1 + \beh^* \dott z_2  \, ,
&& (1+\xh) \dott z_4 = z_2 \dott \alh - z_1 \dott \beh  \, .
\end{align}
\begin{rem}\label{rem:cambio-gen}
 The relations \eqref{regole-grado3-theta} are the analogues of the
 classical ones  \eqref{regole-grado3}.
  However, in passing from the algebra $\O(S^4)$ to the algebra $\O(S^4_\theta)
  $ we rescaled by a phase the classical  elements $\alpha, \beta$. In the vector space $\O(S^4_\theta)=\O(S^4) $,  one has 
$\xh=x$ and
\begin{align*}
\alh &= 2(z_1 \dott z_3^* + z^*_2 \dott z_4) = e^{-\frac{\pi  i\theta}{2}}  2(z_1 z_3^* + z^*_2  z_4)=
  e^{-\frac{\pi  i\theta}{2}} \alpha
  \\
\beh &= 2(z_2 \dott z_3^* - z^*_1 \dott z_4) = e^{\frac{\pi  i\theta}{2}}  2(z_2 z_3^* - z^*_1  z_4)
=e^{\frac{\pi i\theta}{2}}\beta~.
\end{align*}
\end{rem}
 
 Since the Lie algebra $so(5)$ 
is a 
braided Lie algebra associated with $K$ with trivial $R$-matrix $\r=1
\ot 1$, we
can twist it to the braided Lie algebra $so_\theta(5)$ associated with
$(K_\F, \r_\F = \bF^2)$. It has Lie brackets  (see Proposition \ref{prop:gf})
\begin{align}\label{so5twist}
[H_1,H_2]_\F &= [H_1,H_2] = 0 \; ; \quad
[H_j,E_\mathsf{r}]_\F= [H_j,E_\mathsf{r}] = r_j E_\mathsf{r} \; ;  \nn \\
[E_\mathsf{r},E_{-\mathsf{r}}]_\F&=[E_\mathsf{r},E_{-\mathsf{r}}] = r_1 H_1 + r_2 H_2  \; ;  \\
[E_\mathsf{r}, E_\mathsf{s}]_\F &= e^{- i \pi \theta {\mathsf{r}} \wedge {\mathsf{s}} } [E_\mathsf{r}, E_\mathsf{s}] 
= e^{- i \pi \theta {\mathsf{r}} \wedge {\mathsf{s}} }N_\textsf{rs} E_\textsf{r+s}\; ,\nn
\end{align}
with ${\mathsf{r}} \wedge {\mathsf{s}}  := r_1 s_2 - r_2 s_1$. Here, as for the $so(5)$-commutators in \eqref{so5},
$N_\textsf{rs}=0$ if $\textsf{r+s}$ is not a root. 

Similarly, the $\O(S^4)$-module and Lie algebra
$
{\rm{Der}}_{\M^H}(\O(S^7))$  is deformed to the
$\O(S^4_\theta)$-module and braided Lie
algebra $({\rm{Der}}_{\M^H}(\O(S^7))_\F , [~,~]_\F, \cdot_\F)$ associated with $(K_\F, \r_\F)$.
This second module is generated 
by  derivations $H_j$ and $E_\mathsf{r}$ with module structure in \eqref{acdotFD}:
$$
a\cdot_\F H_j = a H_j~, \quad a_\mathsf{s}\cdot_\F E_\mathsf{r} = e^{-\pi
  i \theta \, \mathsf{s}\wedge\mathsf{r} } \, a_\mathsf{s} E_\mathsf{r}~,
  $$
 for all $a\in \O(S^7_\theta)$ and $ a_\mathsf{s}\in
 \O(S^7_\theta) $ eigen-functions of $H_j$ with eigenvalues
 ${s}_j$  (being $E_\mathsf{r}$ 
eigenvectors of  $H_j$).
The Lie brackets are determined by those of  $so_\theta(5)$ in
\eqref{so5twist} using equation
\eqref{LieAmodF+} for the module structure.

The Lie algebra of infinitesimal gauge transformations 
 $\mathrm{aut}_{\O(S^4)}(\O(S^7))$  is generated, as an $\O(S^4)$-module, by the operators  in 
\eqref{UWT2}. 
Its twist  deformation is 
the
$\O(S^4_\theta)$-module and braided Lie algebra
$\big(\mathrm{aut}_{\O(S^4)}(\O(S^7) )_\F, [~,~]_\F,\cdot_\F\big)$
associated with $(K_\F, \r_\F= \bF^2)$.
It has braided Lie bracket determined on generators:
\begin{align}\label{aut-twist}
[K_1,K_2]_\F &=[K_1,K_2] \; ; \quad [K_j,W_\mathsf{r}]_\F= [K_j,W_\mathsf{r}] \; ; \nn \\
[W_\mathsf{r}, W_\mathsf{s}]_\F &= e^{- i \pi \theta {\mathsf{r}} \wedge {\mathsf{s}} } [W_\mathsf{r}, W_\mathsf{s}] \; ,
\end{align}
in parallel with the result in \eqref{so5twist}. 
On generic elements 
$X,X'$ in the linear span of the generators in \eqref{UWT2} and $b,b'\in \O(S^4)$, 
the equation \eqref{LieAmodF+} gives 
 \begin{equation}   
[b \cdot_\F X, b'\cdot_\F X']_\F\,= 
    b\cdot_\F ({\r_\F}_\alpha\trl b')  \cdot_\F[{\r_\F}^\alh\trl
    X,X']_{\F}~.   
  \end{equation}

From Corollary \ref{DAmodiso} we have that
${\rm{Der}}_{\M^H}(\O(S^7_\theta)) = \dd(({\rm{Der}}_{\M^H}(\O(S^7))_\F)$. Thus: 
\begin{prop}\label{derivationsinstanton}
The braided Lie algebra ${\rm{Der}}_{\M^H}(\O(S^7_\theta))$ of equivariant derivations 
of the  $\O(SU(2))$-Hopf--Galois extension  $\O(S^4_\theta) \subset \O(S^7_\theta)$
is generated, as an  $\O(S^4_\theta)$-module by elements:
\begin{equation}\label{DgenerDer}
\widetilde{H}_j :=\dd (H_j) \, , \quad  \widetilde{E}_\mathsf{r} := \dd(E_\mathsf{r})\;
, \quad j=1,2 \, , \quad \mathsf{r} \in \Gamma
\end{equation}
with bracket closing the braided Lie algebra $so_\theta(5)$, 
\begin{align}\label{sotheta5} 
&[\widetilde{H}_1,\widetilde{H}_2]_\rF =\dd([{H}_1,{H}_2]) = 0 \; ;  
&&
[\widetilde{H}_j,\widetilde{E}_\mathsf{r}]_\rF = \dd([ {H}_j,  {E}_\mathsf{r}]) = r_j \widetilde{E}_\mathsf{r}\; ; 
 \\
&[\widetilde{E}_\mathsf{r}, \widetilde{E}_{-\mathsf{r}}]_\rF = \dd([ {E}_\mathsf{r},  {E}_{-\mathsf{r}}])
= \sum\nolimits_j  r_j \widetilde{H}_j  ; 
&&
[\widetilde{E}_\mathsf{r}, \widetilde{E}_\mathsf{s}]_\rF = e^{- i \pi \theta {\mathsf{r}} \wedge {\mathsf{s}} } \, 
\dd([ {E}_\mathsf{r}, {E}_\mathsf{s}]) = e^{- i \pi \theta {\mathsf{r}} \wedge {\mathsf{s}} } N_\textsf{rs} \widetilde{E}_\textsf{r+s}\; \nonumber
\end{align}
(and $N_\textsf{rs}=0$ if $\textsf{r+s}$ is not a root). The
$\O(S^4_\theta)$-module structure is in \eqref{LieAFmod} (with $\dotF =\dott$). 
\end{prop}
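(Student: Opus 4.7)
The plan is to obtain the proposition as a direct consequence of the general twist-deformation machinery already developed in the paper, specifically Proposition \ref{autautF} and Corollary \ref{DAmodiso}, combined with the classical description of $\DerM{\O(S^7)}$ by the generators $H_j, E_\mathsf{r}$ of $so(5)$. The key observation is that essentially no new computation is required beyond identifying how the twist acts on the classical generators.

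First, I would invoke Proposition \ref{autautF}, which gives the isomorphism of $(K_\F,\r_\F)$-braided Lie algebras $\dd:\DerH{\O(S^7)}_\F \to \DerHF{\O(S^7_\theta)}=\DerM{\O(S^7_\theta)}$. Since the classical base $\O(S^4)$ is commutative, hence quasi-commutative, Corollary \ref{DAmodiso} upgrades this to an isomorphism of $\O(S^4_\theta)$-modules. Classically, as recalled after \eqref{der-eq-S7}, the $\O(S^4)$-module $\DerM{\O(S^7)}$ is generated by $H_1, H_2$ and the $E_\mathsf{r}$, $\mathsf{r}\in\Gamma$; since the underlying $K$-module of $\DerM{\O(S^7)}_\F$ agrees with the classical one, these same elements generate $\DerM{\O(S^7)}_\F$ as an $\O(S^4_\theta)$-module under the deformed product $\cdot_\F$. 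Applying the module isomorphism $\dd$ then yields that $\widetilde{H}_j=\dd(H_j)$ and $\widetilde{E}_\mathsf{r}=\dd(E_\mathsf{r})$ generate $\DerM{\O(S^7_\theta)}$ as an $\O(S^4_\theta)$-module, proving the generation statement of \eqref{DgenerDer}.

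For the brackets in \eqref{sotheta5}, I would use that $\dd$ is a braided Lie algebra morphism, so $[\widetilde{X},\widetilde{Y}]_{\rF}=\dd([X,Y]_\F)$ for $X,Y \in \{H_j,E_\mathsf{r}\}$. It then remains to compute the twisted brackets on the classical generators. Since $H_1,H_2$ have weight $(0,0)$ under the torus $K=\mathcal{U}(\mathbb{T}^2)$ while $E_\mathsf{r}$ is a weight vector of weight $\mathsf{r}$, the twist $\bF=e^{-\pi i\theta(H_1\otimes H_2-H_2\otimes H_1)}$ acts diagonally on any tensor $X\otimes Y$ of weight vectors. A direct evaluation shows $\bF(H_j\otimes H_k)=H_j\otimes H_k$ and $\bF(H_j\otimes E_\mathsf{r})=H_j\otimes E_\mathsf{r}$, while $\bF(E_\mathsf{r}\otimes E_\mathsf{s})=e^{-\pi i\theta\,\mathsf{r}\wedge\mathsf{s}}\,E_\mathsf{r}\otimes E_\mathsf{s}$, reproducing the $so_\theta(5)$ brackets \eqref{so5twist}. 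Applying $\dd$ to both sides gives the four bracket relations in \eqref{sotheta5}.

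There is no serious obstacle: the work has been done in the general theory of Section \ref{sec:TBLA}. The only slightly delicate point is to make explicit that the weight-zero character of the generators $H_j$ of the twist is what forces the first two brackets to remain undeformed, while the non-trivial phase $e^{-i\pi\theta\,\mathsf{r}\wedge\mathsf{s}}$ in the last bracket comes precisely from the eigenvalue of $\bF$ on $E_\mathsf{r}\otimes E_\mathsf{s}$; this is straightforward once one has set up the diagonal action correctly.
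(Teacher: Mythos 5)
Your proposal is correct and follows essentially the same route as the paper: the paper derives the proposition in one line from Corollary \ref{DAmodiso} (and Proposition \ref{autautF}), having already computed the twisted brackets of $so_\theta(5)$ in \eqref{so5twist} via the diagonal action of $\F$ on the weight vectors $H_j$, $E_\mathsf{r}$, exactly as you do. Your explicit remark that the weight-zero character of $H_1,H_2$ forces the first brackets to be undeformed is a useful, if minor, elaboration of what the paper leaves implicit.
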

The braided Lie algebras in \eqref{so5twist} and in \eqref{sotheta5}
are isomorphic via the isomorphism $\dd$.
For $a_\mathsf{s} \in \O(S^7_\theta)$ an eigen-function of $H_j$ of eigenvalue ${s}_j$, the derivation  
$\widetilde{E}_\mathsf{r}$
acts as
\beq\label{tactder}
\widetilde{E}_\mathsf{r} (a_\mathsf{s}) 
= (\bF^\alpha \trl {E}_\mathsf{r} )(\bF_\alpha \trl a_\mathsf{s}) 
= e^{- i \pi \theta {\mathsf{r}} \wedge {\mathsf{s}}} {E}_\mathsf{r} (a_\mathsf{s}) \, .
\eeq
On the product of two such eigen-functions  $a_\mathsf{s}, {a}_\textsf{m}  \in \O(S^7_\theta)$, we can explicitly see that $\widetilde{E}_\mathsf{r} $ acts as a braided derivation, with respect to the braiding  $\rF = \F_{21} \, \bF = \bF^2$:
\begin{align} \label{tactderder}
\widetilde{E}_\mathsf{r} (a_\mathsf{s} \dott {a}_\textsf{m} ) 
& = e^{- i \pi \theta \wdg{r}{(s+m)} } e^{- i \pi \theta \wdg{s}{m}}  {E}_\mathsf{r} (a_\mathsf{s}   {a}_\textsf{m} ) 
\nn \\
& = e^{- i \pi \theta (\wdg{r}{(s+m)}+ \wdg{s}{m}) } 
\big[ {E}_\mathsf{r} (a_\mathsf{s})   {a}_\textsf{m}  + a_\mathsf{s} {E}_\mathsf{r} ({a}_\textsf{m} ) \big]
\nn \\
& = e^{- i \pi \theta (\wdg{r}{(s+m)}+ \wdg{s}{m}) } 
\big[ e^{i \pi \theta \wdg{(r+s)}{m} }  {E}_\mathsf{r} (a_\mathsf{s}) \dott {a}_\textsf{m}
+
e^{ i \pi \theta \wdg{s}{(r+m)}   } 
 a_\mathsf{s}) \dott {E}_\mathsf{r} ({a}_\textsf{m} ) \big]
\nn \\
& = e^{- i \pi \theta {\mathsf{r}} \wedge {\mathsf{s}} }  {E}_\mathsf{r} (a_\mathsf{s}) \dott {a}_\textsf{m}
+
e^{- 2 i \pi \theta  ({\mathsf{r}} \wedge {\mathsf{s}}+ \wdg{r}{m})       } 
 a_\mathsf{s} \dott {E}_\mathsf{r} ({a}_\textsf{m} )  
\nn \\
& =  \widetilde{E}_\mathsf{r} (a_\mathsf{s}) \dott {a}_\textsf{m}
+
e^{-2  i \pi \theta  {\mathsf{r}} \wedge {\mathsf{s}}      } 
 a_\mathsf{s} \dott \widetilde{E}_\mathsf{r} ({a}_\textsf{m} ) .
\end{align}

Using these results for the subalgebra $\mathrm{aut}_{\mathcal{O}(S^4_\theta)
}(\mathcal{O}(S^7_\theta))=\dd(\mathrm{aut}_{\mathcal{O}(S^4)
}(\mathcal{O}(S^7) )_\F)$ of vertical derivations, we have the following characterization of 
$\mathrm{aut}_{\mathcal{O}(S^4_\theta)}(\mathcal{O}(S^7_\theta))$.

\begin{prop}\label{gaugeinstanton}
The braided Lie algebra $\mathrm{aut}_{\O(S^4_\theta) }(\O(S^7_\theta) )$ of infinitesimal gauge 
transformations of the  $\O(SU(2))$-Hopf--Galois extension 
$\O(S^4_\theta) \subset \O(S^7_\theta)$
is generated, as an  $\O(S^4_\theta)$-module, by the elements
\begin{equation}\label{Dgenerators}
\widetilde{K}_j :=\dd (K_j) \, , \quad  \widetilde{W}_\mathsf{r} := \dd(W_\mathsf{r})\;
, \quad j=1,2 \, , \quad \mathsf{r} \in \Gamma
\end{equation}
with bracket given in Table \ref{table:so5theta}.
The braided Lie bracket of generic elements $\widetilde{X},\widetilde{X}'$ in $\mathrm{aut}_{\O(S^4_\theta)}(\O(S^7_\theta) )$ 
and $b,b'\in \O(S^4_\theta)$ 
is then given by
\begin{equation}\label{Lierelations}
    [b \dott \widetilde{X}, b' \dott \widetilde{X}']_{\r_\F}\,=
    b \dott (\rF_\alpha\trl b')  \dott [\rF^\alpha\trl
    \widetilde{X},\widetilde{X}']_{\r_\F}~ .  
  \end{equation}
\end{prop}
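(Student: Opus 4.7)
The plan is to transport the classical description of Proposition \ref{3.1} through the braided Lie algebra isomorphism $\dd:\aut{\O(S^7)}_\F \to \autF{\O(S^7_\theta)}$ provided by Proposition \ref{autautF}. By Corollary \ref{DAmodiso} this $\dd$ is also an $\O(S^4_\theta)$-module isomorphism, so setting $\widetilde{K}_j := \dd(K_j)$ and $\widetilde{W}_\mathsf{r} := \dd(W_\mathsf{r})$ immediately yields an $\O(S^4_\theta)$-module generating set for $\autF{\O(S^7_\theta)}$ out of the classical generators in \eqref{UWT2}.

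To compute the brackets it is enough to observe that $\dd([u,v]_\F) = [\dd(u),\dd(v)]_{\r_\F}$ and then to evaluate the twisted bracket $[u,v]_\F = [\bF^\alpha \trl u,\bF_\alpha \trl v]$ (Proposition \ref{prop:gf}) on the classical generators. Proposition \ref{def-10} diagonalises the $K$-action on $\aut{\O(S^7)}$ in the basis $\{K_j, W_\mathsf{r}\}$, with $K_j$ of weight zero and $W_\mathsf{r}$ of weight $\mathsf{r}$ under $(H_1,H_2)$. Since the twist \eqref{twist-theta} is a function only of the Cartan generators, evaluating $\bF$ on pairs of weight vectors produces just phase factors, yielding
\begin{align*}
[K_1,K_2]_\F &= [K_1,K_2],\qquad [K_j,W_\mathsf{r}]_\F = [K_j,W_\mathsf{r}],\\
[W_\mathsf{r}, W_\mathsf{s}]_\F &= e^{-i\pi\theta\, {\mathsf{r}}\wedge{\mathsf{s}}}\,[W_\mathsf{r}, W_\mathsf{s}],
\end{align*}
in complete parallel with \eqref{so5twist} and \eqref{aut-twist}. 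Substituting the explicit classical commutators of Appendix \ref{app:commut} and applying $\dd$, which converts classical products $bX$ into the twisted products $b\dott\widetilde{X}$ by Corollary \ref{DAmodiso}, then produces the entries of Table \ref{table:so5theta}.

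Finally, the formula \eqref{Lierelations} is the specialisation of the general identity \eqref{LieAmodF+} to vertical derivations. Since $\widetilde{X},\widetilde{X}' \in \autF{\O(S^7_\theta)}$ vanish on $\O(S^4_\theta)$, for $b,b'\in \O(S^4_\theta)$ the term $[\widetilde{X},b']_{\r_\F} = \widetilde{X}(b')$ appearing in \eqref{LieAmodF+}, together with the analogous cubic term, both vanish; here one uses that the Cartan action preserves the subalgebra $\O(S^4_\theta)\subset \O(S^7_\theta)$, so each $\r_\F$-twist of $b$ remains in $\O(S^4_\theta)$ where the vertical derivations act as zero. Only the middle term of \eqref{LieAmodF+} survives, giving \eqref{Lierelations}. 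No genuine obstacle arises: the core content is already contained in Proposition \ref{autautF}, and the weight-diagonal nature of the twist \eqref{twist-theta} reduces everything else to a bookkeeping of phases.
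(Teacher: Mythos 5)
Your proposal is correct and follows essentially the same route as the paper: transport the classical generators and the commutators of Appendix \ref{app:commut} through the isomorphism $\dd$ of Proposition \ref{autautF} (and Corollary \ref{DAmodiso} for the module structure), compute the twisted brackets $[\,,\,]_\F$ as pure phases using that $K_j, W_\mathsf{r}$ are weight vectors for the Cartan twist \eqref{twist-theta}, and obtain \eqref{Lierelations} from \eqref{LieAmodF+} by noting that the first and cubic terms vanish on vertical derivations. The only details the paper spells out that you leave implicit are the phase rescaling $\alpha=\phia\alh$, $\beta=\phib\beh$ of Remark \ref{rem:cambio-gen} when rewriting coefficients in the $\O(S^4_\theta)$ generators, and the use of the $*$-structure to halve the number of brackets to compute; neither affects correctness.
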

\begin{proof} 
For all $\widetilde{X},\widetilde{X}' \in \mathrm{aut}_{\O(S^4_\theta) }(\O(S^7_\theta) )$
 we have
$[\widetilde{X}, \widetilde{X}']_{\r_\F}=\dd([X,X']_\F)$ 
from Proposition \ref{autautF} with the bracket on the right hand side given 
in \eqref{aut-twist}. 
Using the classical Lie brackets 
 listed in Appendix \ref{app:commut},
we can compute the brackets of the generators 
 of the braided gauge Lie algebra
 $\mathrm{aut}_{\mathcal{O}(S^4_\theta) }(\mathcal{O}(S^7_\theta))$. 
For instance, for $[\widetilde W_{-1-1},\widetilde W_{01}]_{\r_\F} $ we first compute 
\begin{align*}
[W_{-1-1}, W_{01}]_\F & = e^{\pi i\theta} [W_{-1-1},W_{01}] 
=  e^{\pi i\theta}(   \sqrt{2} \beta W_{-1-1} - \sqrt{2} \alpha^*(K_1+K_2 ) )
\\ 
& = e^{\pi i\theta}(     e^{\pi i\theta}\sqrt{2} \beta \cdot_\F W_{-1-1} - \sqrt{2} \alpha^*\cdot_\F(K_1+K_2 )).
\end{align*}
Here we used \eqref{aut-twist} to relate the brackets $[~,~]_\F$ and $[~,~]$, the module structure of
$\mathrm{aut}_{\mathcal{O}(S^4) }(\mathcal{O}(S^7) )_\F$ in 
 \eqref{acdotFD} and that the coordinates of the sphere $S^4$
are eigen-functions of $H_1$ and $H_2$. Next, applying the algebra map $\dd$ leads to
\begin{align*}
[\widetilde W_{-1-1},\widetilde W_{01}]_{\r_\F} 
&= \dd ([W_{-1-1}, W_{01}]_\F)
= 
e^{2\pi i\theta}\sqrt{2} \phib \beh \dott \widetilde W_{-1-1} -  e^{\pi i\theta}\sqrt{2}
\phia^* \alh^* \, \dott(\widetilde K_1+\widetilde K_2 ) .
\end{align*}
Here to pass from the generators of $\mathcal{O}(S^4)$ to those of $\mathcal{O}(S^4_\theta)$ 
we used (see Remark \ref{rem:cambio-gen}):
$$
\alpha= \phia \alh:=e^{\frac{\pi i\theta}{2}}\alh
~, ~~\beta=\phib\beh:=e^{-\frac{\pi i\theta}{2}}\beh \, .
$$
Half of the brackets among the generators \eqref{Dgenerators} of the braided Lie
algebra   $\mathrm{aut}_{\mathcal{O}(S^4_\theta) }(\mathcal{O}(S^7_\theta))$ are  listed 
 in Table  \ref{table:so5theta}. 
Using the $*$-structure the remaining ones can be obtained
   from
$$
[{{\widetilde X}'}{}^*, {\widetilde X}^*]_{\r_\F}=([\widetilde X, \widetilde X']_{\r_\F})^*  , \qquad (b\dott\widetilde X)^*=
({\r_\F}_\gamma \trl b^*) \dott ({\r_\F}^\gamma \trl \widetilde X^*)\, ,
$$
see  \S \ref{sec*333}.
Then, starting from $\widetilde{K}_j^*=\widetilde{K}_j$ and $\widetilde{W}^*_\mathsf{r}=  \widetilde{W}_{-\mathsf{r}}$ (cf. page \pageref{UWT2}):
$$
[\widetilde{K}_{j} , \widetilde{W}_{-\mathsf{r}}]_{\r_\F}
= [\widetilde{K}_j^*, \widetilde{W}_\mathsf{r}^*]_{\r_\F}
=([\widetilde{W}_\mathsf{r} , \widetilde{K}_j]_{\r_\F})^* 
\; , \quad
[\widetilde{W}_{- \mathsf{r}} , \widetilde{W}_{-\mathsf{s}}]_{\r_\F}
= [\widetilde{W}_\mathsf{r}^*, \widetilde{W}_\mathsf{s}^*]_{\r_\F}
=([\widetilde{W}_\mathsf{s} , \widetilde{W}_\mathsf{r}]_{\r_\F})^*  \; ,
$$
with
$$
[\widetilde{W}_\mathsf{r} , \widetilde{K}_j]_{\r_\F}= - [\widetilde{K}_j , \widetilde{W}_\mathsf{r}]_{\r_\F}
\quad , \qquad
[\widetilde{W}_\mathsf{s} , \widetilde{W}_\mathsf{r}]_{\r_\F}
= - e^{ -2i \pi \theta {\mathsf{s}} \wedge {\mathsf{r}} } [\widetilde{W}_\mathsf{r} , \widetilde{W}_\mathsf{s}]_{\r_\F}
\, .
$$
For instance,
\begin{align*}
 [\widetilde{K}_2, \widetilde{W}_{0-1}]_{\r_\F}
  =  -( [\widetilde{K}_2, \widetilde{W}_{01}]_{\r_\F})^*
 & = -  \sqrt{2} (e^{-\pi i\theta} \phia^* \alh^*   \dott \widetilde{W}_{11} 
+   e^{\pi i\theta}\phia\alh  \dott \widetilde{W}_{-11})^*
\\
 & = -  \sqrt{2} (e^{- \pi i\theta}  \phia \alh\, \dott \widetilde{W}_{-1-1}+ e^{\pi i\theta} \phia^* \alh^*  \dott \widetilde{W}_{1-1}  )
 \end{align*}
while 
\begin{align*} 
 [\widetilde{W}_{10}, \widetilde{W}_{0-1}]_{\r_\F}
 = - ( e^{- 2\pi i\theta} [\widetilde{W}_{-10}, \widetilde{W}_{01}]_{\r_\F})^*
 &= - e^{ 2\pi i\theta} \sqrt{2} (e^{2\pi i\theta}\phib \beh \dott \widetilde{W}_{-10} +  \phia^* \alh^*   \dott \widetilde{W}_{01})^*
 \\
 & = -  e^{ 2\pi i\theta}  \sqrt{2} (  \phib^* \beh^* \dott \widetilde{W}_{10} +e^{- 2\pi i\theta} \phia \alh \dott \widetilde{W}_{0-1} )
 \\
 & = -   \sqrt{2} ( e^{ 2\pi i\theta}   \phib^* \beh^* \dott \widetilde{W}_{10} +  \phia \alh \dott \widetilde{W}_{0-1} )
 \; .
 \end{align*}
\end{proof}
The action of any element $\widetilde{W}_\mathsf{r}$ on an algebra element 
$a_\mathsf{s} \in \O(S^7_\theta)$ is as in \eqref{tactder}, 
\beq\label{tactderv}
\widetilde{W}_\mathsf{r} (a_\mathsf{s}) = e^{- i \pi \theta {\mathsf{r}} \wedge {\mathsf{s}}} {W}_\mathsf{r} (a_\mathsf{s}) \, ,
\eeq
with a braided derivation property as in \eqref{tactderder},  
\beq \label{tactderderv}
\widetilde{W}_\mathsf{r} (a_\mathsf{s} \dott {a}_\textsf{m} ) 
=  \widetilde{W}_\mathsf{r} (a_\mathsf{s}) \dott {a}_\textsf{m}
+ e^{-2  i \pi \theta  {\mathsf{r}} \wedge {\mathsf{s}} } a_\mathsf{s} \dott \widetilde{W}_\mathsf{r} ({a}_\textsf{m} ) .
\eeq

\begin{table}[H]\caption{The braided brackets of vertical derivations}
 \label{table:so5theta}
 \begin{align*}
& [\widetilde K_1, \widetilde K_2]_{\r_\F} 
= \sqrt{2} ( \phia^* \alh^*  \!\dott\widetilde W_{10} - \phia\alh \!\:\dott\widetilde W_{-10}) 
\nn \\
&[\widetilde K_1, \widetilde W_{01}]_{\r_\F} = - \sqrt{2} \phib \beh\dott\widetilde K_2 + 2 x    \dott\widetilde W_{01} 
\nn \\
&[\widetilde K_1, \widetilde W_{1-1}]_{\r_\F} =- 2 x \dott\widetilde W_{1-1} + \sqrt{2}e^{\pi i\theta} \phib^* \beh^* \! \dott\widetilde W_{10}\nn \\
&[\widetilde K_1, \widetilde W_{10}]_{\r_\F} =\sqrt{2}e^{-\pi i\theta}\phib \beh \dott\widetilde W_{1-1}-\sqrt{2}e^{\pi i\theta} \phib^* \beh^* \!  \dott\widetilde W_{11} \nn \\
&[\widetilde K_1, \widetilde W_{11}]_{\r_\F} = 2 x                                                                                                                                                                                  \dott\widetilde W_{11} - \sqrt{2} e^{-\pi i\theta}\phib \beh \dott \widetilde W_{10} \nn \\
&
[\widetilde K_2, \widetilde W_{01}]_{\r_\F} = \sqrt{2}e^{-\pi i\theta} \phia^* \alh^* \!  \dott\widetilde W_{11} 
+ \sqrt{2}e^{\pi i\theta}\phia\alh \!\:\dott\widetilde W_{-11}\nn \\
 &  [\widetilde K_2, \widetilde W_{1-1}]_{\r_\F}=2 x
   \dott\widetilde W_{1-1} - \sqrt{2}e^{-\pi i\theta}  \phia\alh\!\:
   \dott\widetilde W_{0-1} \nn \\  
&[\widetilde K_2, \widetilde W_{10}]_{\r_\F} = {2}  x
    \dott\widetilde W_{10} -\sqrt{2} \phia\alh\!\:\dott    \widetilde K_1 \nn \\
   & [\widetilde K_2, \widetilde W_{11}]_{\r_\F}=2 x
   \dott\widetilde W_{11} + \sqrt{2} e^{\pi i\theta}\phia\alh\:\!
   \dott\widetilde W_{01} 
\end{align*}
\begin{align*}
  &
[\widetilde W_{01}, \widetilde W_{1-1}]_{\r_\F}= \sqrt{2} \phib \beh \dott\widetilde W_{1-1} + \sqrt{2} e^{\pi i\theta}\phia\alh \!\:\dott(\widetilde K_2 -\widetilde K_1 )
                              \nn \\
&[\widetilde W_{01}, \widetilde W_{10}]_{\r_\F}=  \sqrt{2} \phib \beh                            \dott\widetilde W_{10} - \sqrt{2}e^{\pi i\theta}\phia\alh \!\:\dott\widetilde W_{01}     \nn \\
&[\widetilde W_{01}, \widetilde W_{11}]_{\r_\F}=  \sqrt{2} \phib \beh \dott\widetilde W_{11}\nn \\
&[\widetilde W_{1-1}, \widetilde W_{10} ]_{\r_\F}=     \sqrt{2}
      \phia\alh \!\:\dott\widetilde W_{1-1}\nn \\
  & [\widetilde W_{1-1}, \widetilde W_{11}]_{\r_\F}=  -\sqrt{2}
     e^{-\pi i\theta}  \phia\alh \!\:\dott\widetilde W_{10}\nn \\
 &[\widetilde W_{10}, \widetilde W_{11}]_{\r_\F}=   \sqrt{2}   \phia\alh\:\!
    \dott\widetilde W_{11}  
\nn \\
&[ \widetilde W_{-1-1},\widetilde W_{01}]_{\r_\F}=     \sqrt{2} e^{2\pi i\theta}\phib \beh \dott\widetilde W_{-1-1} - \sqrt{2}e^{\pi i\theta} \phia^* \alh^* \!\dott(\widetilde K_1 +\widetilde K_2 )\nn \\
&[\widetilde W_{-1-1}, \widetilde W_{1-1}]_{\r_\F} =  \sqrt{2}   e^{-2\pi i\theta}\phib^* \beh^* \!\dott\widetilde W_{0-1}\nn \\
&[\widetilde W_{-1-1}, \widetilde W_{10}]_{\r_\F}=  \sqrt{2}   \phia\alh\:\!
                                                                                                                         \dott\widetilde W_{-1-1}  + \sqrt{2} e^{\pi i\theta}\phib^* \beh^* \! \dott (\widetilde K_1 +\widetilde K_2 )\nn \\
&[ \widetilde W_{-1-1}, \widetilde W_{11}]_{\r_\F}= -2x \dott (\widetilde K_1 +\widetilde K_2 ) - \sqrt{2}   \phia\alh \!\:\dott\widetilde W_{-10}- \sqrt{2}   \phib \beh \dott\widetilde W_{0-1}\nn \\
&[\widetilde W_{-10},\widetilde W_{01}]_{\r_\F}=  \sqrt{2} e^{2\pi i\theta}\phib \beh \dott\widetilde W_{-10} + \sqrt{2}\phia^* \alh^* \! \dott\widetilde W_{01}  \nn \\
&[\widetilde W_{-10}, \widetilde W_{1-1}]_{\r_\F}=  -  \sqrt{2}
       \phia^* \alh^* \! \dott\widetilde W_{1-1}  +  \sqrt{2} e^{-\pi i\theta}\phib^* \beh^* \! \dott (\widetilde K_2 -\widetilde K_1 )\nn \\
& [\widetilde W_{10}, \widetilde W_{-10}]_{\r_\F}= \sqrt{2} (\phib^* \beh^* \! \dott\widetilde W_{01} + \phib \beh \dott\widetilde W_{0-1} )\nn \\
&[ \widetilde W_{-11},\widetilde W_{01}]_{\r_\F}=  \sqrt{2} e^{2\pi i\theta}\phib \beh \dott\widetilde W_{-11}\nn \\
 &[\widetilde W_{0-1}, \widetilde W_{01}]_{\r_\F}= \sqrt{2} ( \phia^* \alh^* \!
   \dott\widetilde W_{10} - \phia\alh \!\:\dott\widetilde W_{-1 0})\nn \\
& [\widetilde W_{-11},\widetilde W_{1-1}]_{\r_\F} = 2x\dott (
            \widetilde K_1 -\widetilde K_2 ) - \sqrt{2}   \phib^* \beh^* \! \dott\widetilde W_{01}  +  \sqrt{2}   \phia\alh \!\:\dott\widetilde W_{-10}   
 \end{align*}
 \end{table}


\subsection{The orthogonal bundle on the homogeneous space $S^{4}_\theta$}\label{S2n} 

 The $4$-sphere of the previous example is the prototype of more general 
noncommutative $\theta$-spheres $S^{2n}_\theta$. These are quantum
homogeneous spaces of quantum groups $SO_\theta(2n+1,\mathbb{R})$
\cite{var, cdv}.
The Hopf--Galois extension $\O(S^{2n}_\theta)=\O(SO_\theta(2n+1,\mathbb{R}))^{co \O(SO_\theta(2n,\mathbb{R}))} \subset
\O(SO_\theta(2n+1,\mathbb{R}))$  was obtained in \cite{ppca} from the extension of the classical $SO(2n)$-bundle $SO(2n+1) \to S^{2n}$  via a 
twist deformation process for quantum homogeneous spaces.
 
 As in the previous section, the modules of infinitesimal gauge transformations of these noncommutative Hopf--Galois extensions 
 are obtained  by deforming those of the corresponding classical bundles. 
We study here the case $n=2$ of the Hopf--Galois extension  
$\O(S^{4}_\theta)=\O(SO_\theta(5,\mathbb{R}))^{co \O(SO_\theta(4,\mathbb{R}))} \subset
\O(SO_\theta(5,\mathbb{R}))$. We address the general case in \cite{pgc-Atiyah}.

Let $\O(M(4, \IR))$ be the  commutative complex $*$-algebra  
with generators $m_{JL}$, and capital indices $J,L$ running from $1$ to $4$. 
It has the standard bialgebra structures
\begin{equation}
\Delta(M)=M \overset{.}{\otimes} M \quad , \quad \varepsilon(M)=\II_4, \quad \mbox{ for } \quad M=(m_{JL}), 
\end{equation}
 in matrix notation,
where $\overset{.}{\otimes}$ denotes the combination of tensor product and matrix multiplication. 
The Hopf algebra $\O(SO(4, \IR))  $ of
coordinate functions on $SO(4, \IR)$ is the quotient 
 of $\O(M(4, \IR))$ by the 
  bialgebra ideal 
\begin{equation}\label{idealQ}
I_Q:= \langle\, M^t Q M -Q \; ; \; M Q M^t-Q\; 
\,\rangle \; 
, \quad Q:=\left( \begin{smallmatrix}
0 &\II_2 
\\
 \II_2 & 0
\end{smallmatrix} \right) = Q^t=Q^{-1}~
\end{equation}
and the further assumption $\det(M)=1$.
Indeed, this is a $*$-ideal for  the $*$-structure $*(M)=QMQ^t$ in $\O(M(4, \IR))$. 
If we introduce on the set  of indices $\{1, \dots, 4\}$ the involution defined by $1'=3$ and $2'=4$,
the $*$-structure can be simply described as 
$$
(m_{JL})^* = m_{J'L'} , \qquad J,L = 1, \cdots, 4 .
$$
The  $*$-bialgebra $\O(SO(4, \IR))$ is a $*$-Hopf algebra with antipode $S(M):= Q M^tQ^{-1}$.

Similarly, one has the algebra  $\O(M(5, \IR))$, the  commutative $*$-bialgebra with generators 
$n_{JL}$ (capital indices $J,L$ now run from $1$ to $5$).
The coproduct and counit are  
\begin{equation}
\Delta(N)=N \overset{.}{\otimes} N , \quad \varepsilon(N)=\II_5 , \quad \mbox{ for } \quad
N:= ( n_{JL} ) .
\end{equation}
The algebra of coordinate functions on $SO(5, \IR)$  is  the quotient of $\O(M(5, \IR))$ by the bialgebra $*$-ideal 
\begin{equation}\label{idealQ+1}
J_Q= \langle N^t Q N -Q \; ; \; N Q N^t-Q 
\;
\rangle \; , \quad Q:= \left( \begin{smallmatrix}
0 & \II_2 &0
\\
\II_2 & 0 &0
\\
0 & 0 & 1
\end{smallmatrix} \right),
\end{equation}
and the additional requirement that  $\det(N)=1$. 
The $*$-structure is now 
$$
(n_{JL})^* = n_{J'L'} , \qquad J,L = 1, \cdots, 5 
$$
with $5'=5$. 
Then $\O(SO(5, \IR))$ a $*$-Hopf algebra with
antipode $S(N):= Q N^tQ^{-1}$. 

We shall select the last column of $N$ by writing $n_{j5} = u_j$, for $j=1, \cdots, 4$ and $n_{55} = x$.

The  surjective Hopf $*$-algebra morphism
\beq\label{pi}
\pi:\O(SO(5, \IR)) \longrightarrow \O(SO(4, \IR)) \; , \quad 
N
\longmapsto
\left( \begin{smallmatrix}
M &0 
\\
0 &  1
\end{smallmatrix} \right)
\eeq
induces a  right coaction of  $\O(SO(4, \IR))$ on $\O(SO(5, \IR))$:  
\begin{align}\label{coactionSO(4)}
\delta:= (\id \ot \pi)\Delta: \O(SO(5, \IR)) &\longrightarrow \O(SO(5, \IR)) \ot \O(SO(4, \IR)) 
\nn \\
N & \longmapsto N \overset{.}{\otimes} 
\begin{pmatrix}
M &0 
\\
0 &  1
\end{pmatrix} .
\end{align}

\noindent
The subalgebra of $  \O(SO(5, \IR))$ made of coinvariant elements 
is isomorphic to the algebra of coordinate functions $\O(S^{4})$
on the $4$-sphere $S^{4} $.  It is indeed generated by  the elements  $u_i, u_i^*$ and $x$ in the last column of the defining matrix $N=(n_{JK})$ of $\O(SO(5, \IR))$, which  satisfy the 
sphere equation
$ (S(N)N)_{55}=   2 (u_1^* u_1 + u_2^* u_2) +x^2=1$.
The algebra extension $\O(S^{4})=\O(SO(5,\mathbb{R}))^{co \O(SO(4,\mathbb{R}))} \subset
\O(SO(5,\mathbb{R}))$ is Hopf--Galois.
 
The coordinate algebra of this orthogonal 4-sphere is isomorphic to the one of the 4-sphere of the previous section (the base space algebra of the $SU(2)$-fibration) with generators  in \eqref{4sphere-coinv}   via the identification  
\beq\label{iso4}
 u_1 \to \tfrac{1}{\sqrt{2}}  \alpha, \quad u_2 \to \tfrac{1}{\sqrt{2}}  \beta,  \quad x \to x.
\eeq  

\medskip

We now determine infinitesimal gauge transformations of the Hopf--Galois extension  
$\O(S^{4}) \subset \O(SO(5,\mathbb{R}))$ using, as done for the previous example, the representation theory 
of $so(5)$ as vector fields on the bundle. Like it was the case for the instanton bundle, the $\O(S^{4})$-module of infinitesimal gauge transformations is generated by  
the ten generators of $so(5)$. The crucial difference (see Proposition \ref{prop:splitLieG-SO})
is that in the present example, vector fields of given degree $n$ in the generators of $\O(S^{4})$ split in the sum of two irreducible representations with distinct highest weight vectors with same weight.
 
We then start with the  $\O(S^4)$-module of equivariant derivations for $H:=\O(SO(4,\mathbb{R}))$.
 \beq\label{der-eq-S05}
{\rm{Der}}_{\M^H}(\O(SO(5, \IR)))= \{ X\in \Der{O(SO(5, \IR))} \, | \, \delta \circ X= (X \ot \id) \circ \delta \} \, .
\eeq
Let $\parn{IJ}$ denote the derivation on  $\O(SO(5))$ given on the generators by  $\parn{IJ}(n_{KL}) 
= \delta_{IK} \delta_{JL}$, for $I,J,K,L=1,\dots, 5$.  Then, recalling the coaction \eqref{coactionSO(4)},  ${\rm{Der}}_{\M^H}(\O(SO(5, \IR)))$
 is 
generated by the derivations
\begin{align}\label{der-sopra-SO}
&H_1:=  n_{1K} \parn{1K}- n_{3K} \parn{3K}
\quad  
&&H_2 := n_{2K} \parn{2K}- n_{4K} \parn{4K}
\nn
\\
& E_{10}   :=   n_{5K} \parn{3K}  - n_{1K} \parn{5K}
\quad  
&& E_{-10 } := n_{3K} \parn{5K} -n_{5K} \parn{1K}  
\nn
\\
& E_{01}  := n_{5K} \parn{4K} - n_{2K} \parn{5K}  
\quad  
&&E_{0 -1}  :=    n_{4K} \parn{5K} - n_{5K} \parn{2K}
\nn
\\
& E_{11}:=    n_{2K} \parn{3K} - n_{1K} \parn{4K} 
\quad  
&& E_{-1-1}   :=  n_{3K} \parn{2K} - n_{4K} \parn{1K}
\nn
\\
& E_{1-1}:= n_{4K} \parn{3K} - n_{1K} \parn{2K} 
\quad  
&& E_{-1 1}:=   n_{3K} \parn{4K} - n_{2K} \parn{1K}
\end{align}
with summation on  $K=1, \dots, 5$ understood. These ten generators close the Lie algebra of $so(5)$ in \eqref{so5}, from which the labels used. 
It is important to notice that due to the equivariance for the right coaction of $\O(SO(4,\mathbb{R}))$, 
 when applied to a generator $n_{JK}$, these derivations  do not move the second index. This fact will play a role later on.

Being equivariant, they restrict to derivations on the subalgebra $\O(S^4)$ of coinvariants:
\begin{align}\label{der-sotto-SO}
&H^\pi_1=  u_1 \partial_{u_1} - u_1^* \partial_{u_1^*} 
\quad  
&&H^\pi_2 = u_2 \partial_{u_2} - u_2^* \partial_{u_2^*} 
\nn
\\
& E^\pi_{10}   =   x  \partial_{u_1^*} - u_1 \partial_x
\quad  
&& E^\pi_{-10 } =   u_1^* \partial_x - x  \partial_{u_1} 
\nn
\\
& E^\pi_{01}  =  x  \partial_{u_2^*} - u_2 \partial_x
\quad  
&&E^\pi_{0 -1} =  u_2^* \partial_x - x  \partial_{u_2} 
\nn
\\
& E^\pi_{11}= u_2 \partial_{u_1^*} - u_1 \partial_{u_2^*} 
\quad  
&& E^\pi_{-1-1}   =  u_1^* \partial_{u_2}  - u_2^* \partial_{u_1} 
\nn
\\
& E^\pi_{1-1}= u_2^* \partial_{u_1^*} - u_1 \partial_{u_2} 
\quad  
&& E^\pi_{-1 1}=  u_1^* \partial_{u_2^*} - u_2 \partial_{u_1}  \; ,
\end{align}
using partial derivations for the generators $n_{j5}$ of $\O(S^4)$.
With the isomorphism in \eqref{iso4}   
these derivations coincide with the ones in \eqref{der-sotto}.  

The generic equivariant derivation is of the form 
$
X= b_1 H_1 + b_2 H_2 + \sum\nolimits_\mathsf{r}  b_\mathsf{r}  E_\mathsf{r} 
$, 
with  $b_j, b_\mathsf{r}  \in \O(S^4)$ and $H_j, E_{\mathsf{r}}$ in \eqref{der-sopra-SO}. 
The condition for $X$ to be vertical only uses its restriction to $\O(S^4)$, that is the derivations in \eqref{der-sotto-SO}
and thus coincides with the conditions \eqref{ker} under the isomorphism \eqref{iso4}.
Then, in parallel with Proposition \ref{3.1} for the generators \eqref{UWT2}, and noticing that its proof only uses the algebra structure of $\O(S^4)$, 
we have the following result  (we are dropping an overall factor of $2$). 

\begin{prop}\label{eqderort}
The Lie algebra  $\mathrm{aut}_{\O(S^4) }(\O(SO(5, \IR)))$ of infinitesimal gauge transformations of the   $\O(SO(4, \IR))$-Hopf--Galois extension 
 $\O(S^4 ) \subset \O(SO(5, \IR))$ is generated, as an $\O(S^4 )$-module, by the derivations 
\begin{align}\label{UWT2-SO}
  &K_1:= x H_2 + u_2^*  E_{01} + u_2  E_{0-1}  
\quad  &&
K_2:= x H_1 + u_1^*  E_{10}  + u_1  E_{-10} 
\nn \\
 & W_{01}:=   u_2 H_1 + u_1^*  E_{11} +  u_1 E_{-1 1}  
&&
  W_{0-1}:=     u_2^* H_1 +  u_1^* E_{1-1} + u_1 E_{-1-1}  
\nn \\
& W_{10}:=      u_1 H_2 - u_2^*   E_{11}  + u_2 E_{1-1} 
&&
  W_{-10}:=      u_1^*  H_2 + u_2^* E_{-11}  - u_2 E_{-1-1}  
\nn \\
 & W_{11}:=     x  E_{11} + u_1 E_{01}  - u_2 E_{10}  
&&
 W_{-1-1}:=   x E_{-1-1}  + u_1^* E_{0-1}  - u_2^* E_{-10}
\nn \\
& W_{1 -1}:=   -x E_{1-1}  + u_2^* E_{10} + u_1 E_{0-1}
&&
W_{-1 1}:=    -x E_{-11} + u_2 E_{-10} + u_1^* E_{01} . 
\end{align}
They are eigen-operators for $H_1$ and $H_2$ and transform under the adjoint representation [10] of $so(5)$ (that is \eqref{adjSO} hold), 
with $W_{11}$ the highest weight vector. 
\end{prop}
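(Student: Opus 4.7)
My plan is to mirror the argument of Proposition \ref{3.1}, exploiting the fact that the verticality condition depends only on the restriction of derivations to the coinvariant subalgebra $\O(S^4) \subset \O(SO(5,\IR))$, and that this restricted action coincides, under the identification \eqref{iso4}, with the one used in the instanton case.

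First I would write a generic $H$-equivariant derivation in the form $X = b_1 H_1 + b_2 H_2 + \sum_{\mathsf{r}} b_{\mathsf{r}} E_{\mathsf{r}}$ with coefficients $b_j, b_{\mathsf{r}} \in \O(S^4)$ and $H_j$, $E_{\mathsf{r}}$ the equivariant derivations in \eqref{der-sopra-SO}. The verticality condition $X(b)=0$ for all $b \in \O(S^4)$ needs to be checked only on the algebra generators $u_1, u_1^*, u_2, u_2^*, x$ of $\O(S^4)$. Substituting the restrictions \eqref{der-sotto-SO} and applying the isomorphism \eqref{iso4}, the resulting linear system on the coefficients $b_j, b_{\mathsf{r}}$ is exactly the system \eqref{ker} encountered in the instanton case. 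Hence the space of solutions (restricted to coefficients linear in the generators of $\O(S^4)$) is the same as before, and the construction in the proof of Proposition \ref{3.1} yields the ten derivations \eqref{UWT2-SO}. The overall normalisation factor of $2$ is absorbed into the rescaling $u_j = \tfrac{1}{\sqrt 2}\{\alpha,\beta\}$ coming from \eqref{iso4}.

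Next I would argue that these ten derivations generate the module $\mathrm{aut}_{\O(S^4)}(\O(SO(5,\IR)))$. The same decomposition of $X$ as a combination $X = c_1 K_1 + c_2 K_2 + \sum_{\mathsf r} c_{\mathsf r} W_{\mathsf r}$ with coefficients as in \eqref{c(b)} (after the substitution \eqref{iso4}) carries over verbatim: the computation that verifies the decomposition uses only the verticality conditions \eqref{ker} on the coefficients and the $\O(S^4)$-module structure, not any information specific to $\O(S^7)$.

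Finally, for the claim that the $K_j, W_{\mathsf r}$ transform in the adjoint representation $[10]$ of $so(5)$ with $W_{11}$ as highest weight vector, I would verify the brackets \eqref{adjSO} directly. The proof is structurally identical to that of Proposition \ref{def-10}: one uses only (i) the $so(5)$-commutation relations \eqref{so5}, (ii) the fact that $H_1,H_2,E_{\mathsf{r}}$ act on $u_1,u_2,x$ with the same weights as they do on $\tfrac{1}{\sqrt 2}\alpha,\tfrac{1}{\sqrt 2}\beta,x$, and (iii) the Leibniz rule together with the fact that $H_j, E_{\mathsf r}$ commute with the right $\O(SO(4,\IR))$-coaction (so that the $\O(S^4)$-coefficients $u_i, u_i^*, x$ in the definition \eqref{UWT2-SO} can be manipulated using the $so(5)$-action on the base sphere). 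The observation that $W_{11}$ is the highest weight vector follows from $[E_{\mathsf r}, W_{11}]=0$ for all positive roots $\mathsf r$, which is a finite check.

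The only step requiring a genuinely new observation, as opposed to direct transfer from \S \ref{sec:inst}, is the verification that the operators in \eqref{UWT2-SO} are indeed $H$-equivariant as derivations of $\O(SO(5,\IR))$. This follows because each $H_j$ and $E_{\mathsf r}$ in \eqref{der-sopra-SO} involves only the index $K$ summed over the \emph{rows} of $N$ while leaving the column index fixed; since the $\O(SO(4,\IR))$-coaction \eqref{coactionSO(4)} acts on the column index, the derivations \eqref{der-sopra-SO} are manifestly $H$-equivariant, and the equivariance of $K_j, W_{\mathsf r}$ follows because their $\O(S^4)$-coefficients are coinvariant. This is the substantive ingredient that is particular to the orthogonal bundle and does not appear in the instanton case.
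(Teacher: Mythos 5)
Your proposal is correct and follows essentially the same route as the paper, which gives no separate proof but argues (in the text preceding the proposition) exactly as you do: the verticality condition only involves the restrictions \eqref{der-sotto-SO}, which under \eqref{iso4} reproduce the system \eqref{ker}, so the proof of Proposition \ref{3.1} — which uses only the algebra structure of $\O(S^4)$ — carries over, up to the overall factor of $2$. Your closing observation on the $H$-equivariance of the derivations \eqref{der-sopra-SO} (they act on the row index while the coaction acts on the column index) is also the one the paper records just after introducing them.
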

\noindent
In particular we have that $H_j \trl K_l =  0$ and $H_j \trl W_\mathsf{r} = r_j W_\mathsf{r}$, 
and this induces a (left, see later)  $\IZ^2$-grading 
on the derivations. They satisfy analogue relation of those in \eqref{relazioni-UWT2}: 
\begin{align}\label{relazioni-UWT2-so} 
&u_2   W_{0-1} - u_2^*   W_{01} + u_1  W_{-10} - u_1^*   W_{10} =0 
\nn
\\
& -   u_2  K_2 +  x   W_{01} - u_1^*  W_{11} + u_1  W_{-11}  =0 
\nn
\\
&  u_2^*  K_2 -     x W_{0-1} + u_1   W_{-1-1} - u_1^*  W_{1-1}  =0 
\nn
\\
&  -  u_1  K_1   +    x W_{10} + u_2^*  W_{11}  +u_2  W_{1-1}  =0 
\nn
\\
&    u_1^*  K_1  -     x W_{-10} - u_2  W_{-1-1} - u_2^*  W_{-11}=0 \; .
\end{align}
 
As in the case of the instanton bundle (cf. Lemma \ref{lem:5X10}), the fifty vector fields obtained by multiplying the vector fields in \eqref{UWT2-SO} with the generators of   $\O(S^4)$ can be arranged according to the representations 
$[35] \oplus [10] \oplus [5]$ of $so(5)$.  The highest weight vectors for these three representations are
given respectively by: 
\begin{align}\label{Yshat}
 Z_{21} &= u_1 W_{11} 
\, , \nn  \\
 Y_{11} &= x W_{11} + u_1 W_{01} - u_2 W_{10} \, ,  \nn \\
 X_{10} &= u_2^* W_{11} + u_2 W_{1-1} - u_1 K_1 + x W_{10}  ,
\end{align}
with the label denoting the value of the corresponding weight.  
These are the analogous of the vector fields found in \eqref{hwv-Y} for the $SU(2)$ Hopf bundle. 

When represented as vector fields on the bundle, the vector $X_{10}$ is zero  (because of relations analogous to \eqref{relazioni-UWT-2}) and  
the representation $[5]$ it generates  vanishes.    On the other hand, the vector field $ Y_{11}$ is no longer proportional to  $W_{11}$, as it was for the Hopf bundle
 case. 
An explicit computation, using also the condition $(N^\dag N)_{5K} = \delta_{5K}$ yields:
 \begin{align*}
W_{11} &= (x  n_{2K}  - u_2 n_{5K} ) \partial_{3K} +(- x n_{1K} +u_1 n_{5K}) \partial_{4K} +(u_2 n_{1K} - u_1 n_{2K}) \partial_{5K}
\\
{Y}_{11} &= 
u_1 (u_2  n_{1K} - u_1 n_{2K})   \partial_{1K} 
+ u_2( u_2 n_{1K} -u_1  n_{2K} )  \partial_{2K} 
\\ & \quad + (n_{2K} - u_2 \delta_{5K} +u_1^*  ( u_2 n_{1K} - u_1  n_{2K}  )) \partial_{3K} 
\\ & \quad +(-n_{1K} + u_1 \delta_{5K}+u_2^*  (u_2  n_{1K} - u_1 n_{2K})) \partial_{4K} 
+x( u_2 n_{1K} - u_1 n_{2K}) \partial_{5K} .
\end{align*}
The vector field ${Y}_{11}$ generates a different copy of the ten-dimensional representation of $so(5)$ that we denote by $\widehat{[10]}$ to distinguish it  from the ten-dimensional representation   $[10]$ of highest weight vector $W_{11}$.
Notice that while $[10]$ consists of vector fields which are combinations of those in \eqref{UWT2-SO} with coefficients of degree zero in the generators  of $\O(S^4)$, elements of 
$\widehat{[10]}$ are combinations with coefficients of degree one.

Next, in parallel with Lemma \ref{lem2.5}, the representation $[35] \oplus \widehat{[10]}$ of $so(5)$ just found are the ones that occur in the decomposition of the commutators.
\begin{lem}
The commutators of the derivations in  \eqref{UWT2-SO} can be organised according to
the representations $[35] \oplus \widehat{[10]}$ of $so(5)$ of highest weight vectors  $\alpha W_{11}$ and
$Y_{11}$ respectively. 
\begin{proof}
There are 45 non vanishing commutators. The non vanishing commutator with highest weight is 
$[W_{11}, W_{10}]$ with weight $(2,1)$. A direct computation shows that
$$
[W_{11}, W_{10}] = - u_1 W_{11}
$$
and the corresponding representation is the $[35]$ found above. 

The other highest weight vector, of weight $(1,1)$, is
$$
T_{11} = [K_{1}, W_{11}] + [K_{2}, W_{11}] + [W_{10}, W_{01}] =  4( x W_{11} - u_2 W_{10}    + u_1 W_{01}  ) = 4 Y_{11} \, .
$$
Thus the representation  generated by $T _{11}$ is the ten dimensional $\widehat{[10]}$.
\end{proof}
\end{lem}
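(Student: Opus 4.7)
The plan mirrors the strategy of Lemma \ref{lem2.5}: exploit the fact that by Proposition \ref{eqderort} the ten derivations $\{K_1, K_2, W_{\mathsf{r}}\}_{\mathsf{r} \in \Gamma}$ transform in the adjoint representation $[10]$ of $so(5)$, so the space of their pairwise brackets is an $so(5)$-subrepresentation of $\Lambda^2[10]$. The standard decomposition $\Lambda^2[10] = [10] \oplus [35]$ for $so(5)$ has total dimension $10+35=45$, matching the $\binom{10}{2}=45$ commutators, so any candidate identification must exhibit exactly these two irreducible summands, and it suffices to produce one non-zero highest-weight vector in each.

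For the $[35]$-summand, whose highest weight is $(2,1)$, the only commutator of generators carrying that weight is $[W_{11}, W_{10}]$. A direct calculation from \eqref{UWT2-SO} should yield $[W_{11}, W_{10}] = -u_1 W_{11}$, a non-zero multiple of $Z_{21} = u_1 W_{11}$, the highest-weight vector of $[35]$ already identified in \eqref{Yshat}. This pins down the $35$-dimensional summand.

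For the remaining $[10]$-summand, the space of brackets of weight $(1,1)$ is three-dimensional, spanned by $[K_1, W_{11}]$, $[K_2, W_{11}]$, and $[W_{10}, W_{01}]$. Since the $[35]$ summand already contributes a two-dimensional weight-$(1,1)$ subspace, the remaining direction lies in the $[10]$-summand and is characterised by being annihilated by all positive-root actions. I would isolate it by solving $E_{10} \trl T = E_{01} \trl T = 0$; this fixes (up to scalar)
\[
T_{11} := [K_1, W_{11}] + [K_2, W_{11}] + [W_{10}, W_{01}].
\]
One then expands $T_{11}$ explicitly as a derivation on $\O(SO(5,\IR))$ using \eqref{UWT2-SO} and the sphere relations in $\O(S^4)$, aiming to recognise the output as $4 Y_{11}$, the highest-weight vector of $\widehat{[10]}$ given in \eqref{Yshat}.

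The main obstacle is precisely this last identification: unlike the instanton situation of \eqref{rel-2}, here $Y_{11}$ is not a scalar multiple of $W_{11}$, so the computation cannot be short-circuited. One must genuinely expand the three commutators as vector fields on $\O(SO(5,\IR))$ and exploit the orthogonality relations coming from the ideal $J_Q$ to manipulate the coefficients into the form \eqref{Yshat}. Once both highest-weight vectors $u_1 W_{11} \in [35]$ and $Y_{11} \in \widehat{[10]}$ are shown to be non-zero, the dimension count $35 + 10 = 45$ exhausts all pairwise commutators and forbids any further irreducible component, completing the proof.
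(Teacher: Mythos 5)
Your proposal follows essentially the same route as the paper: identify the weight-$(2,1)$ commutator $[W_{11},W_{10}]=-u_1W_{11}$ as the highest-weight vector of $[35]$, and the positive-root-annihilated combination $T_{11}=[K_1,W_{11}]+[K_2,W_{11}]+[W_{10},W_{01}]=4Y_{11}$ as that of $\widehat{[10]}$, with the key point (as you note) that here $Y_{11}$ is \emph{not} proportional to $W_{11}$, unlike the instanton case. Your added dimension count via $\Lambda^2[10]=[10]\oplus[35]$ is a harmless extra justification that the paper leaves implicit in its count of $45$ commutators.
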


By using the decomposition of $\O(S^4)$ in \eqref{splittingS4}, in contrast to Proposition \ref{prop:decoinst}, 
 we  have: 

\begin{prop}\label{prop:splitLieG-SO}
There is a decomposition
$$
\mathrm{aut}_{\O(S^4) }(\O(SO(5, \IR)))=\bigoplus_{n\in \IN_0} \, [d(2,n)] \oplus \widehat{[d(2,n-1)]}~.
$$
Here $[d(2,n)]$, respectively $\widehat{[d(2,n-1)]}$, is the representation of $so(5)$ with highest weight vector $\alpha^n W_{11}$ of weight $(n+1,1)$, respectively $\alpha^{n-1} Y_{11}$ of weight $(n,1)$; they consist of  derivations on $\O(SO(5, \IR))$ which are combinations of the derivations in \eqref{UWT2-SO} with polynomials coefficients of degree $n$   in the generators of $\O(S^4)$.  
\end{prop}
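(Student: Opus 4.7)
The plan is to adapt the argument of Proposition \ref{prop:decoinst} verbatim at the purely representation-theoretic level, and then to highlight the single but crucial difference at the level of vector fields on $\O(SO(5,\IR))$. I would proceed by induction on the degree $n$ in the generators of $\O(S^4)$, using the splitting $\O(S^4)=\bigoplus_{n\in\IN_0}[d(0,n)]$ recorded in \eqref{splittingS4}, and the fact (established in Proposition \ref{eqderort}) that $\mathrm{aut}_{\O(S^4)}(\O(SO(5,\IR)))$ is generated as an $\O(S^4)$-module by the ten vector fields in \eqref{UWT2-SO} transforming in the adjoint representation $[10]$.

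At fixed degree $n$, one considers the $10\cdot d(0,n)$ vector fields obtained by multiplying the generators in \eqref{UWT2-SO} by the polynomials of $[d(0,n)]$, whose highest weight vector is $u_1^n$. The Clebsch--Gordan rule for $so(5)$ gives
\[
[10]\otimes [d(0,n)] \;=\; [d(2,n)]\oplus [d(2,n-1)]\oplus [d(0,n)]\oplus [d(2,n-2)],
\]
with highest weight vectors $u_1^n W_{11}$, $u_1^{n-1}Y_{11}$, $u_1^{n-1}X_{10}$, and $u_1^{n-3}\rho^2 Z_{21}$ respectively, where $\rho^2=2(u_1u_1^*+u_2u_2^*)+x^2=1$ and $Z_{21}, Y_{11}, X_{10}$ are defined in \eqref{Yshat}. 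The combinatorial identity $d(2,n)+d(2,n-1)+d(0,n)+d(2,n-2)=10\cdot d(0,n)$ (which is the same one used in the proof of Proposition \ref{prop:decoinst}) shows that there is no room for further summands. By the relations \eqref{relazioni-UWT2-so}, the vector $X_{10}$ vanishes identically as a derivation of $\O(SO(5,\IR))$, so the $[d(0,n)]$ summand is zero. Moreover $Z_{21}=u_1 W_{11}$ (a trivial consequence of \eqref{Yshat}), so $u_1^{n-3}\rho^2 Z_{21}=u_1^{n-2}W_{11}$ generates a copy of $[d(2,n-2)]$ that has already appeared at degree $n-2$; hence this summand does not produce new derivations.

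The essential difference from the instanton case now enters: here $Y_{11}$ is \emph{not} proportional to $W_{11}$, as is clear from the explicit formulas given before the statement of the proposition, where $Y_{11}$ contains the derivatives $\partial_{1K},\partial_{2K}$ while $W_{11}$ does not. Consequently $u_1^{n-1}Y_{11}$ spans a representation $\widehat{[d(2,n-1)]}$ that is linearly disjoint from the copy $[d(2,n-1)]$ generated at degree $n-1$ by $u_1^{n-1}W_{11}$. Summing over $n$ and discarding the summands already counted at lower degree, one obtains the claimed direct sum decomposition $\mathrm{aut}_{\O(S^4)}(\O(SO(5,\IR)))=\bigoplus_{n\in\IN_0}[d(2,n)]\oplus\widehat{[d(2,n-1)]}$.

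The main technical obstacle is to justify rigorously the linear disjointness of $[d(2,n-1)]$ and $\widehat{[d(2,n-1)]}$ inside the $\O(S^4)$-module of derivations, i.e.\ to rule out any nontrivial relation of the form $u_1^{n-1}Y_{11}= P\,W_{11}+\dots$ with coefficients $P\in \O(S^4)$ of appropriate weight coming from lower-degree representations. This is where one must leave pure representation theory and use the explicit formulas for $W_{11}$ and $Y_{11}$: evaluating both sides on the generators $n_{1K}$ (which $W_{11}$ annihilates but $Y_{11}$ does not) forces $P=0$, and an analogous argument at each intermediate weight of $\widehat{[d(2,n-1)]}$ closes the inductive step.
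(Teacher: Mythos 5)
Your argument is correct and is essentially the one the paper intends: the paper states Proposition \ref{prop:splitLieG-SO} without a formal proof, relying on the preceding lemma (which establishes $X_{10}=0$, $Z_{21}=u_1W_{11}$, and that $Y_{11}$ is a genuinely new vector field not proportional to $W_{11}$) together with the dimension count $d(2,n)+d(2,n-1)+d(0,n)+d(2,n-2)=10\cdot d(0,n)$ carried over verbatim from the proof of Proposition \ref{prop:decoinst}. Your additional remark on verifying the linear disjointness of $[d(2,n-1)]$ and $\widehat{[d(2,n-1)]}$ by evaluating on the generators $n_{1K}$ is a sensible way to make explicit a point the paper leaves to the displayed formulas for $W_{11}$ and $Y_{11}$.
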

 
\subsubsection{Braided derivations and infinitesimal gauge transformations}\label{sec:so4-twist}

Let us now pass to the twisted Hopf--Galois extension $\O(S^{4}_\theta)=\O(SO_\theta(5,\mathbb{R}))^{co \O(SO_\theta(4,\mathbb{R}))} \subset
\O(SO_\theta(5,\mathbb{R}))$. 

We briefly recall its construction from twist deformation (see  \cite[\S 4.1]{ppca} for details). 
Consider the 2-cocycle 
$\gamma: \O(\mathbb{T}^2)\otimes
\O(\mathbb{T}^2)\to \mathbb{C}$ on $\O(\mathbb{T}^2) \subset \O(SO(4, \IR))$, 
given on the generators by
$\gamma(t_1\otimes t_2)=e^{-\pi i \theta}$,
$\gamma(t_2\otimes t_1)=e^{ \pi i \theta}$ and
${\gamma(t_1\otimes t_1)=\gamma(t_2\otimes t_2)=1}$.
Notice that here $\gamma=  \sigma^2$, where $\sigma$ is the cocycle used in \S \ref{sec:braidedHopf}.
We use it to deform   the Hopf
algebra $\O(SO(4, \IR))$ into the noncommutative Hopf algebra
$\O(SO_\theta(4, \IR))$. This latter has same coalgebra structure as the original one but twisted algebra multiplication,
$$
m_{IJ} \dott m_{KL}= \co{T_I}{T_K} m_{IJ} m_{KL}\coin{T_J}{T_L} , \quad
I,J,K,L=1, \dots, 4, 
$$
where  $T := \mathrm{diag}(t_1, t_2, t_1^*,  t_2^*)$. 
(Here again, to conform with the literature we use the subscript $\theta$ 
instead of $\gamma$ for twisted algebras and their multiplications.)
We set  $\defp$ 
$$
\defp_{IJ}:=(\gamma(T_I\otimes T_J))^2
$$ 
so that
$\defp_{I J}= \exp(-2i\pi \theta_{I J})$. Since
$\bar\gamma(T_J\otimes T_L)=\gamma(T_L\otimes T_J)$,
and $\gamma(T_L\otimes T^*_J)=\bar\gamma(T_L\otimes T_J)$ we
  have $\defp_{JI}={\defp_{IJ}}^{-1}=\defp_{IJ'}$.
 It follows that the generators in $\O(SO_\theta(4, \IR))$ satisfy the commutation relations
\beq\label{comM}
m_{IJ} \dott m_{KL} = \defp_{IK}  \defp_{LJ} \, m_{KL} \dott m_{IJ},\quad I,J,K,L=1, \dots, 4 .
\eeq

The twisted antipode turns out to be equal to the starting one,   $S_\theta (m_{IJ}) =S(m_{IJ}).$
The relations  (\ref{idealQ}) become  
\begin{eqnarray}
\label{qidealQ}
M^t \dott Q \dott M =Q~,~~
 M \dott Q \dott M^t=Q~,~~
\end{eqnarray}
together with ${\det}_{\theta}(M)=1$.

Next, using the projection $\pi$ in \eqref{pi} we lift the 
 2-cocycle from the subgroup $\O(SO(4, \IR))$
to the Hopf algebra $\O(SO(5, \IR))$ (or equivalently we can consider the same torus
$\mathbb{T}^2$ embedded in $SO(5)$). The resulting Hopf algebra is
denoted by $\O(SO_\theta(5, \IR))$ and has 
generators    $n_{IJ}$ with relations 
\begin{equation}\label{comN}
n_{IJ} \dott n_{KL}= \defp_{IK}\defp_{LJ}  n_{KL} \dott n_{IJ},\quad I,J,K,L=1, \dots, 5 ,
\end{equation}
where now $T := \mathrm{diag}(t_1, t_2,
t_1^* ,  t_2^*,1),$ and orthogonality conditions
$N^t \dott Q \dott N =Q$ and $N \dott Q \dott N^t=Q$, with ${\det}_{\theta}(N)=1$.

The quantum homogeneous space $\O(S^{4})$ is deformed into the
quantum homogeneous space $\O(S_\theta^{4})\subset \O(SO_\theta(5, \IR))$, consisting of coinvariants of $\O(SO_\theta(5, \IR))$ 
under the $\O(SO_\theta(4, \IR))$-coaction. The algebra $\O(S_\theta^{4})$
  is
generated by five elements $\{u_I\}_{I=1,..., 5}=\{u_i, u_{i'}=u^*_i, x\}_{i=1,2}$ with  commutation relations, obtained from \eqref{comN},
\beq
u_I\dott u_J=\defp_{IJ}u_J\dott u_I\label{crqos}
\eeq
The orthogonality conditions   imply the sphere
relation 2 $\sum\nolimits_{i=1}^{2}  u_i^* \dott u_i +x^2=1$. Moreover, 
from the general theory developed in \cite{ppca}, 
the algebra extension $\O(S_\theta^{4})\subset \O(SO_\theta(5, \IR))$ 
of the quantum homogeneous space $\O(S_\theta^{4})= \O(SO_\theta(5, \IR))^{co\O(SO_\theta(4, \IR))}$ is still  Hopf--Galois.

When considering the braided Lie algebra of infinitesimal gauge
transformations of this Hopf--Galois extension, it is useful to think
of the latter as the result of a double deformation done with
commuting left coaction of $\O(\mathbb{T}^2)$ and right coaction of $\O(\mathbb{T}^2) \subset \O(SO(4, \IR))$.  
This second $\O(\mathbb{T}^2)$   disappears when considering $\O(SO(4, \IR))$ equivariant quantities. This is the case for the algebra  
$\O(S^{4})$ of coinvariant elements. It is also the case for the equivariant derivations in Proposition \ref{eqderort} and it is in this sense that those derivations can be thought of as having trivial right $\IZ^2$-grading (they do not move the second index in a generator $n_{JK}$ as already mentioned). 

Thus for the braided Lie algebra of infinitesimal gauge transformations of the Hopf--Galois extension 
$\O(S_\theta^{4})= \O(SO_\theta(5, \IR))^{co\O(SO_\theta(4, \IR))}$ we just need to consider 
the left torus action and the construction goes exactly as for the $SU(2)$ instanton case of the previous section. In particular we can repeat the construction 
in \S \ref{sec:braidedHopf} verbatim by considering the maximal torus
 $\mathbb{T}^2 \subset SO(5)$, generated by the right
 invariant vector fields
 $H_1$ and $H_2$ of $SO(5)$, and use  
the twist
\beq\label{F-so}
\F:= e^{\pi i\theta (H_1 \ot H_2 -H_2 \ot H_1)} \in K\otimes K\subset
U(so(5))^{op}\otimes U(so(5))^{op}\; , \quad \theta \in \IR ,
\eeq
of $K$, where $K$ is generated by the right invariant vector fields
$H_1$ and $H_2$. Hence $K$ is the universal enveloping algebra of the Cartan subalgebra of $so(5)^{op}$, the
Lie algebra $so(5)$ being that of left invariant vector fields on
$SO(5)$, cf. \cite[\S 7.1]{pgc-main}.
The braided Lie algebras of  braided derivations and of infinitesimal gauge transformations of the Hopf--Galois extension 
 $\O(S^4_\theta) \subset \O(SO_\theta(5, \IR))$ are the twist (left)  deformations of 
 $ {\rm{Der}}_{\M^H}(\O(SO(5, \IR)))$ and of 
 $\mathrm{aut}_{\O(S^4) }(\O(SO(5, \IR)))$ respectively (with the right torus action playing no role).

As  $\O(S^4)$-modules, 
the Lie algebra ${\rm{Der}}_{\M^H}(\O(SO(5, \IR)))$, $H=\O(SO(4, \IR))$, is generated by the operators 
in \eqref{der-sopra-SO} while the Lie algebra of infinitesimal gauge transformations 
 $\mathrm{aut}_{\O(S^4)}(\O(SO(5, \IR)))$  is generated by the operators  in \eqref{UWT2-SO}. 

From Corollary \ref{DAmodiso} we have that
${\rm{Der}}_{\M^H}(\O(SO_\theta(5, \IR))) = \dd(  {\rm{Der}}_{\M^H}(\O(SO(5, \IR) )_\F)$ for 
$H=O(SO_\theta(4, \IR))$. Thus: in parallel with Proposition
\ref{derivationsinstanton},  we have
\begin{prop}\label{der-hom}
The braided Lie algebra ${\rm{Der}}_{\M^H}(SO_\theta(5, \IR))$ of equivariant derivations 
of the  $\O(SO_\theta(4, \IR))$-Hopf--Galois extension  $\O(S^4_\theta) \subset \O(SO_\theta(5, \IR))$
is generated, as an  $\O(S^4_\theta)$-module, by elements
\begin{equation}\label{DgenerDer-SO}
\widetilde{H}_j :=\dd (H_j) \, , \quad  \widetilde{E}_\mathsf{r} := \dd(E_\mathsf{r})\;
, \quad j=1,2 \, , \quad \mathsf{r} \in \Gamma
\end{equation}
with bracket 
\begin{align*}
&[\widetilde{H}_1,\widetilde{H}_2]_\rF =\dd([{H}_1,{H}_2]) = 0 \; ; \qquad
&&
[\widetilde{H}_j,\widetilde{E}_\mathsf{r}]_\rF = \dd([ {H}_j,  {E}_\mathsf{r}]) 
= r_j \widetilde{E}_\mathsf{r}\; ; 
\\
&[\widetilde{E}_\mathsf{r}, \widetilde{E}_{-\mathsf{r}}]_\rF = \dd([ {E}_\mathsf{r},  {E}_{-\mathsf{r}}])
= \sum\nolimits_j r_j \widetilde{H}_j   \; ;
&&
[\widetilde{E}_\mathsf{r}, \widetilde{E}_\mathsf{s}]_\rF = e^{- i \pi \theta {\mathsf{r}} \wedge {\mathsf{s}} } \, 
\dd([ {E}_\mathsf{r},  {E}_\mathsf{s}]) = e^{- i \pi \theta {\mathsf{r}} \wedge {\mathsf{s}} } N_\textsf{rs} \widetilde{E}_\textsf{r+s}\; 
\end{align*}
with $N_\textsf{rs}=0$ if $\textsf{r+s}$ is not a root. 
\end{prop}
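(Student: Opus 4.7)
The plan is to mimic the argument used for Proposition \ref{derivationsinstanton}, obtaining the braided Lie algebra ${\rm{Der}}_{\M^H}(\O(SO_\theta(5,\IR)))$ as the image under the isomorphism $\dd$ of the twist $({\rm{Der}}_{\M^H}(\O(SO(5,\IR))))_\F$. Since the classical $K$-module structure is the one generated by the restrictions to $\O(SO(5,\IR))$ of the right-invariant vector fields $H_1,H_2$ on $SO(5)$, and the twist $\F$ in \eqref{F-so} lives in $K\otimes K$, we are in exactly the setting of Section \ref{sec:TBLA}.

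First, I would invoke Proposition \ref{autautF} (restricted to the braided Lie algebra of equivariant derivations rather than to $\aut{A}$), which applied to the $K$-equivariant Hopf--Galois extension $\O(S^4)\subset\O(SO(5,\IR))$ gives a $(K_\F,\r_\F)$-braided Lie algebra isomorphism
\[
\dd:\bigl({\rm{Der}}_{\M^H}(\O(SO(5,\IR)))_\F,[\,,\,]_\F\bigr) \xrightarrow{\ \sim\ }
\bigl({\rm{Der}}_{\M^H}(\O(SO_\theta(5,\IR))),[\,,\,]_{\rF}\bigr).
\]
By Corollary \ref{DAmodiso} this is simultaneously an isomorphism of $\O(S^4_\theta)$-modules. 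Since Proposition \ref{eqderort} (and the comment preceding it) already identifies $H_1,H_2,\{E_{\mathsf{r}}\}_{\mathsf{r}\in\Gamma}$ as a generating set of ${\rm{Der}}_{\M^H}(\O(SO(5,\IR)))$ as an $\O(S^4)$-module, the image elements $\widetilde H_j:=\dd(H_j)$ and $\widetilde E_\mathsf{r}:=\dd(E_\mathsf{r})$ generate ${\rm{Der}}_{\M^H}(\O(SO_\theta(5,\IR)))$ as an $\O(S^4_\theta)$-module.

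Next I would compute the brackets on these generators. The crucial observation is that $H_1,H_2,E_\mathsf{r}$ are eigenvectors for the left torus $K=U(\mathbb{T}^2)$ action: $H_j$ has weight $(0,0)$ and $E_\mathsf{r}$ has weight $\mathsf{r}$. Hence the action of $\bF=\exp\bigl(-\pi i\theta(H_1\otimes H_2-H_2\otimes H_1)\bigr)$ reduces to a phase, and Proposition \ref{prop:gf} gives directly
\[
[H_j,H_k]_\F=[H_j,H_k],\qquad [H_j,E_\mathsf{r}]_\F=[H_j,E_\mathsf{r}],\qquad
[E_\mathsf{r},E_\mathsf{s}]_\F=e^{-i\pi\theta\,\mathsf{r}\wedge\mathsf{s}}[E_\mathsf{r},E_\mathsf{s}],
\]
where the classical brackets on the right are those of $so(5)$ given in \eqref{so5}. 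Applying the algebra map $\dd$ to both sides (and using that $\dd$ is a braided Lie algebra isomorphism, so $\dd([\,,\,]_\F)=[\dd(\,),\dd(\,)]_{\rF}$) yields precisely the four identities stated in the proposition. The bracket of generic elements $b\!\:\dott\!\:\widetilde X$, $b'\!\:\dott\!\:\widetilde X'$ is then determined from these by the braided Leibniz rule \eqref{LieAmodF+} transported across $\dd$ via Corollary \ref{DAmodiso}.

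The only step that requires some care, rather than a direct transcription from the instanton case, is checking that the twist \eqref{F-so} really lives in $K\otimes K$ with $K$ the universal enveloping algebra of the Cartan of the \emph{left-invariant} vector fields on $SO(5)$ (so that its elements commute with the right $\O(SO_\theta(4,\IR))$-coaction, keeping us inside the equivariant derivations). This was the subtle point in \S\ref{sec:so4-twist} and would be the main obstacle here: once it is settled, the proposition follows formally from Proposition \ref{autautF}, Corollary \ref{DAmodiso} and Proposition \ref{prop:gf} applied to the eigenvector generators $H_j,E_\mathsf{r}$.
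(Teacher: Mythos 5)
Your argument is correct and follows the paper's own (very terse) justification of this proposition: the paper likewise obtains it by combining Corollary \ref{DAmodiso} with the observation that the right $\O(\mathbb{T}^2)\subset\O(SO(4,\IR))$ half of the double deformation acts trivially on $\O(SO(4,\IR))$-equivariant derivations (they do not move the second index of $n_{JK}$), so that only the left torus twist matters and the computation of \S\ref{sec:braidedHopf} carries over verbatim, the brackets then following from the eigenvector/phase argument exactly as you describe. One small slip in your final paragraph: the Cartan generators $H_1,H_2$ entering the twist \eqref{F-so} are the \emph{right}-invariant vector fields on $SO(5)$ (elements of $so(5)^{op}$ in the paper's convention), which is precisely why they commute with the right $\O(SO_\theta(4,\IR))$-coaction — not the left-invariant ones as you wrote.
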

Similarly, Proposition \ref{autautF}
yields $\mathrm{aut}_{\O(S^4_\theta) }(\O(SO_\theta(5, \IR)) )
=\dd(\mathrm{aut}_{\O(S^4_\theta) }(\O(SO(5, \IR))_\theta)$.
Thus, in parallel with Proposition \ref{gaugeinstanton}:
\begin{prop}\label{derver-hom}
The braided Lie algebra $\mathrm{aut}_{\O(S^4_\theta) }(\O(SO_\theta(5, \IR)) )$ 
of infinitesimal gauge transformations of the $\O(SO_\theta (4, \IR))$-Hopf--Galois extension 
$\O(S^4_\theta) \subset \O(SO_\theta(5, \IR))$
is generated, as an  $\O(S^4_\theta)$-module, by the elements
\begin{equation}\label{Dgenerators-SO}
\widetilde{K}_j :=\dd (K_j) \, , \quad  \widetilde{W}_\mathsf{r} := \dd(W_\mathsf{r})\;
, \quad j=1,2 \, , \quad \mathsf{r} \in \Gamma
\end{equation}
with bracket 
\begin{align*}
[\widetilde{K}_1,\widetilde{K}_2]_\rF &=\dd([{K}_1,{K}_2]) \; ; \quad
[\widetilde{K}_j,\widetilde{W}_\mathsf{r}]_\rF = \dd([{K}_j,{W}_\mathsf{r}]) \; ; \\
[\widetilde{W}_\mathsf{r}, \widetilde{W}_\mathsf{s}]_\rF &= e^{- i \pi \theta {\mathsf{r}} \wedge {\mathsf{s}} } \, \dd([{W}_\mathsf{r}, {W}_\mathsf{s}]) \; .
\end{align*}
The braided Lie bracket of generic elements in $\mathrm{aut}_{\O(S^4_\theta)
}(\O(SO_\theta(5, \IR)) )$ is given by
\begin{equation}\label{Lierelations1}
    [b \dott \widetilde{X}, b'\dott \widetilde{X}']_{\r_\F} = b\dott (\rF_\alpha\trl b')  \dott [\rF^\alpha\trl
    \widetilde{X},\widetilde{X}']_{\r_\F}~   
  \end{equation}
for $b,b'\in \O(S^4_\theta)$ and $\widetilde{X},\widetilde{X}'$ in the linear span of the generators in \eqref{Dgenerators-SO}.
\end{prop}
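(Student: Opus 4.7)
The proof will follow the same strategy as Proposition \ref{gaugeinstanton}, leveraging the fact that the whole setup is the twist deformation of the classical extension $\O(S^4)\subset \O(SO(5,\IR))$ via the element $\F$ in \eqref{F-so}, exactly as in \S \ref{sec:braidedHopf}. The starting data are Proposition \ref{eqderort} (classical generation by $K_j, W_\mathsf{r}$) and the fact that the right $\mathbb{T}^2$-coaction plays no role on equivariant derivations (noted in \S \ref{sec:so4-twist}), so only the left torus twist matters.

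First, I would invoke Proposition \ref{autautF} to identify $\mathrm{aut}_{\O(S^4_\theta)}(\O(SO_\theta(5,\IR)))$ with the image under $\dd$ of the twisted classical braided Lie algebra $\mathrm{aut}_{\O(S^4)}(\O(SO(5,\IR)))_\F$. By Proposition \ref{eqderort} the latter is generated, as an $\O(S^4)_\F = \O(S^4_\theta)$-module, by the operators $K_j, W_\mathsf{r}$ in \eqref{UWT2-SO}, with module action \eqref{acdotFD}. Applying $\dd$ and using Corollary \ref{DAmodiso} (which asserts that $\dd$ respects the module structures, turning $\cdot_\F$ into $\dott$) then gives generation of $\mathrm{aut}_{\O(S^4_\theta)}(\O(SO_\theta(5,\IR)))$ as an $\O(S^4_\theta)$-module by $\widetilde{K}_j := \dd(K_j)$ and $\widetilde{W}_\mathsf{r} := \dd(W_\mathsf{r})$.

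Next, for the bracket, I would use Proposition \ref{prop:gf}: since $K_j, W_\mathsf{r}$ are eigen-operators for $H_1, H_2$ (Proposition \ref{eqderort}, cf.\ \eqref{adjSO}) and $\F$ is expressed purely through $H_1, H_2$, diagonalising $\F$ on weight vectors gives $[W_\mathsf{r}, W_\mathsf{s}]_\F = e^{-i\pi\theta\, \mathsf{r}\wedge \mathsf{s}}[W_\mathsf{r}, W_\mathsf{s}]$, while brackets involving $K_j$ (of weight zero) are undeformed. This is precisely the analog of \eqref{aut-twist}. Applying the algebra isomorphism $\dd$ of Theorem \ref{thm:Dalg} (which satisfies $\dd([\cdot,\cdot]_\F) = [\dd(\cdot), \dd(\cdot)]_{\r_\F}$) then delivers the three bracket formulas claimed in the statement.

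Finally, for the general bracket formula \eqref{Lierelations1} on arbitrary elements $b \dott \widetilde{X}, b' \dott \widetilde{X}'$, I would start from the compatibility \eqref{LieAmodF+} between the braided bracket $[~,~]_\F$ and the $A_\F$-module structure $\cdot_\F$ on $\Der{A}_\F$, written for $A = \O(SO(5,\IR))$. The first term $a\cdot_\F [\psi,a']_\F \cdot_\F \psi'$ and the ``diagonal'' correction in \eqref{LieAmodF+} vanish here because $\widetilde{X}, \widetilde{X}'$ are vertical (so $[\widetilde X, b']_{\r_\F} = \widetilde X(b') = 0$ for $b' \in \O(S^4_\theta)$, and similarly for the mixed term). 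What survives is exactly the single quasi-commutative term in \eqref{Lierelations1}. Translating to the $\dott$-side via $\dd$ and Corollary \ref{DAmodiso} yields the stated identity. The only thing to be careful about is cleanly verifying the vanishing of the two ``extra'' summands of \eqref{LieAmodF+} by using verticality—this is the mildest of the steps and is the only point where one could slip on phases.
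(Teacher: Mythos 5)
Your proposal is correct and follows essentially the same route as the paper: the paper proves this proposition by invoking Proposition \ref{autautF} to write $\mathrm{aut}_{\O(S^4_\theta)}(\O(SO_\theta(5,\IR)))=\dd(\mathrm{aut}_{\O(S^4)}(\O(SO(5,\IR)))_\F)$ and then repeating verbatim the argument of Proposition \ref{gaugeinstanton} — classical generators from Proposition \ref{eqderort}, the twisted brackets obtained by diagonalising $\F$ on the $(H_1,H_2)$-weight vectors as in \eqref{aut-twist}, and the reduction of \eqref{LieAmodF+} to the single surviving term by verticality. Your observation that only the left torus twist matters for the equivariant derivations is exactly the point the paper makes in \S\ref{sec:so4-twist} before stating the result.
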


\appendix
\section{Decomposition of  $\O(S^4)$}\label{app:decS4} 
We thus study how $\O(S^4)$  decomposes in the sum of 
irreducible representations $[d(s,n)]$ of $so(5)$. 
In the algebra $\O(\IR^5)$, both  $\alpha$ and  $\rho^2=\alpha\alpha^*+\beta \beta^*+x^2$ are annihilated by all 
raising operators $W_\mathsf{r}$  (the ones for positive roots), and thus their powers and products. 
They are of weight $(1,0)$ and $(0,0)$ respectively.

Let $V^{\scriptscriptstyle{(r)}}$ be the $\binom{4+r}{r}$-dimensional vector space 
of monomials of degree $r$ in the indeterminates $\alpha,\alpha^*,\beta,\beta^*,x$.   
The  vectors $\alpha^{r-2k}\rho^{2k}$ are highest weight vectors of $V^{\scriptscriptstyle{(r)}}$ and
\beq\label{splittingR5}
 V^{\scriptscriptstyle{(r)}} =  \bigoplus_{k=0}^{\lfloor \tfrac{r}{2}\rfloor} [d(0,r-2k)] 
\eeq
where $[d(0,r-2k)]$ is the irreducible representation  with highest weight vector $\alpha^{r-2k}\rho^{2k}$ of weight $(r-2k,0)$, 
and of dimension  $d(0,r-2k)=\tfrac{1}{6} (1+r-2k)(2+r-2k)(3+2r-4k)$. Indeed, 
for $r=2 m$, 
\begin{align*}
\sum\nolimits_{k=0}^m d(0,2m-2k)&= \sum\nolimits_{k=0}^m d(0,2m) = \tfrac{1}{3} \sum\nolimits_{k=0}^m (3+ 13 k + 18 k^2 +8 k^3)
\\
&=
\tfrac{1}{3} \left(
3 + 13 \, \tfrac{m(m+1)}{2}  + 18 \, \tfrac{m(m+1)(2m+1)}{6}  + 8  \, \tfrac{m^2(m+1)^2}{4}  
\right) 
\\
&=
\tfrac{1}{6} (m+1)(6 + 19 m +16 m^2 + 4 m^3) \, ,
\end{align*}
which coincides with the dimension
$ \binom{4+2m}{2m}$ of $V^{\scriptscriptstyle{(2m)}}$. Similar computations go for $r$ odd.

For $\alpha,\alpha^*,\beta,\beta^*,x$ coordinate functions on $\O(S^4)$, $\rho^2=1$ and with fixed  $r$, all representations $[d]$ in the decomposition \eqref{splittingR5} already appeared in $V^{\scriptscriptstyle{(r')}} $  for some $r' <r$, but for $[d(0,r)]$.
We hence conclude that
$$
 \O(S^4)  =  \bigoplus_{n \in \IN_0} [d(0,n)] 
$$
where $[d(0,n)]$ has highest weight vector $\alpha^{n}$ of weight $(n,0)$.  

\section{Matrix representation of the braided Lie algebra $so_\theta(5)$}\label{app:mat-rep-so5}

We give a matrix representation of the braided Lie algebra $so_\theta(5)$ as defined in \eqref{sotheta5}. 

Consider weights $\mu, \nu= (0,0), (\pm 1,0), (0,\pm 1)$ 
with order 
$$
(1,0)= 1 \, , \;  (0,1) = 2 \, , \;  (-1,0) = 3 \, , \;  (0,-1) = 4 \, , \;  (0,0) = 5.
$$ 
Using this order for an identification between weights and row/column indices, define matrices 
$\el_{\mu \nu}$ of components 
$$
(\el_{\mu \nu})_{\sigma \tau} := \lambda^{\mu \wedge \nu} \delta_{\mu \sigma} \delta_ {\nu \tau} .
$$
The product of  two such matrices is found to be 
$$
\el_{\mu \nu} \el_{\tau \sigma} = \lambda^{  (\mu - \nu) \wedge ( \tau - \sigma )} \delta_{\nu \tau} \el_{\mu \sigma} .
$$
The minus signs in the exponents are due to $\el_{\mu \nu}$ having weight $\mu-\nu$ (cf.~\eqref{comM}).

A direct computation shows the following:
 \begin{lem}\label{lemmab}
 The matrices
 \begin{align*}
 & \wee_1 := \el_{11}- \el_{33}
 &&  
\wee_2  :=   \el_{22}- \el_{44}
\nn
\\
&  \wee_{10}  :=  \el_{15}- \el_{53}
&&  
 \wee_{-10}  :=   \el_{51}- \el_{35}
\nn
\\
&  \wee_{11} :=  \el_{14}- \el_{23}
&&  
 \wee_{-1-1} :=  \el_{41}- \el_{32}
\nn
\\
&  \wee_{01}  :=   \el_{25}- \el_{54}
&&  
 \wee_{0-1}  :=   \el_{52}- \el_{45}
\nn
\\
&  \wee_{1-1}  :=  \el_{12}- \el_{43}
&&  
 \wee_{-11}  :=  \el_{21}- \el_{44}
 \end{align*}
 give a matrix representation of the algebra $so_\theta(5)$ in \eqref{sotheta5} with the identification $\wee_\mathsf{r} \leftrightarrow \widetilde{E}_\mathsf{r}$ and setting $[\wee_\mathsf{r}, \wee_\mathsf{s}]_\rF := \wee_\mathsf{r}\wee_\mathsf{s} - \lambda^{2 \mathsf{r} \wedge \mathsf{s}} \wee_\mathsf{s}\wee_\mathsf{r}$ for the braided commutator of matrices.
 \end{lem}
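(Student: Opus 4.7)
The plan is to verify by direct case analysis that the matrices $\wee_1,\wee_2,\wee_\mathsf{r}$ satisfy the four families of defining relations of $so_\theta(5)$ given in \eqref{sotheta5}, using the product formula $\el_{\mu\nu}\el_{\tau\sigma}=\lambda^{(\mu-\nu)\wedge(\tau-\sigma)}\delta_{\nu\tau}\el_{\mu\sigma}$ already established just before the statement. I begin by remarking that each generator has the uniform skew form $\wee_\mathsf{r}=\el_{\mu\nu}-\el_{-\nu,-\mu}$ with $\mu-\nu=\mathsf{r}$, so that $\wee_\mathsf{r}$ carries total weight $\mathsf{r}$ under conjugation by the diagonal $\wee_1,\wee_2$. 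This is just the $so(5)$ skew pattern under the involution $1\!\leftrightarrow\!3$, $2\!\leftrightarrow\!4$, $5\!\leftrightarrow\!5$ fixed by $Q$, and it means each product $\wee_\mathsf{r}\wee_\mathsf{s}$ expands into four summands, of which typically at most one is nonzero because the index matching condition $\nu=\tau$ in the product formula is restrictive.

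In detail I would dispatch the relations in four groups. First, $[\wee_1,\wee_2]_{\rF}$ is the ordinary commutator (both weights are $0$), and each of the four expanded terms of $\wee_1\wee_2$ and of $\wee_2\wee_1$ vanishes by the $\delta_{\nu\tau}$ factor, giving $0$. Second, $[\wee_j,\wee_\mathsf{r}]_{\rF}$ reduces to the ordinary commutator for the same reason, and a two-line computation (one surviving term on each side) produces exactly $r_j\wee_\mathsf{r}$. Third, for $[\wee_\mathsf{r},\wee_{-\mathsf{r}}]_{\rF}$ each product contributes two surviving diagonal terms with trivial $\lambda$-exponent (the relevant wedges vanish on collinear weights); the common $\el_{55}$ piece cancels between $\wee_\mathsf{r}\wee_{-\mathsf{r}}$ and $\wee_{-\mathsf{r}}\wee_\mathsf{r}$, leaving precisely $r_1\wee_1+r_2\wee_2$. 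Fourth, for $[\wee_\mathsf{r},\wee_\mathsf{s}]_{\rF}$ with $\mathsf{s}\neq\pm\mathsf{r}$, weight considerations show both $\wee_\mathsf{r}\wee_\mathsf{s}$ and $\wee_\mathsf{s}\wee_\mathsf{r}$ are proportional to generators of weight $\mathsf{r}+\mathsf{s}$; when $\mathsf{r}+\mathsf{s}$ is not a root all four expanded terms in each product vanish by index mismatch (as in the check $\wee_{10}\wee_{11}=0=\wee_{11}\wee_{10}$), and when $\mathsf{r}+\mathsf{s}$ is a root a single surviving term in each product reassembles into $\wee_{\mathsf{r}+\mathsf{s}}$. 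The prototype is $\wee_{10}\wee_{01}=\lambda\,\el_{14}$ and $\wee_{01}\wee_{10}=\lambda^{-1}\el_{23}$, which combine in the braided commutator as $\lambda(\el_{14}-\el_{23})-\lambda^{2}\lambda^{-1}\cdot 0=\lambda\,\wee_{11}$, matching $\lambda^{\mathsf{r}\wedge\mathsf{s}}N_{\mathsf{rs}}\wee_{\mathsf{r}+\mathsf{s}}$ with $N_{(10)(01)}=1$.

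Two symmetries cut the workload substantially: the involution $\mathsf{r}\mapsto-\mathsf{r}$ corresponding to matrix transposition combined with $Q$-conjugation exchanges positive and negative roots, so relations for $(\mathsf{r},\mathsf{s})$ determine those for $(-\mathsf{r},-\mathsf{s})$; and the reflection $(r_1,r_2)\mapsto(r_2,r_1)$ exchanging the two Cartan directions halves what remains. The main obstacle I anticipate is purely bookkeeping: matching the signs $N_{\mathsf{rs}}\in\{\pm 1\}$ coming from the matrix computation with those inherited from \eqref{so5} via the derivations \eqref{der-sopra1}–\eqref{der-sopra2}, since the skew convention $\wee_\mathsf{r}=\el_{\mu\nu}-\el_{-\nu,-\mu}$ fixes the normalisation once and for all but propagates signs through the nine nontrivial root-root brackets. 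This is a finite check — at most $\binom{10}{2}=45$ brackets, most of them zero by weight counting — and no substantive difficulty beyond the arithmetic of wedges and Kronecker deltas arises.
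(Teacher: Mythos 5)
Your proposal is correct and is essentially the paper's own argument: the paper gives no proof beyond the phrase ``a direct computation shows'', and your plan is precisely that computation, organised by the product formula for the $\el_{\mu\nu}$, weight counting, and the transposition/reflection symmetries. One bookkeeping slip in your prototype: expanding $\wee_{10}\wee_{01}=(\el_{15}-\el_{53})(\el_{25}-\el_{54})$, the unique surviving term is the cross term $-\el_{15}\el_{54}=-\lambda\,\el_{14}$ (and likewise $\wee_{01}\wee_{10}=-\lambda^{-1}\el_{23}$), so the braided commutator is $-\lambda\,\wee_{11}$ rather than $+\lambda\,\wee_{11}$ — exactly the kind of sign-normalisation of the $N_{\mathsf{rs}}$ that you already flag as the residual bookkeeping.
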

The matrices  $\wee_\mathsf{r}$ in the lemma are of the form
\beq
\wee_{\mu, \nu}:= \el_{\mu \nu^*} -   \el_{\nu \mu^*} \; , \qquad \mbox{for} \quad \mu +\nu=\mathsf{r} \, .
\eeq
The braided commutator $[\wee_{\mu, \nu}, \wee_{\tau, \sigma}]_{\r_\F} =\wee_{\mu, \nu} \wee_{\tau, \sigma}   - \lambda^{2 (\mu + \nu) \wedge (\tau+ \sigma))}\wee_{\tau, \sigma}  \wee_{\mu, \nu} 
$ 
is found to be
\beq\label{so5-rep}
 [\wee_{\mu, \nu}, \wee_{\tau, \sigma}]_{\r_\F} 
= \lambda^{  (\mu +\nu)\wedge ( \tau+\sigma )}  \big(
   \delta_{\nu^* \tau }   \wee_{\mu, \sigma} 
-  \delta_{\mu^* \tau }   \wee_{\nu, \sigma}
  -   \delta_{\nu^* \sigma }     \wee_{\mu, \tau}  
  +     \delta_{\sigma^* \mu }      \wee_{\nu, \tau}  
  \big).
 \eeq
In the classical limit, $\lambda=1$, the matrices $\el_{\mu \nu}$ reduce to the usual elementary matrices and those in 
Lemma \ref{lemmab} give the defining matrix representation of the Lie algebra $so(5)$.

\section{The Lie algebra  $\mathrm{aut}_{\O(S^4)}(\O(S^7))$}\label{app:commut}
In the following table we list the action of   the generators \eqref{UWT2} of
  the Lie algebra of infinitesimal gauge transformations  on the generators of the algebra
$\O(S^7)$.
\begin{table}[H]\caption{Vertical vector fields on $\O(S^7)$}
  \label{table:vfa}
\bigskip
\begin{center}
\scalebox{0.9}{
\begin{tabular}{c||c|c|c|c}
 & $ z_1 $ & $ z_2 $ & $ z_3 $ & $ z_4    $
\\ \hline \rule[-4mm]{0mm}{1cm}
$K_1$ & $- x z_1 + \beta^*   z_4$ & $ x  z_2 + \beta z_3 $ & $ - x z_3 + \beta^*   z_2  $ & $ x z_4 +  \beta z_1  $
\\ \hline  \rule[-4mm]{0mm}{1cm}
$K_2 $ & $  x z_1 +  \alpha  z_3$&$ - x z_2  -  \alpha^*  z_4 $ & $ - x z_3 + \alpha^*   z_1 $&$ x z_4 -    \alpha  z_2$
\\ \hline  \rule[-4mm]{0mm}{1cm}
$ W_{01} $&$  \stwo \beta z_1  -  \sqrt{2} \alpha z_2  $&$ - \stwo \beta z_2  $&$ - \stwo  \beta z_3  -  \sqrt{2} \alpha^*   z_4 $&$ \stwo \beta z_4 $
\\ \hline   \rule[-4mm]{0mm}{1cm}
$W_{0 -1} $&$  \stwo  \beta^* z_1 $&$ - \stwo  \beta^* z_2  - \sqrt{2} \alpha^*  z_1 $&$ - \stwo  \beta^* z_3 $&$ \stwo  \beta^* z_4  - \sqrt{2} \alpha z_3$
\\ \hline  \rule[-4mm]{0mm}{1cm}
$ W_{10} $&$   - \stwo \alpha z_1 $&$  \stwo \alpha z_2 - \sqrt{2} \beta z_1  $&$ - \stwo \alpha z_3 + \sqrt{2} \beta^*  z_4 $&$ \stwo \alpha z_4   $
\\ \hline  \rule[-4mm]{0mm}{1cm}
$W_{-10} $&$   - \stwo \alpha^*  z_1  - \sqrt{2} \beta^* z_2  $&$  \stwo \alpha^*  z_2  $&$ - \stwo \alpha^*  z_3 $&$ \stwo \alpha^*  z_4 + \sqrt{2} \beta z_3 $
\\ \hline  \rule[-4mm]{0mm}{1cm}
$ W_{11}$ & $    \alpha  z_4 $ & $ \beta z_4  $&$ - 2x z_4 + \alpha z_2  -\beta z_1 $&$0  $
\\ 
&&& $= (1-x) z_4$ &
\\ \hline   \rule[-4mm]{0mm}{1cm}
$W_{-1-1}  $&$ - \beta^* z_3   $&$ \alpha^* z_3 $&$ 0 $&$ - 2x z_3 + \alpha^* z_1 + \beta^* z_2$
\\
&&&&=$ (1-x) z_3$
\\ \hline  \rule[-4mm]{0mm}{1cm}
 $W_{1-1}$&$    0 $&$ 2x  z_1 - \beta^*  z_4 +\alpha  z_3 $&$  \beta^*  z_1   $&$ \alpha  z_1$
\\
&& $= (1+x) z_1  $ &&
\\ \hline    \rule[-4mm]{0mm}{1cm}
$ W_{-1 1}$&$  2x  z_2  + \beta z_3  +\alpha^*    z_4 $&$ 0 $&$  \alpha^*  z_2  $&$- \beta z_2$
\\
& $= (1+x) z_2$&&&
\\
\end{tabular} 
}
\end{center}
\end{table}

We list the commutators of the generators \eqref{UWT2} of the Lie algebra of infinitesimal gauge transformations 
 $\mathrm{aut}_{\O(S^4)}(\O(S^7))$, obtained by direct computation. 
 \begin{align*}
& [K_1, K_2] = \sqrt{2} ( \alpha^*  W_{10} - \alpha W_{-10} )
 \\
&[K_1, W_{01}] = - \sqrt{2} \beta K_2 + 2 x    W_{01} 
&&[K_2, W_{01}] = \sqrt{2} ( \alpha^*  W_{11} + \alpha W_{-11} ) 
 \\
&[K_1, W_{1-1}] =- 2 x W_{1-1} + \sqrt{2} \beta^* W_{10}
&&[K_2, W_{1-1}]=2 x W_{1-1} - \sqrt{2} \alpha W_{0-1} 
 \\
&[K_1, W_{10}] =\sqrt{2} (- \beta^*  W_{11} +\beta W_{1-1} )
&&[K_2, W_{10}]= 2x W_{10} - \sqrt{2}  \alpha  K_1
 \\
&[K_1, W_{11}] = 2 x W_{11} - \sqrt{2} \beta W_{10}
&&[K_2, W_{11}]=2 x W_{11} + \sqrt{2} \alpha W_{01}   
 \end{align*}
 \begin{align*}
 & 
 [W_{01}, W_{1-1}]= \sqrt{2} \beta W_{1-1} + \sqrt{2} \alpha (-K_1 +K_2 )
&& 
[W_{-1-1}, W_{01}]=     \sqrt{2} \beta W_{-1-1} - \sqrt{2} \alpha^* (K_1 +K_2 )
\\
&
[W_{01}, W_{10}]=  \sqrt{2} (\beta W_{10} - \alpha W_{01} )  
&&
 [W_{-1-1}, W_{1-1}] =  \sqrt{2}   \beta^* W_{0-1} 
\\
&
 [W_{01}, W_{11}]=  \sqrt{2} \beta W_{11}  
 &&
[W_{-1-1} , W_{10}]=  \sqrt{2}   \alpha W_{-1-1}  + \sqrt{2} \beta^* (K_1 +K_2 )
\\
&
[W_{1-1} ,W_{10}]=     \sqrt{2}   \alpha W_{1-1} 
 \end{align*}

 \begin{align*}
  & \,
&&
[W_{-1-1} ,W_{11}]= -2x (K_1 \!+K_2 ) -\! \sqrt{2} (  \alpha W_{-10}  \!+ \!  \beta W_{0-1} )
\\
&
[W_{1-1}, W_{11}]=  - \sqrt{2}   \alpha W_{10} 
&&
[W_{-10}, W_{01}]=   \sqrt{2} (\beta W_{-10} + \alpha^* W_{01} )\!\!
\\
&
[W_{10}, W_{11}]=   \sqrt{2}   \alpha W_{11} 
&&
[W_{-10}, W_{1-1} ]=  -  \sqrt{2}   \alpha^* W_{1-1}  +  \sqrt{2} \beta^* (- K_1 +K_2 )
\\
  &
[W_{10}, W_{-10}]= \sqrt{2} (\beta^* W_{01} \!+\! \beta W_{0-1} )
&&
[W_{0-1}, W_{01}]=  \sqrt{2} ( \alpha^*   W_{10} + \alpha W_{-1 0} )
\\
&
[W_{-11},W_{01}]=  \sqrt{2} \beta W_{-11}
&&
[W_{-11}, W_{1-1}] = 2x ( K_1 \!-\! K_2 ) - \sqrt{2}   (\beta^* W_{01}  -   \alpha W_{-10}) 
 \end{align*}

The remaining brackets are obtained  using the $*$-structure:
$$
[K_j, W_{-\mathsf{r}}] 
= [K_j^*, W_\mathsf{r}^*] 
= - ([K_j, W_\mathsf{r}])^* \; , 
\qquad
[W_{- \mathsf{r}} , W_{-\mathsf{s}}] 
= [W_\mathsf{r}^*, W_\mathsf{s}^*] 
=-([W_\mathsf{r} , W_\mathsf{s}])^* \, ,
$$
with $K_j^*=K_j$ and $W^*_\mathsf{r}=  W_{-\mathsf{r}}$,  see page \pageref{UWT2}, and
  $(bX)^*= b^* X^*$ for each $b \in \O(S^4)$ and $X$ a derivation. For example, one computes
\begin{align*}
& [K_2, W_{0-1}]
 = -( [K_2, W_{01}])^*
 = -  \sqrt{2} ( \alpha^*  W_{1-1} + \alpha W_{-1-1} )
 \\
 &
 [W_{10} , W_{0-1}]
 = - ([W_{-10}, W_{01}])^*
 = -  \sqrt{2} (\beta^* W_{10} + \alpha W_{0-1} )
 \; .
 \end{align*}
\\[2em]
\noindent
\textbf{Acknowledgments.}~\\[.5em]
This research has a financial support from Universit\`a del Piemonte Orientale. 
PA acknowledges  partial support from INFN, CSN4, Iniziativa
Specifica GSS and from INdAM-GNFM. PA acknowledges hospitality
  from Universit\`a di Trieste.
GL acknowledges partial support from INFN, Iniziativa Specifica GAST
and INFN Torino DGR4.
CP acknowledges support from Universit\`a di Trieste (assegni
Dipartimenti di Eccellenza,  legge n. 232 del 2016).  
GL and CP acknowledge partial support  from the ``National Group for Algebraic and Geometric Structures, and their
Applications'' (GNSAGA--INdAM) and hospitality from Universit\`a del  Piemonte
Orientale and INFN Torino.

\end{document}